\newtheorem{thm}{Theorem}[section]
\newtheorem{lem}[thm]{Lemma}
\newtheorem{prop}[thm]{Proposition}
\newtheorem{cor}[thm]{Corollary}
\newtheorem{rmk}[thm]{Remark}
\DeclareMathOperator{\loc}{loc}
\DeclareMathOperator{\dist}{dist}
\DeclareMathOperator{\per}{per}
\DeclareMathOperator{\cri}{cr}
\DeclareMathOperator{\supp}{supp}
\newcommand{\RR}{\mathbb{R}}     
\newcommand{\NN}{\mathbb{N}}     
\newcommand{\J}{\mathcal{J}}  
\newcommand{\K}{\mathcal{K}}
\newcommand{\R}{\mathcal{R}}
\newcommand{\LN}{\mathcal{L}^N}
\newcommand{\e}{\varepsilon} 
\newcommand{\ga}{\gamma}
\newcommand{\la}{\lambda}  
\numberwithin{equation}{section} 
\begin{document}

\nocite{}

\title{On the existence of non-flat profiles for a Bernoulli free boundary problem}

\author{
    Giovanni Gravina\\
    Department of Mathematical Analysis\\
    Faculty of Mathematics and Physics \\
    Charles University\\
    Prague, Czech Republic\\
    gravina@karlin.mff.cuni.cz
  \and
    Giovanni Leoni\\
    Department of Mathematical Sciences\\
    Carnegie Mellon University\\
    Pittsburgh, PA, USA\\
    giovanni@andrew.cmu.edu
    }
\date{\today}

\maketitle

\begin{abstract}
In this paper we consider a large class of Bernoulli-type free boundary problems with mixed periodic-Dirichlet boundary conditions. We show that solutions with non-flat profile can be found variationally as global minimizers of the classical Alt-Caffarelli energy functional. 
\vspace{.3cm}
\newline
\textbf{Key words.} Free boundary problems; one-phase; Bernoulli-type.
\vspace{.2cm}
\newline
\textbf{AMS subject classification.} 35R35.
\end{abstract}

\section{Introduction}
In the classical paper \cite{MR618549}, Alt and Caffarelli used a variational approach to study the existence and regularity of solutions to the one-phase free boundary problem
\begin{equation}
\label{freebdrypb}
\left\{
\arraycolsep=1.4pt\def\arraystretch{1.6}
\begin{array}{rll}
\Delta u = & 0 & \text{ in } \Omega \cap \{u > 0\}, \\
u = & 0 & \text{ on } \Omega \cap \partial\{u > 0\}, \\
|\nabla u| = & Q & \text{ on } \Omega \cap \partial\{u > 0\}, \\
u = & u_0 & \text{ on } \Gamma.
\end{array}
\right.
\end{equation}
Here $\Omega$ is an open connected subset of $\RR^N$ with (locally) Lipschitz continuous boundary and $Q$ is a nonnegative measurable function. Solutions to (\ref{freebdrypb}) are critical points for the functional
\begin{equation}
\label{J}
\J(u) \coloneqq \int_{\Omega}\big(|\nabla u|^2 + \chi_{\{u > 0\}}Q^2\big)\,d\bm{x}, \quad u \in \K,
\end{equation}
where 
\begin{equation}
\label{K}
\K \coloneqq \{u \in L^1_{\loc}(\Omega) : \nabla u \in L^2(\Omega;\RR^N) \text{ and } u = u_0 \text{ on } \Gamma\},
\end{equation}
with $\Gamma \subset \partial \Omega$ a measurable set with $\mathcal{H}^{N-1}(\Gamma) > 0$ and $u_0 \in H^1_{\loc}(\Omega)$ a nonnegative function satisfying
\begin{equation}
\label{ju0finite}
\J(u_0) < \infty.
\end{equation}
The equality $u = u_0$ on $\Gamma$ is in the sense of traces. 

Under the assumption that $Q$ is a H\"older continuous function satisfying
\begin{equation}
\label{Qmin>0}
0 < Q_{\min} \le Q(\bm{x}) \le Q_{\max} < \infty,
\end{equation}
Alt and Caffarelli \cite{MR618549} proved local Lipschitz regularity of local minima and showed that the free boundary $\partial \{u > 0\}$ is a $C^{1,\alpha}$ regular curve locally in $\Omega$ if $N = 2$, while if $N \ge 3$ they proved that the reduced free boundary $\partial^{\operatorname{red}}\{ u > 0\}$ is a hypersurface of class $C^{1,\alpha}$ locally in $\Omega$, for some $0 < \alpha < 1$, and that the singular set
\[
\Sigma_{\operatorname{sing}} \coloneqq \Omega \cap \left\{\partial \{ u > 0\}  \setminus \partial^{\operatorname{red}}\{ u > 0\}\right\}
\] 
has zero $\mathcal{H}^{N - 1}$ measure. See also \cite{MR752578} for the quasi-linear case and \cite{MR2133664} for the case of the $p$-Laplace operator.

While the regularity of minimizers is optimal, the regularity of the free boundary for $N \ge 3$ was improved by Weiss in \cite{MR1759450}. Weiss, following an approach closely related to the theory of minimal surfaces and by means of a monotonicity formula, proved the existence of a maximal dimension $k^* \ge 3$ such that for $N < k^*$ the free boundary is a hypersurface of class $C^{1,\alpha}$ locally in $\Omega$, for $N = k^*$ the singular set $\Sigma_{\operatorname{sing}}$ consists at most of isolated points, and if $N > k^*$ then $\mathcal{H}^s(\Sigma_{\operatorname{sing}}) = 0$ for every $s > N - k^*$. In \cite{MR2082392}, Caffarelli, Jerison and Kenig proved the full regularity of the free boundary in dimension $N = 3$, thus showing that $k^* \ge 4$. They also conjectured that $k^* \ge 7$. In a later work, De Silva and Jerison exhibited an example of a global energy minimizer with non-smooth free boundary in dimension 7 (see \cite{MR2572253}); their result gives the upper bound $k^* \le 7$. More recently, Jerison and Savin showed that the only stable homogeneous solutions in dimension $N \le 4$ are hyperplanes, a result which implies full regularity of the free boundary for $N \le 4$, and consequently that $k^* \in \{5,6,7\}$ (see \cite{MR3385632}). We refer to the recent paper of Edelen and Engelstein (see \cite{MR3894044}) for more details on the structure of the singular set $\Sigma_{\operatorname{sing}}$.

As already remarked in \cite{MR618549}, if $N  = 3$ the energy functional $\J$ admits a critical point with a point singularity in the free boundary. Similar results have been obtained for two-phase free boundary problems (see \cite{MR732100}, \cite{caff1}, \cite{caff2}, \cite{caff3}). 

It is important to observe that the regularity of the free boundary is strongly related to the assumption $0 < Q_{\min} \le Q(\bm{x})$ in (\ref{Qmin>0}). Indeed, in the recent paper \cite{MR2915865} Arama and the second author showed that for $N = 2$ and in the special case in which 
\begin{equation}
\label{Qh}
Q(x,y) = \sqrt{(h - y)_+} \quad \text{ for some } h > 0,
\end{equation}
if a local minimizer $u$ has support below the line $\{y = h\}$ and if there exists a point $\bm{x}_0 = (x_0,h) \in \partial \{u > 0\} \cap \Omega$, then 
\begin{equation}
\label{Qdecay1/2}
|\nabla u(x,y)| \le C(h - y)^{1/2} \quad \text{ for } \bm{x} \in B_r(\bm{x}_0),
\end{equation}
provided $r$ is sufficiently small (see Remark 3.5 in \cite{MR2915865}), and, if in addition $u$ coincides with its symmetric decreasing rearrangement with respect to the variable $x$,
\[
u(0,y) \ge c(h - y)^{3/2} \quad \text{ for } y \in [0,h]
\] 
(see Theorem 5.11 in \cite{MR2915865}). On the other hand, using a monotonicity formula and a blow-up method, Varvaruca and Weiss (see Theorem A in \cite{MR2810856}) proved that for a suitable definition of solution if the constant $C$ in (\ref{Qdecay1/2}) is one then the rescaled function
\begin{equation}
\label{bund}
\frac{u(\bm{x}_0 + r\bm{x})}{r^{3/2}} \to \frac{\sqrt{2}}{3}\rho^{3/2}\cos\left(\frac{3}{2}\left(\min\left\{\max\left\{\theta,-\frac{5\pi}{6}\right\},-\frac{\pi}{6}\right\} + \frac{\pi}{2}\right)\right) \quad \text{ as } r \to 0^+,
\end{equation}
strongly in $W^{1,2}_{\loc}(\RR^2)$ and locally uniformly on $\RR^2$, where $(x,y) = (\rho \cos \theta, \rho \sin \theta)$, and near $\bm{x}_0$ the free boundary $\partial \{u > 0\}$ is the union of two $C^1$ graphs with right and left tangents at $\bm{x}_0$ forming an angle of $2\pi/3$ (see also \cite{MR2928132}). This type of singular solutions are related to Stokes' conjecture on the existence of extreme water waves (see \cite{stokes}). Indeed, when $N = 2$, $Q$ takes the form $(\ref{Qh})$,
\begin{equation}
\label{defww}
\Omega \coloneqq (-\la/2,\la/2) \times (0, \infty),\ \quad \Gamma \coloneqq (-\la/2,\la/2) \times \{0\},\ \quad u_0 \equiv m,
\end{equation}
the free boundary problem (\ref{freebdrypb}) describes gravity waves of permanent form on the free surface of an ideal fluid. The motion is assumed to be irrotational and two dimensional (see \cite{MR0112435}).

The existence of extreme waves and the corner singularity have been proved in a series of papers (see \cite{MR869412}, \cite{MR666110}, \cite{MR1446239}, \cite{MR1883094}, \cite{MR513927}; see also \cite{MR2604871}, \cite{MR0502787}, \cite{MR869413}, \cite{MR2038344} for the existence of regular waves) using a hodograph transformation to map the set $\{u > 0\}$ onto an annulus. 

The main drawback in proving the existence of regular and extreme water waves using the variational setting of $(\ref{J})$ is that \emph{global minimizers of the energy functional $\J$ specialized to the case $(\ref{Qh})$, $(\ref{defww})$ are one dimensional functions of the form $u = u(y)$}, which correspond to flat profiles (see Theorem 5.1 in \cite{MR2915865}). For this reason the paper \cite{MR2915865} gives interesting results only for local minimizers or when the Dirichlet boundary datum $u_0$ is not constant on the bottom. Necessary and sufficient minimality conditions in terms of the second variation of $\J$ have been derived by Fonseca, Mora and the second author in \cite{FLM}. We refer to the papers \cite{MR2027299}, \cite{MR2264220}, \cite{MR3689941}, \cite{MR3494328}, \cite{CHEN2018517}, \cite{MR2278405}, \cite{MR3811234}, \cite{MR3177719} and the references therein for alternative approaches to water waves.

The purpose of this paper is to show that by adding an additional Dirichlet boundary condition on part of the later boundary it is possible to construct global minimizers of $\J$ in the setting (\ref{Qh}), (\ref{defww}), which are not one dimensional. To be precise, we let $\Omega$ be the half-infinite rectangular parallelepiped 
\begin{equation}
\label{strip}
\Omega \coloneqq \R \times (0,\infty),
\end{equation}
where $\R$ is the open cube of $\RR^{N-1}$ with center at the origin and side-length $\la > 0$, that is,
\[
\R \coloneqq \left(-\frac{\la}{2} , \frac{\la}{2} \right)^{N-1}.
\]
We will impose periodic boundary conditions on the lateral portion of the boundary, therefore we will require that the class of admissible functions is a subset of the Sobolev space
\begin{multline}
\label{H1Rloc}
H^1_{\per}(\Omega) \coloneqq \{u \in H^1_{\loc}(\RR^N_+) : u(\bm{x} + \la \bm{e}_i) = u(\bm{x}) \text{ for } \LN \text{-a.e.\@ } \bm{x} \in \RR^N_+ \\ \text{ and every } i = 1,\dots,N-1\}.
\end{multline}
With the choice 
\begin{equation}
\label{Qb}
Q(\bm{x}) \coloneqq (h - x_N)_+^b,
\end{equation}
where $b,h> 0$, the functional $\J$ in (\ref{J}) can be rewritten as
\begin{equation}
\label{Jper}
\J_h(u) \coloneqq \int_{\Omega}\big(|\nabla u|^2 + \chi_{\{u > 0\}}(h - x_N)_+^{2b}\big)\,d\bm{x}, \quad \text{ for } u \in \K_{\ga},
\end{equation}
where
\begin{equation}
\label{Kg}
\K_{\ga} \coloneqq \{u \in H^1_{\per}(\Omega) : u = u_0 \text{ on } \Gamma_{\ga}\},\ \quad \ga > 0.
\end{equation}
Here the Dirichlet datum $u_0$, defined by
\begin{equation}
\label{u_0gam}
u_0(\bm{x}) \coloneqq m\left(1 - \frac{x_N}{\ga}\right)_+,\ \quad m > 0,
\end{equation}
is prescribed on 
\begin{equation}
\label{gammaga}
\Gamma_{\ga} \coloneqq (\R \times \{0\}) \cup (\partial \R \times (\ga, \infty)).
\end{equation}
In particular, notice that $u_0$ is constant on $\R \times \{0\}$ and zero on $\partial \R \times (\ga,\infty)$.

One of our main results consists of proving that there are choices of the parameter $\ga$ (depending on $b,m$, and $h$, but independent of $\la$) which have the effect of eliminating trivial solutions from the domain of $\J_h$. This is made precise in the following theorem.

         \begin{center}
         \begin{tikzpicture}[blend group=normal, scale=0.7]
         
         \draw [->] (5,-0.5) -- (5,10.5); 
         \draw [->] (-0.5,1) -- (10.5,1); 
         \draw [dashed] (1,0) -- (1,3); 
         \draw (1,3) -- (1,10); 
         \draw [dashed] (9,0) -- (9,3); 
         \draw (9,3) -- (9,10); 
         
         
         \draw [thin] (1,2.6) to [out=0,in=180] (5,6) to [out=0,in=180] (9,2.6);
         \draw [dashed] (0,2.95) to [out=-35,in=180] (1,2.6);
         \draw [dashed] (9,2.6) to [out=0,in=215] (10,2.95);
         \draw [thin, fill=black, fill opacity=0.2, draw opacity=0] (1,1) -- (1,2.6) to [out=0,in=180] (5,6) to [out=0,in=180] (9,2.6) -- (9,1);
         
         \draw [dashed] (0.5,3) -- (9.5,3);
         \draw [dashed] (0.5,9.5) -- (9.5,9.5);
         
         \node [below right] at (1,1) {$-\la/2$};
         \node [below right] at (9,1) {$\la/2$};
         \node [below right] at (5,1) {$u \equiv m$};
         \node [below left] at (5,3) {$\ga$};
         \node [below left] at (5,9.5) {$h$};
         \node [left] at (1,6) {$u \equiv 0$};
         \node [right] at (9,6) {$u \equiv 0$};
         \end{tikzpicture}
         \end{center}
         
\begin{thm}[Existence of non-flat minimizers]
\label{exofgamma}
Given $b, m, h, \la > 0$, let $\Omega$, $\J_h$, and $\K_{\ga}$ be defined as in $(\ref{strip})$, $(\ref{Jper})$, and $(\ref{Kg})$, respectively. Let 
\begin{equation}
\label{h*}
h^\# \coloneqq \frac{b + 1}{b^{b/(b + 1)}} m^{1/(b + 1)}, \quad h^* \coloneqq \frac{2b + 2}{(2b+1)^{b/(b + 1)}} m^{1/(b + 1)},
\end{equation}
and, for $h > h^{\#}$, let $t_h$ be the first positive root of the polynomial 
\[
p(t) \coloneqq t^2(h - t)^{2b} - m^2.
\]
Furthermore, for $h \in (h^{\#},h^*)$, let $\tau_h > t_h$ be the unique value such that
\[
\frac{m^2}{t_h} + \frac{h^{2b + 1} - (h - t_h)^{2b + 1}}{2b + 1} = \frac{m^2}{\tau_h} + \frac{h^{2b + 1} - (h - \min\{h,\tau_h\})^{2b + 1}}{2b + 1},
\]
and set $\tau_h = t_h = h/(b + 1)$ if $h = h^{\#}$. Then every global minimizer $u \in \K_{\ga}$ of the functional $\J_h$ is not of the form $u = u(x_N)$ provided
\begin{equation}
\label{gammarange}
\left\{
\arraycolsep=1.4pt\def\arraystretch{1.6}
\begin{array}{rll}
\ga \in & (0,\infty) & \text{ if } h < h^{\#}, \\
\ga \in & (0,t_h) \cup (\tau_h,\infty) & \text{ if } h^{\#} \le h < h^*, \\
\ga \in & (0,t_h) & \text{ if } h \ge h^*.
\end{array}
\right.
\end{equation}
\end{thm}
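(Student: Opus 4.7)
Plan. My plan is to prove the theorem in two steps: first, identify the infimum of $\J_h$ over one-dimensional admissible profiles $v = v(x_N)$ and show it equals $\J_h(u_0)$ throughout (\ref{gammarange}); second, produce an explicit non-flat competitor with strictly smaller energy.

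For the one-dimensional analysis, suppose $v \in \K_{\gamma}$ depends only on $x_N$. Replacing $v$ by $v_+$ I may assume $v \ge 0$, and admissibility forces $v(0) = m$ together with $v \equiv 0$ on $[\gamma, \infty)$. Letting $T_{\ast} \in (0, \gamma]$ denote the first zero of $v$, Cauchy--Schwarz gives $\int_0^{T_{\ast}} (v')^2 \ge m^2 / T_{\ast}$; the bulk term over $(0, T_{\ast})$ equals $G(T_{\ast}) := \int_0^{\min\{T_{\ast}, h\}} (h - s)^{2b}\,ds$; and any further positive lobes of $v$ in $(T_{\ast}, \gamma)$ contribute only non-negative energy. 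Writing $g(T) := m^2/T + G(T)$, so that
\[
g'(T) = -\frac{m^2}{T^2} + (h - T)_+^{2b},
\]
this yields $\J_h(v) \ge \la^{N-1} g(T_{\ast})$. A case analysis of the sign of $g'$, using the definitions of $h^{\#}$, $h^*$, $t_h$ and $\tau_h$, then shows that on each sub-range of $\gamma$ listed in (\ref{gammarange}) one has $\min_{T \in (0, \gamma]} g(T) = g(\gamma)$ together with $g'(\gamma) < 0$ (equivalently, $\gamma (h - \gamma)_+^b < m$). Therefore $\J_h(v) \ge \la^{N-1} g(\gamma) = \J_h(u_0)$ for every one-dimensional admissible $v$.

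For the competitor, pick any $s \in C_c^{\infty}(\R)$ with $s \ge 0$ and $s \not\equiv 0$, extend by zero and then $\la$-periodically to $\RR^{N-1}$, and for small $\e > 0$ set $T_{\e}(x') := \gamma + \e s(x')$ together with
\[
w_{\e}(\bm{x}) := m\left(1 - \frac{x_N}{T_{\e}(x')}\right)_+.
\]
Then $w_{\e} \in \K_{\gamma}$: it is periodic in $x'$, equals $m$ on $\R \times \{0\}$, and since $T_{\e} \equiv \gamma$ on $\partial \R$ it vanishes on $\partial \R \times (\gamma, \infty)$. Computing $|\nabla w_{\e}|^2 = m^2/T_{\e}^2 + m^2 x_N^2 |\nabla T_{\e}|^2/T_{\e}^4$ on $\{x_N < T_{\e}\}$ and integrating in $x_N$ (using $\int_0^{T_{\e}} x_N^2\,dx_N = T_{\e}^3/3$) produces
\[
\J_h(w_{\e}) = \int_{\R} \left[g(T_{\e}(x')) + \frac{m^2 |\nabla T_{\e}(x')|^2}{3\, T_{\e}(x')}\right] dx' = \la^{N-1} g(\gamma) + \e\, g'(\gamma) \int_{\R} s\, dx' + O(\e^2).
\]
Since $g'(\gamma) < 0$ and $\int_{\R} s\, dx' > 0$, the linear term is strictly negative, so $\J_h(w_{\e}) < \J_h(u_0)$ for all sufficiently small $\e$. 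Combined with the first step, no one-dimensional function can minimize $\J_h$ over $\K_{\gamma}$.

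The main technical obstacle is the case analysis in the first step: one must verify that (\ref{gammarange}) is precisely the $\gamma$-set on which $\min_{T \in (0,\gamma]} g(T) = g(\gamma)$, as opposed to the interior critical value $g(t_h)$. The excluded interval $[t_h, \tau_h]$ of the middle regime is exactly the locus where $T = t_h$ wins; since $t_h$ is by definition a root of $p(t) = t^2(h - t)^{2b} - m^2$, the corresponding profile $m(1 - x_N/t_h)_+$ automatically satisfies the Bernoulli identity $|u'| = (h - x_N)^b$ at its free boundary, hence is a genuine one-phase free-boundary solution that the simple deformation $T_{\e} = \gamma + \e s$ cannot defeat. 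By contrast, the explicit computation and Taylor expansion of $\J_h(w_{\e})$ in the second step are routine.
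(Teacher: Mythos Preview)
Your argument is correct and follows essentially the same strategy as the paper: reduce the one-dimensional problem to minimizing $g(T)=m^2/T+\int_0^{\min\{T,h\}}(h-s)^{2b}\,ds$ over $T\in(0,\gamma]$, verify that on the range \eqref{gammarange} the minimum is $g(\gamma)$ with $g'(\gamma)<0$, and then beat $u_0$ with a competitor of the form $m(1-x_N/T(x'))_+$ where $T(x')$ is a small nonnegative perturbation of $\gamma$. The only cosmetic difference is that the paper takes $T$ to be the piecewise-linear pyramid over $\R$ with apex height $\gamma+\delta$ (and computes the energy by hand on a fundamental sector), whereas you take $T=\gamma+\e s$ with $s\in C_c^\infty(\R)$ and read off $\J_h(w_\e)=\int_{\R}g(T_\e)\,dx'+O(\e^2)$ directly; both expansions produce the same decisive first-order term $g'(\gamma)$.
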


\begin{rmk}
The numbers $h^{\#}, h^*, t_h$, and $\tau_h$ arise naturally from the study of the minimization problem for a one-dimensional version of $\J_h$. The analysis of this auxiliary problem is presented in \Cref{auxsec}. In particular, in \Cref{gaequiv} we give an equivalent characterization of the different ranges in $(\ref{gammarange})$.
\end{rmk}

We then proceed to study qualitative properties of global minimizers as we vary the parameter $h$. One of the main results in this direction is an analogue to Theorem 5.6 in \cite{MR2915865}, which roughly speaking gives a characterization of the values of $h$ for which the support of global minimizers stays bounded. The key ingredients of our proof are the monotonicity techniques developed in Section 5 of \cite{MR647374}, Theorem 10.1 in \cite{MR1009785}, and ideas borrowed from the proof of the continuous fit as presented in Section 9 of \cite{MR733897}. 

\begin{thm}[Existence of a critical height]
\label{exofhcrit}
Given $b, m, \la > 0$, let $\theta \colon \RR_+ \to \RR_+$ be a non-increasing function, set
\begin{equation}
\label{choiceg}
\gamma_h \coloneqq \theta(h),
\end{equation}
and for every $h > 0$ consider $\Omega$, $\J_h$, and $\K_{\ga_h}$ defined as in $(\ref{strip})$, $(\ref{Jper})$, and $(\ref{Kg})$, respectively. Then there exists a threshold value for the parameter $h$, denoted by $h_{\cri}$, with the following properties:
\begin{itemize}
\item[$(i)$] $0 < h_{\cri} < \infty$;
\item[$(ii)$] for every $h > h_{\cri}$ and for every global minimizer $u \in \K_{\ga_h}$ of $\J_h$ the support of $u$ stays strictly below the hyperplane $\{x_N = h\}$;
\item[$(iii)$] for every $0 < h < h_{\cri}$ and for every global minimizer $u \in \K_{\ga_h}$ of $\J_h$ the support of $u$ crosses the hyperplane $\{x_N = h\}$ and therefore $u$ is positive in $\R \times (h,\infty)$.
\end{itemize}
\end{thm}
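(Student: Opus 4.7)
The plan is to define
\[
\mathcal{H} := \{h > 0 : \text{every global minimizer } u \text{ of } \J_h \text{ in } \K_{\ga_h} \text{ satisfies } \supp u \subset \R \times [0,h]\},
\]
set $h_{\cri} := \inf \mathcal{H}$, and prove that $\mathcal{H} = (h_{\cri}, \infty)$ with $0 < h_{\cri} < \infty$, and that for $h < h_{\cri}$ every global minimizer is strictly positive throughout $\R \times (h, \infty)$. Existence of global minimizers at each fixed $h$ comes from the direct method, with coercivity provided by the Dirichlet datum and lower semicontinuity of the free-boundary penalty handled as in \cite{MR618549}. The regularity theory from that paper shows minimizers are locally Lipschitz, so $\{u > 0\}$ is open and the restriction of $u$ to $\R \times (h, \infty)$ is a nonnegative harmonic function (since $(h-x_N)_+ = 0$ there).

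The core step is the monotonicity claim: \emph{if $h_1 \in \mathcal{H}$ and $h_2 > h_1$, then $h_2 \in \mathcal{H}$}. Arguing by contradiction, assume some global minimizer $u_2 \in \K_{\ga_{h_2}}$ has positivity set intersecting $\R \times (h_2, \infty)$, and let $u_1$ be a global minimizer at $h_1$, so that $\supp u_1 \subset \R \times [0, h_1]$. Since $\theta$ is non-increasing, $\ga_{h_2} \leq \ga_{h_1}$, so $\Gamma_{\ga_{h_2}} \supset \Gamma_{\ga_{h_1}}$ and $u_0 \mid_{h_2} \leq u_0 \mid_{h_1}$ pointwise; a localized correction of $u_1$ near $\partial \R \times (\ga_{h_2}, \ga_{h_1})$ then yields an admissible competitor $v \in \K_{\ga_{h_2}}$ with $\supp v \subset \R \times [0, h_1]$. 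The pointwise inequality $(h_1 - x_N)_+^{2b} \leq (h_2 - x_N)_+^{2b}$ together with the restricted support of $v$ gives
\[
\J_{h_2}(v) \leq \J_{h_1}(u_1) + E_{\text{bdy}},
\]
where $E_{\text{bdy}}$ is a controlled boundary-correction term. On the other hand, applying the Caffarelli-Friedman monotonicity formula (Theorem 10.1 of \cite{MR1009785}) at a free-boundary point where $\partial \{u_2 > 0\}$ meets $\{x_N = h_2\}$, in combination with the monotonicity technique from Section 5 of \cite{MR647374}, produces a strictly positive lower bound for the Dirichlet energy of $u_2$ on $\R \times (h_2, h_2+\delta)$, which dominates $E_{\text{bdy}}$. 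Combining this with minimality of $u_1$ yields $\J_{h_2}(v) < \J_{h_2}(u_2)$, the desired contradiction.

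To finish, I would address the two endpoints and the positivity claim in (iii). For $h_{\cri} < \infty$, pick $h$ so large that the polynomial $p(t) = t^2(h-t)^{2b} - m^2$ has a small first positive root $t_h$; the one-dimensional minimizer from \Cref{auxsec} is then supported in $[0, t_h] \subset [0, h]$, and its periodic lift serves as a competitor which, by the continuous-fit mechanism adapted from Section 9 of \cite{MR733897}, strictly beats any candidate whose support crosses $\{x_N = h\}$. For $h_{\cri} > 0$, fix $h$ small and compare any $v$ with $\supp v \subset \R \times [0,h]$ against the harmonic extension $\bar u$ of the Dirichlet datum above $\{x_N = h\}$: the penalty $\chi_{\{u>0\}}(h - x_N)_+^{2b}$ contributes at most $O(h^{2b+1}\la^{N-1})$, while $\bar u$ saves an order-one amount of Dirichlet energy by not being compressed into $\R \times [0,h]$, so $\J_h(\bar u) < \J_h(v)$ for $h$ small enough. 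Finally, for $h < h_{\cri}$, the restriction of any global minimizer $u$ to the open connected set $\R \times (h, \infty)$ is a nonnegative harmonic function that is not identically zero (by failure of the support-confinement property defining $\mathcal{H}$); the strong maximum principle, together with the periodic identification in the transverse variables, then propagates positivity to all of $\R \times (h, \infty)$.

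The main obstacle is unquestionably the monotonicity step. Both the energy $\J_h$ and the admissible class $\K_{\ga_h}$ depend on $h$, so direct monotonicity of the minimum value is not automatic, and producing an admissible competitor requires the monotonicity of $\theta$ to align the two Dirichlet classes. Making the Caffarelli-Friedman lower bound quantitatively strong enough to dominate the boundary-correction term $E_{\text{bdy}}$, while simultaneously being robust to the $h$-dependent geometry near $\{x_N = h\}$, is the delicate technical point on which the entire argument hinges.
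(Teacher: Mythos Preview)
Your monotonicity step is the heart of the matter, and the route you sketch is both different from the paper's and, as stated, does not go through. You invoke the Caffarelli--Friedman monotonicity formula at a free boundary point where $\partial\{u_2>0\}$ meets $\{x_N=h_2\}$, but under your contradiction hypothesis no such point exists: once the positivity set of $u_2$ meets $\R\times(h_2,\infty)$, where the penalty vanishes, $u_2$ is a nonnegative harmonic function there, and the strong maximum principle (with the periodic identification) forces $u_2>0$ on all of $\R\times(h_2,\infty)$. So the free boundary of $u_2$ lies entirely in $\{x_N\le h_2\}$ and need not touch $\{x_N=h_2\}$ at all. More fundamentally, the Caffarelli--Friedman formula controls a product of averaged Dirichlet integrals of two disjointly-supported subharmonic functions; it does not by itself produce a quantitative lower bound on $\int_{\R\times(h_2,h_2+\delta)}|\nabla u_2|^2$ that could be played against your boundary-correction term $E_{\text{bdy}}$. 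The ``monotonicity'' referenced from \cite{MR1009785} is not the CF formula but a comparison of minimizers via truncation.

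The paper's argument for this step is both simpler and sharper. Given $d<h$ and minimizers $u_d\in\K_{\ga_d}$, $u_h\in\K_{\ga_h}$, set $v_1\coloneqq\min\{u_d,u_h\}$ and $v_2\coloneqq\max\{u_d,u_h\}$. Since $\theta$ is non-increasing, $v_1\in\K_{\ga_h}$ and $v_2\in\K_{\ga_d}$, so $\J_d(u_d)+\J_h(u_h)\le\J_d(v_2)+\J_h(v_1)$. The Dirichlet parts cancel exactly, leaving
\[
\int_{\{u_h>u_d\}}\bigl(\chi_{\{u_h>0\}}-\chi_{\{u_d>0\}}\bigr)\bigl((h-x_N)_+^{2b}-(d-x_N)_+^{2b}\bigr)\,d\bm{x}\le 0,
\]
whose integrand is nonnegative. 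This forces $\{u_h>0\}\subset\{u_d>0\}$ below $\{x_N=h\}$, and then $u_h\le u_d$ everywhere by a connectedness/maximum-principle argument. No boundary-correction competitor, no CF formula, and the monotonicity of $\theta$ enters exactly once, to place $v_1,v_2$ in the right classes. With this in hand, properties $(ii)$ and $(iii)$ are immediate. The endpoints are also handled differently: $h_{\cri}<\infty$ follows directly from the non-degeneracy estimate (\Cref{competition2}), since $u_h\le m$ makes the averaged quantity small once $(h-\bar x_N)^b$ is large; and $h_{\cri}>0$ uses the lower bound $\J_h(u_h)>\la^{N-1}m^2/h$ (Jensen on $\partial_{x_N}u$) against the explicit pyramid competitor from the proof of \Cref{exofgamma}.
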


\begin{rmk}
Although \Cref{exofhcrit} holds for any choice of the non-increasing function $\theta$, it is of particular interest in the case in which for every $h > 0$ the value $\ga_h = \theta(h)$ satisfies $(\ref{gammarange})$.
\end{rmk}

Next, we give bounds on the critical height $h_{\cri}$ in terms of the Dirichlet datum $m$ and obtain in return a characterization of its asymptotic behavior.

\begin{thm}[Scaling of the critical height]
\label{scalingthm}
Under the assumptions of \Cref{exofhcrit}, if in addition $\ga_{t_{h^*}} \ge t_{h^*}$, we have
\[
h_{\cri} \sim m^{1/(b + 1)}.
\]
Here $t_{h^*}$ and $\ga_{t_{h^*}}$ are the numbers given in \Cref{exofgamma} and in $(\ref{choiceg})$ corresponding to $h = h^*$, where $h^*$ is defined in $(\ref{h*})$.
\end{thm}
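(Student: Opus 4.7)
The plan is to establish matching bounds $c_1(b) m^{1/(b+1)} \le h_{\cri} \le c_2(b) m^{1/(b+1)}$ by comparing global minimizers with explicit one-dimensional flat profiles and invoking the one-dimensional analysis of \Cref{auxsec}. A direct calculation from $(\ref{h*})$ gives $t_{h^*} = h^*/(2(b+1))$, and comparison of the constants in $(\ref{h*})$ shows $t_{h^*} < h^\#$; in particular $h^\#$, $h^*$, and $t_{h^*}$ are all positive $b$-dependent multiples of $m^{1/(b+1)}$, so matching $c_i \cdot m^{1/(b+1)}$ bounds in terms of these quantities will yield the conclusion.

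For the lower bound, fix $h \in (0, t_{h^*})$, so that $h < h^\#$. By the monotonicity of $\theta$ and the hypothesis $\gamma_{t_{h^*}} \ge t_{h^*}$, one has $\gamma_h \ge t_{h^*} > h$, so the ``crossing'' flat profile $v(x_N) \coloneqq m(1 - x_N/\gamma_h)_+$ lies in $\K_{\gamma_h}$ and satisfies
\[
\J_h(v) = |\R|\left(\frac{m^2}{\gamma_h} + \frac{h^{2b+1}}{2b+1}\right).
\]
A Fubini slicing argument in $x_N$, combined with the one-dimensional analysis of \Cref{auxsec}---which for $h < h^\#$ yields that the minimum of $\int_0^h((w')^2 + \chi_{\{w > 0\}}(h - y)^{2b})\,dy$ over $w \in H^1([0,h])$ with $w(0) = m$ is attained by the linear profile with support $[0,h]$ and equals $m^2/h + h^{2b+1}/(2b+1)$---gives
\[
\J_h(u) \ge |\R|\left(\frac{m^2}{h} + \frac{h^{2b+1}}{2b+1}\right) > \J_h(v)
\]
for every $u \in \K_{\gamma_h}$ with $\supp u \subseteq \R \times [0,h]$. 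Hence no global minimizer can stay below $\{x_N = h\}$, and by \Cref{exofhcrit}$(iii)$ the support of every global minimizer crosses $\{x_N = h\}$. This proves $h_{\cri} \ge t_{h^*}$.

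For the upper bound, the dual slicing comparison applies. For $h > h^*$, substitution of $t_h^2(h - t_h)^{2b} = m^2$ into the expression for $E_1(h) \coloneqq m^2/t_h + (h^{2b+1} - (h - t_h)^{2b+1})/(2b+1)$ gives $E_1(h) < h^{2b+1}/(2b+1)$, while $h^{2b+1}/(2b+1)$ is the infimum of the one-dimensional energy over profiles whose support crosses $\{x_N = h\}$. Provided the small-support flat profile $u_f(x_N) \coloneqq m(1 - x_N/t_h)_+$ is admissible (i.e., $t_h \le \gamma_h$), slicing then gives $\J_h(u) > |\R| E_1(h) = \J_h(u_f)$ for any minimizer $u$ with a positive-measure set of slices crossing $\{x_N = h\}$, a contradiction with minimality. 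The hypothesis $\gamma_{t_{h^*}} \ge t_{h^*}$, together with the monotonicity of $\theta$ and the strict decrease of $h \mapsto t_h$ on $[h^\#,\infty)$ (obtained by implicit differentiation of $t^2(h-t)^{2b} = m^2$, which yields $dt_h/dh = -bt_h/(h - (b+1)t_h) < 0$ there), is used to fix an $h_0 = O(m^{1/(b+1)})$ at which $t_{h_0} \le \gamma_{h_0}$, so that $h_{\cri} \le h_0$.

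The main obstacle is this admissibility step in the upper bound: because $\theta$ is only assumed to be non-increasing, with no quantitative lower bound on its decay rate, the inequality $t_h \le \gamma_h$ is not automatic at an arbitrary height. The hypothesis $\gamma_{t_{h^*}} \ge t_{h^*}$ is precisely the quantitative input that secures admissibility at a height of order $m^{1/(b+1)}$, thereby closing the slicing comparison and yielding the claimed scaling.
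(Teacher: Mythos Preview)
Your lower bound argument is correct and in fact cleaner than the paper's. For $h<t_{h^*}<h^\#$ and $\supp u\subset\R\times[0,h]$, each slice $x_N\mapsto u(\bm{x}',x_N)$ lies in $H^1([0,h])$ with value $m$ at $0$ and $0$ at $h$; since $g_h$ is strictly decreasing on $(0,h]$ when $h<h^\#$, the one-dimensional energy of every slice is bounded below by $g_h(h)=m^2/h+h^{2b+1}/(2b+1)$, and the comparison with $v_{\gamma_h}$ goes through because $\gamma_h\ge\gamma_{t_{h^*}}\ge t_{h^*}>h$. The paper reaches the same bound $h_{\cri}\ge t_{h^*}$ by a less direct route, first modifying $\theta$ above $t_{h^*}$ and invoking monotonicity with respect to $h$ to show $\{x_N<t_{h^*}\}\subset\{u_h>0\}$ for $h\le t_{h^*}$, and then deriving the same energy lower bound from that inclusion.

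The upper bound, however, has a genuine gap. You need $t_{h_0}\le\gamma_{h_0}$ at some $h_0\ge h^*$ with $h_0$ bounded by a constant (depending only on $b$) times $m^{1/(b+1)}$, but the three ingredients you list cannot produce this. Since $\theta$ is merely non-increasing, it may drop arbitrarily fast just above $t_{h^*}$: take $\theta\equiv t_{h^*}$ on $(0,t_{h^*}]$ and $\theta\equiv\varepsilon$ on $(t_{h^*},\infty)$ with $\varepsilon$ much smaller than $t_{2h^*}$. Then the hypothesis $\gamma_{t_{h^*}}\ge t_{h^*}$ holds, yet $\gamma_{h_0}=\varepsilon<t_{h_0}$ for every $h_0\in[h^*,2h^*]$, and the first $h_0$ at which $t_{h_0}\le\varepsilon$ depends on $\varepsilon$, not on $b$ alone. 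So your flat competitor $v_{t_h}$ is not admissible in $\K_{\gamma_h}$ at any height of order $m^{1/(b+1)}$, and the slicing comparison does not close.

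The paper avoids this by not asking $v_{t_h}$ to be admissible in $\K_{\gamma_h}$. Instead it uses a comparison principle (\Cref{cp}) between minimizers in \emph{different} admissible classes: $v_{t_h}$ is the unique minimizer of $\J_h$ in $\K_{t_h}$ for $h>h^*$, and one compares it with a minimizer $u_h\in\K_{\gamma_h}$. After locating a single point where $u_h<v_{t_h}$ (using the Dirichlet condition on $\partial\R\times(\gamma_h,\infty)$ and continuity), the comparison principle forces $u_h\le v_{t_h}$, hence $\supp u_h\subset\{x_N<t_h\}\subset\{x_N<h\}$. This gives $h_{\cri}\le h^*$ with no extra hypothesis on $\theta$ at all; the assumption $\gamma_{t_{h^*}}\ge t_{h^*}$ is used only for the lower bound.
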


Further properties of solutions to the minimization problem for $\J_h$ are summarized in the following theorem. 

\begin{thm}[Structure theorem]
\label{structure}
Under the assumptions of \Cref{exofhcrit}, if in addition $\theta$ is continuous, for every $h > 0$ there exist two (possibly equal) global minimizers of $\J_h$ in $\K_{\ga_h}$, namely $u_h^+,u_h^-$, with the following properties:
\begin{itemize}
\item[$(i)$] for any sequence $\{(h_n, u_n)\}_n$ such that $h_n \nearrow h$ and $u_n \in \K_{\ga_{h_n}}$ is a global minimizer of $\J_{h_n}$ we have that $\nabla u_n \to \nabla u_h^+$ in $L^2(\Omega;\RR^N)$, and $u_n \to u_h^+$ in $H^1_{\loc}(\Omega)$ and uniformly on compact subsets of $\Omega$;
\item[$(ii)$] for any sequence $\{(h_n, u_n)\}_n$ such that $h_n \searrow h$ and $u_n \in \K_{\ga_{h_n}}$ is a global minimizer of $\J_{h_n}$ we have that $\nabla u_n \to \nabla u_h^-$ in $L^2(\Omega;\RR^N)$, and $u_n \to u_h^-$ in $H^1_{\loc}(\Omega)$ and uniformly on compact subsets of $\Omega$;
\item[$(iii)$] if $w \in \K_{\ga_h}$ is a global minimizer of $\J_h$ then $u_h^- \le w \le u_h^+$;
\item[$(iv)$] $u_h^+,u_h^-$ are symmetric with respect to the coordinate hyperplanes $\{x_i = 0\}$, $i = 1, \dots, N - 1$ and coincide with their respective symmetric decreasing rearrangements with respect to the variables $x_1, \dots, x_{N - 1}$. 
\end{itemize}
Furthermore, the minimization problem for $\J_h$ in $\K_{\ga_h}$ admits a unique solution for all but countably many values of $h$.
\end{thm}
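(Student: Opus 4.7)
My plan is first to establish existence of a global minimizer of $\J_h$ in $\K_{\ga_h}$ for each $h>0$ by the direct method: coercivity comes from the Dirichlet energy bound together with the fixed trace on $\Gamma_{\ga_h}$, weak lower semi-continuity of $\int_\Omega|\nabla u|^2$ is classical, and the characteristic-function term is handled by upgrading weak $H^1_{\loc}$ convergence of a minimizing sequence to locally uniform convergence via the local Lipschitz regularity of Alt--Caffarelli minimizers, so that $\chi_{\{u>0\}}\le\liminf\chi_{\{u_n>0\}}$ pointwise on any compact set.

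The second step is the key lattice property: if $u,v\in\K_{\ga_h}$ are nonnegative minimizers (nonnegativity is achieved by truncation, since $u_0\ge 0$ and $\J_h(u_+)\le\J_h(u)$), then so are $u\vee v$ and $u\wedge v$. This uses
\begin{align*}
\int_\Omega|\nabla(u\vee v)|^2 + \int_\Omega|\nabla(u\wedge v)|^2 &= \int_\Omega|\nabla u|^2 + \int_\Omega|\nabla v|^2,\\
\chi_{\{u\vee v>0\}}+\chi_{\{u\wedge v>0\}} &= \chi_{\{u>0\}}+\chi_{\{v>0\}},
\end{align*}
which force equality in $\J_h(u\vee v)+\J_h(u\wedge v)\ge 2\min \J_h$. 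Defining
\[
u_h^+ \coloneqq \sup\{u\in\K_{\ga_h} : u\text{ minimizes }\J_h\},\qquad u_h^- \coloneqq \inf\{u\in\K_{\ga_h} : u\text{ minimizes }\J_h\}
\]
pointwise, compactness along suitable monotone chains (uniform $H^1$ bound plus Lipschitz estimate) shows that $u_h^\pm$ are themselves minimizers, so $(iii)$ is tautological. For $(iv)$, the reflections $\sigma_i\colon x_i\mapsto -x_i$ and the associated Steiner symmetrizations in $x_1,\dots,x_{N-1}$ preserve $\Omega$, $\Gamma_{\ga_h}$, $u_0$, and $Q_h^2$ (crucially, $Q_h$ depends only on $x_N$), hence send minimizers to minimizers via Pólya--Szegő for the Dirichlet term and equimeasurability on slices $\{x_N=s\}$ for the second term. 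Maximality forces $u_h^+$ to be $\sigma_i$-symmetric; coincidence with its symmetric decreasing rearrangement follows from the Brothers--Ziemer equality case of Pólya--Szegő, applicable because non-degeneracy of one-phase minimizers yields level sets of measure zero.

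For $(i)$ and $(ii)$, given $h_n\to h$ and minimizers $u_n$ of $\J_{h_n}$, a uniform $H^1$ bound follows from comparison against a test function adapted from $u_0$ with parameter interpolating between $\ga_{h_n}$ and $\ga_h$, which is admissible thanks to the continuity of $\theta$. A subsequence converges weakly in $H^1_{\loc}(\Omega)$ and locally uniformly to some $u^*\in\K_{\ga_h}$, and $u^*$ is a minimizer of $\J_h$ by the lsc argument of Step 1. The identification $u^*=u_h^+$ for $h_n\nearrow h$ and $u^*=u_h^-$ for $h_n\searrow h$ rests on monotone dependence on $h$: since $\theta$ is non-increasing, for $h'<h$ one has $\Gamma_{\ga_h}\supset\Gamma_{\ga_{h'}}$ and hence $\K_{\ga_h}\subset\K_{\ga_{h'}}$; applying the lattice identity across the two heights (in particular observing that $u_h^+\wedge u_{h'}^+\in\K_{\ga_h}$) yields $u_h^+\le u_{h'}^+$ and $u_h^-\le u_{h'}^-$. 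The left and right limits along any sequence of minimizers are then identified with $u_h^+$ and $u_h^-$ respectively by extremality. Finally, pointwise monotonicity of $h\mapsto u_h^\pm(\bd{x})$ implies at most countably many discontinuities, and at each continuity point the left and right limits agree so $u_h^+=u_h^-$, yielding the uniqueness claim off a countable exceptional set.

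The chief technical obstacle is the cross-$h$ lattice step: verifying that the meet $u_h^+\wedge u_{h'}^+$ (and the analogous competitor for the $u_h^-$ chain) genuinely lies in $\K_{\ga_h}$ requires careful handling of traces on the variable lateral piece $\partial\R\times(\ga_h,\ga_{h'})$, where the monotonicity of $\theta$ is used decisively. A secondary subtlety is constructing common comparison functions bridging the Dirichlet data at $\ga_{h_n}$ and $\ga_h$ so as to obtain uniform $H^1$ bounds and the correct trace on the $h$-dependent boundary $\Gamma_{\ga_h}$ in the limit; this is where the continuity of $\theta$ enters crucially.
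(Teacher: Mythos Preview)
Your overall architecture is sound and in places differs usefully from the paper: you define $u_h^\pm$ as the pointwise sup/inf over all minimizers (the paper instead \emph{defines} them as the limits along monotone sequences $h_n\to h$ and then deduces extremality), and for $(iv)$ you invoke the Brothers--Ziemer equality case of P\'olya--Szeg\H{o} directly, whereas the paper first proves generic uniqueness, uses uniqueness to force $u_h=u_h^*$, and then passes to the limit for the exceptional $h$. Both routes are legitimate.

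However, your ``cross-$h$ lattice step'' has a genuine gap, and you have misidentified where the difficulty lies. Admissibility of $u_h^+\wedge u_{h'}^+\in\K_{\ga_h}$ and $u_h^+\vee u_{h'}^+\in\K_{\ga_{h'}}$ is indeed straightforward from the monotonicity of $\theta$, as you say. But the ``lattice identity'' by itself only gives you the \emph{equality}
\[
\int_\Omega|\nabla(u\vee v)|^2+\int_\Omega|\nabla(u\wedge v)|^2=\int_\Omega|\nabla u|^2+\int_\Omega|\nabla v|^2,\qquad \chi_{\{u\vee v>0\}}+\chi_{\{u\wedge v>0\}}=\chi_{\{u>0\}}+\chi_{\{v>0\}},
\]
which is not enough when the two functionals $\J_h$ and $\J_{h'}$ carry \emph{different} weights. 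What you actually need is the inequality $\J_h(u\wedge v)+\J_{h'}(u\vee v)\le \J_h(u)+\J_{h'}(v)$, and this hinges on the pointwise monotonicity $(h'-x_N)_+^{2b}\le(h-x_N)_+^{2b}$ for $h'<h$: the difference of the two sides reduces to $\int_\Omega\big[(h'-x_N)_+^{2b}-(h-x_N)_+^{2b}\big]\big[\chi_{\{u\vee v>0\}}-\chi_{\{v>0\}}\big]\,d\bm{x}\le 0$. This is precisely the content of the paper's monotonicity theorem (its Theorem~5.3), and it is the step that forces $u_h^+\le u_{h'}^+$; without it your argument does not close. A second, smaller gap: your countability argument for uniqueness says ``pointwise monotonicity of $h\mapsto u_h^\pm(\bm{x})$ implies at most countably many discontinuities'', but this gives a countable exceptional set for each fixed $\bm{x}$, and the union over $\bm{x}\in\Omega$ is a priori uncountable. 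You need to restrict to a countable dense set of points (continuity of minimizers then propagates equality), or, as the paper does, work with a fixed countable family of balls $B_j$ and show each set $\Lambda_{j,n}=\{h:\sup_{B_j}|u_h^+-u_h^-|\ge 1/n\}$ is finite.
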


Finally, we remark that while the additional Dirichlet constraint $u = 0$ on $\partial \R \times (\ga_h,\infty)$ allows us to construct non-flat global minimizers, it has the disadvantage of potentially destroying the regularity of minimizers and their free boundaries at the interface $\partial \R \times \{\ga_h\}$, where one has Dirichlet boundary conditions on $\partial \R \times (\ga_h,\infty)$ and periodic boundary conditions on $\partial \R \times (0,\ga_h)$. 
         \begin{center}
         \begin{tikzpicture}[blend group=normal, scale=0.7]
         
         \draw [dashed] (2,0) -- (2,3); 
         \draw (2,3) -- (2,5.5); 
         
         \draw [dashed] (13,0) -- (13,1); 
         \draw (13,1) -- (13,5.5); 
         
         \draw [thin] (2,2) to [out=0, in=190] (8,5);
         \draw [dashed] (0, 2.583) to [out=-29.5, in=180] (2,2);
         \draw [thin, fill=black, fill opacity=0.2, draw opacity=0] (2,0) -- (2,2) to [out=0, in=190] (8,5) -- (8,0) -- (2,0);
         \draw [dashed] (2,2) -- (4.5,2);
         \draw [ultra thin] (2,2.3) -- (2.3,2.3) -- (2.3,2);
         
         \draw [thin] (13, 2) to [out=90, in=190] (19,5); 
         \draw [dashed] (11,4.305) to [out=-15.25, in=90] (13,2);
         \draw [thin, fill=black, fill opacity=0.2, draw opacity=0] (13,0) -- (13,2) to [out=90, in=190] (19,5) -- (19,0) -- (13,0);
         \draw [dashed] (13,2) -- (15.5,2);
         \draw [ultra thin] (13,2.3) -- (13.3,2.3) -- (13.3,2);
         
         \draw[fill] (2,3) circle [radius=0.06];
         \node [left] at (2,3) {$(-\la/2,\ga_h)$};
         \draw[fill] (13,1) circle [radius=0.06];
         \node [left] at (13,1) {$(-\la/2,\ga_h)$};
         \node [right] at (2,5) {$u \equiv 0$};     
         \node [right] at (13,5) {$u \equiv 0$};       
         \node [right] at (3,-1) {Figure A};
         \node [right] at (14,-1) {Figure B};
         
         \end{tikzpicture}
         \end{center}
Notice that due to the periodic boundary conditions below the line $\{y = \ga_h\}$, if the free boundary $\partial \{u > 0\}$ of a global minimizer $u \in \K_{\ga_h}$ of the functional $\J_h$ touches the fixed boundary strictly below the line $\{y = \ga_h\}$ (as in Figure A) then it must be regular across periods as a consequence of the interior regularity (see \Cref{exofmin}). In particular, in dimension $N = 2$, this implies that the free boundary hits the fixed boundary with a horizontal tangent and furthermore every global minimizer is a solution to (\ref{freebdrypb}) in the entire half-plane. On the other hand, if the free boundary $\partial \{u > 0\}$ of a global minimizer $u \in \K_{\ga_h}$ of the functional $\J_h$ touches the fixed boundary strictly above the line $\{y = \ga_h\}$ then we are in a position to apply the recent work of Chang-Lara and Savin \cite{MR3916702} (see also \cite{MR682265}, \cite{MR818805}, \cite{MR2047400}) in which it is shown that the free boundary of a viscosity solution of (\ref{freebdrypb}) detaches tangentially from a portion of the fixed boundary where $u$ vanishes and is a $C^{1,1/2}$ regular hypersurface locally in a neighborhood of $\partial \Omega$ (see Figure B). The result is obtained relating the behavior of the free boundary to a Signorini-type obstacle problem. In the remaining case for the two dimensional problem, i.e., when $(-\la/2,\ga_h)$ is an accumulation point for the free boundary, it was proved by the authors (see \cite{GL19} and \cite{mythesis}) that the free boundary of a minimizers which coincides with its symmetric decreasing rearrangement with respect to the variable $x$ must hit the fixed boundary with horizontal tangent (see Figure C).
         \begin{center}
         \begin{tikzpicture}[blend group=normal, scale=0.7]
         
         \draw [dashed] (2,0) -- (2,3); 
         \draw (2,3) -- (2,5.5); 
         
         \draw (2,3) to [out=0, in=190] (8,5);
         \draw [dashed] (0, 3.583) to [out=-29.5, in=180] (2,3);
         \draw [fill=black, thin, fill opacity=0.2, draw opacity=0] (2,0) -- (2,3) to [out=0, in=190] (8,5) -- (8,0) -- (2,0);
         \draw [dashed] (2,3) -- (4.5,3);
         \draw [ultra thin] (2,3.3) -- (2.3,3.3) -- (2.3,3);
         
         \draw[fill] (2,3) circle [radius=0.06];
         \node [left] at (2,2.5) {$(-\la/2,\ga_h)$};   
         \node [right] at (2,5) {$u \equiv 0$};      
         \node [right] at (3,-1) {Figure C};
         \end{tikzpicture}
         \end{center}
         
\begin{thm}
\label{mainthm}
Given $m, \la, h > 0$ and $\ga < h$, let $N = 2$, $b = 1/2$, and let $\Omega$, $\J_h$, and $\K_{\ga}$ be defined as in $(\ref{strip})$, $(\ref{Jper})$ and $(\ref{Kg})$, respectively. Let $u \in \K_{\ga}$ be a global minimizer of $\J_h$ which coincides with symmetric decreasing rearrangement with respect to the variable $x$ and assume that $\bm{x}_0 = (-\la/2,\ga)$ is an accumulation point for the free boundary on $\partial \Omega$, i.e.,
\[
\bm{x}_0 \in \overline{\partial\{u > 0\} \cap \Omega}.
\]
Then the portion of the free boundary $\partial \{u > 0\}$ in $\{\bm{x} \in \Omega : -\la/2 < x < 0\}$ can be described by the graph of a function $x = g(y)$ and furthermore, the free boundary meets the fixed boundary at the point $\bm{x}_0$ with horizontal tangent, i.e.,
\[
\lim_{y \to \ga}\frac{|g(y) - g(\ga)|}{|y - \ga|} = \infty.
\]
\end{thm}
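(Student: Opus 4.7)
The plan is to first express the free boundary near $\bm{x}_0$ as the graph of a function via the symmetric decreasing rearrangement, and then prove the horizontal tangent property by a blow-up argument combined with the classification of one-homogeneous minimizers on a slit domain. For the graph representation, since $u(\cdot,y)$ coincides with its symmetric decreasing rearrangement in $x$, the positivity set $\{x : u(x,y)>0\}$ at each height is either empty or of the form $(-a(y), a(y))$ with $a(y)\in(0,\la/2]$, so in the strip $\{-\la/2<x<0\}$ the free boundary is the graph $x=g(y):=-a(y)$ wherever $a(y)<\la/2$. For the horizontal tangent, I would argue by contradiction: assume there exist $C>0$ and a sequence $y_n\to\ga$ with $|g(y_n)+\la/2|/|y_n-\ga|\le C$. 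After extraction, the sign of $y_n-\ga$ is constant and $(g(y_n)+\la/2)/(y_n-\ga)\to L$ with $|L|\le C$; I treat the case $y_n>\ga$ (so $L\in[0,C]$), the case from below being analogous.

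Setting $r_n:=y_n-\ga$, I define the rescalings $u_n(\bm{\xi}):=u(\bm{x}_0+r_n\bm{\xi})/r_n$. Since $Q^2=h-\ga>0$ in a neighborhood of $\bm{x}_0$, the Alt--Caffarelli Lipschitz estimate applies and the $u_n$ are uniformly Lipschitz on compact sets with $u_n(0)=0$. After extending $u$ by $\la$-periodicity in $x$ to $\RR\times(0,\infty)$, a subsequence converges locally uniformly and in $H^1_{\loc}$ to a blow-up limit $u_\infty$ defined on the slit domain $D:=\RR^2\setminus(\{0\}\times[0,\infty))$. Standard variational arguments identify $u_\infty$ as a minimizer on $D$ of the Alt--Caffarelli functional with constant weight $q_0^2:=h-\ga$ and Dirichlet data $u_\infty=0$ on the slit. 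The limit $u_\infty$ inherits the symmetry $u_\infty(-\xi_1,\xi_2)=u_\infty(\xi_1,\xi_2)$ (from the symmetry of $u$ about $x=-\la/2$, itself derived from combining the rearrangement with the periodicity), monotonicity non-decreasing in $|\xi_1|$, non-triviality via Alt--Caffarelli non-degeneracy, and one-homogeneity through a boundary-adapted Weiss monotonicity formula (whose boundary contribution on the slit vanishes since $u_\infty=0$ there).

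Combining these properties, the free boundary of $u_\infty$ consists of the two symmetric rays $\{\xi_1=\pm L\xi_2,\, \xi_2>0\}$ through the origin, with positivity cone of opening $\beta:=\pi+2\arctan(1/L)\in[\pi,2\pi]$. Writing $u_\infty=\rho f(\phi)$ in polar coordinates, harmonicity on the positivity set gives $f''+f=0$ and the boundary conditions force $f=0$ at the two bounding rays. A Sturm--Liouville analysis then shows that non-trivial solutions exist only when $\beta=n\pi$ for a positive integer $n$: the case $\beta=\pi$ (i.e., $L=\infty$) contradicts the standing assumption $L\le C$, while the alternative $\beta=2\pi$ (i.e., $L=0$) yields $f(\phi)=-A\cos\phi$, which changes sign and is excluded by $u_\infty\ge 0$. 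This exhausts the admissible values of $L$ and yields the desired contradiction. The principal technical obstacle will be the justification of one-homogeneity of $u_\infty$ through a boundary-adapted Weiss monotonicity formula at the corner point $\bm{x}_0$ where periodic and Dirichlet conditions meet, together with the rigorous identification of the blow-up domain as a single-slit plane obtained by unfolding the periodicity.
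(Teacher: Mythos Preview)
The paper does not contain a proof of this theorem; it is stated in the introduction with a reference to the separate works \cite{GL19} and \cite{mythesis}, so there is no in-paper proof to compare against. The graph representation is essentially the content of \Cref{graph} (specialized to $N=2$), so that part of your proposal matches what the paper records.

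Your blow-up strategy is a natural and plausible route, and the Sturm--Liouville classification of one-homogeneous harmonic profiles in a sector is correct. Beyond the boundary-Weiss issue you already flag, two points deserve attention. First, your computation of the opening $\beta=\pi+2\arctan(1/L)$ tacitly assumes that the blow-up positivity set fills the entire lower half-plane $\{\xi_2<0\}$; a priori there could be free boundary below $\gamma$ accumulating at $\bm{x}_0$ at a comparable rate, producing additional rays in $\{\xi_2<0\}$ and splitting the positivity cone into two symmetric sectors of opening strictly less than $\pi$. This case is easily dispatched (no nontrivial one-homogeneous harmonic function vanishes on both edges of a sector of opening $<\pi$), but it should be treated explicitly rather than absorbed into the formula for $\beta$. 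Second, the non-degeneracy you invoke to ensure $u_\infty\not\equiv 0$ is not covered by \Cref{competition2} as stated: that proposition requires $u$ to vanish on $B_r(\bm{x}_0)\cap\partial\Omega$, which fails on the portion $\{-\la/2\}\times(\ga-r,\ga)$ where only periodicity is imposed. You would need to run the non-degeneracy argument for the periodically extended $u$ viewed as a local minimizer in the slit domain near $\bm{x}_0$; this is where the ``unfolding'' you mention must be made precise before, not after, passing to the limit.
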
    
         
In conclusion, we would like to remark that Theorems \ref{exofgamma} - \ref{structure} are a preliminary step towards a variational proof of the existence of regular waves and of Stokes waves. Indeed, if one could show that for some particular choice of the parameters $m, \la, h, \gamma_h$ the free boundary touches the fixed boundary below or at the point $y = \ga_h$, then (see figures A and C) \Cref{exofhcrit} and \Cref{mainthm} would give a variational proof of the existence of regular waves established by Krasovski\u{\i} \cite{MR138284} and Keady and Norbury \cite{MR0502787}. In turn, if in this range of parameters we could show that the free boundary of $u_h$ approaches $\{x_N = h_{\cri}\}$ as $h \searrow h_{\cri}$ (see \Cref{exofhcrit}) this would give a variational proof of the existence of Stokes waves. Both problems are under study.

Independently of their applications to water waves, we believe that the techniques developed in this paper are of interest in themselves and could be applied to other free boundary problems.

Our paper is organized as follows: for the convenience of the reader, in \Cref{bgr} we recall some well-known results on the existence and regularity of minimizers of the energy functional $\J_h$. In \Cref{auxsec} we study an auxiliary one dimensional variational problem; the results of this section will be instrumental in \Cref{nontriv}, where we present the proof of \Cref{exofgamma}. \Cref{propertiesglobmin} is dedicated to the study of qualitative and structural properties of global minimizers. In particular, \Cref{propertiesglobmin} contains the proofs of \Cref{exofhcrit}, \Cref{scalingthm}, and \Cref{structure}.

\section{Background results}
\label{bgr}
In this section we collect well-known results concerning existence and regularity properties of solutions to the minimization problem for $\J_h$ in $\K_{\ga}$.

\begin{thm}
\label{exofmin}
Given $b, m, h, \ga, \la > 0$, let $\Omega$, $\K_{\ga}$, and $\J_h$ be defined as in $(\ref{strip})$, $(\ref{Jper})$, and $(\ref{Kg})$, respectively. Then the minimization problem for $\J_h$ in $\K_{\ga}$ admits a solution. Furthermore, if $u \in \K_{\ga}$ is a global minimizer of the functional $\J_h$, the following hold: 
\begin{itemize}
\item[$(i)$] $u$ is subharmonic in $\Omega$;
\item[$(ii)$] $u$ is locally Lipschitz continuous in $\Omega$; 
\item[$(iii)$] $u$ is harmonic in the set $\{u > 0\}$;
\item[$(iv)$] $u$ satisfies the free boundary condition $(\ref{freebdrypb})_3$ in a weak sense, i.e.,
\[
\lim_{\e \to 0^+}\int_{\partial \{ u > \e\}}\left(|\nabla u|^2 - (h - x_N)^{2b}_+\right)\eta \cdot \nu\,d\mathcal{H}^{N - 1} = 0 \quad \text{ for every } \eta \in C^{\infty}_c(\Omega;\RR^N);
\]
\item[$(v)$] for any $K$ compactly contained in $\R \times (0,h)$, the free boundary $\partial\{u > 0\} \cap K$ is a smooth hypersurface except possibly on a closed singular set $\Sigma_{\operatorname{sing}} \subset \partial \{u > 0\}$ of Hausdorff dimension $N - 5$, and 
\[
\partial_{-\nu}u(\bm{x}) = (h - x_N)^b, \quad \bm{x} = (\bm{x}',x_N) \in \partial\{u > 0\} \cap K \setminus \Sigma_{\operatorname{sing}}.
\] 
\end{itemize}
\end{thm}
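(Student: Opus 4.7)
My plan is to deduce each conclusion by specializing standard one-phase Alt--Caffarelli techniques to the periodic-Dirichlet setting. The key observation is that all relevant perturbations can be chosen compactly supported in $\Omega$, and hence automatically respect both the periodicity on the lateral portion of $\partial\Omega$ and the Dirichlet datum on $\Gamma_{\ga}$, so that the interior theory applies essentially verbatim.

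For existence I would use the direct method. Since $\J_h(u_0) < \infty$, any minimizing sequence $\{u_n\}_n$ satisfies $\sup_n \|\nabla u_n\|_{L^2(\Omega)} < \infty$, and the trace $u_n = m$ on $\R \times \{0\}$ combined with a Poincar\'e inequality on slabs $\R \times (0,R)$ gives uniform $H^1$-bounds on bounded subdomains. A diagonal extraction produces $u_n \rightharpoonup u$ in $H^1_{\loc}(\Omega)$ with $u_n \to u$ $\LN$-a.e.; periodicity and the Dirichlet condition on $\Gamma_{\ga}$ pass to the limit by continuity of the trace. Lower semicontinuity of the Dirichlet integral is classical, while $\chi_{\{u > 0\}} \le \liminf_n \chi_{\{u_n > 0\}}$ a.e., so Fatou's lemma handles the bulk term and $u$ is a global minimizer.

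For (i), I would test with $(u - \e\varphi)_+$ for $\varphi \in C_c^{\infty}(\Omega)$ nonnegative: the positivity set shrinks, so minimality reduces to a Dirichlet energy inequality which yields $-\int \nabla u \cdot \nabla \varphi \,d\bm{x} \ge 0$ in the limit $\e \to 0^+$, i.e.\ $\Delta u \ge 0$ weakly. For (iii), testing with $u \pm \e\varphi$ for $\varphi \in C_c^{\infty}(\{u > 0\})$ leaves the positivity set unchanged for small $\e$, so stationarity gives $\Delta u = 0$ on $\{u > 0\}$. The local Lipschitz bound (ii) is the original Alt--Caffarelli estimate (see \cite{MR618549}), which only uses subharmonicity and local boundedness of $Q$ and therefore applies on subdomains compactly contained in $\Omega$ without modification.

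For (iv), I would compute the first inner variation: $u_t(\bm{x}) := u(\bm{x} - t\eta(\bm{x}))$ with $\eta \in C_c^{\infty}(\Omega;\RR^N)$ is admissible for $|t|$ small, and differentiating $\J_h(u_t)$ at $t = 0$, after regularizing $\chi_{\{u > 0\}}$ to $\chi_{\{u > \e\}}$ and invoking the divergence theorem on level sets (smooth for a.e.\ $\e$ by Sard), produces the stated weak identity as $\e \to 0^+$. Finally, for (v), on any $K \Subset \R \times (0,h)$ the weight $Q$ is H\"older continuous and bounded between positive constants, so the results of \cite{MR618549}, \cite{MR1759450}, and \cite{MR3385632} apply directly and yield both the $C^{1,\alpha}$ regularity of the reduced free boundary and the bound on the Hausdorff dimension of the singular set, while (iv) upgrades to the pointwise free-boundary condition on the regular part. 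The main obstacle is the rigorous passage to the limit in (iv), which relies on the local Lipschitz bound from (ii) together with the coarea formula to control the boundary integrals uniformly in $\e$.
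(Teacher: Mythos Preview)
Your proposal is correct and follows essentially the same route as the paper: the paper simply observes that existence follows from $\J_h(u_0)<\infty$ together with the direct method as in \cite{MR618549}, and then defers statements $(i)$--$(iv)$ to Lemma~2.2, Corollary~3.3, Lemma~2.4, and Theorem~2.5 of \cite{MR618549}, and statement $(v)$ to Corollary~1.2 of \cite{MR3385632}. Your outline unpacks exactly these arguments, so nothing further is needed.
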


\begin{proof}
Since $\J_h(u_0) < \infty$ for $u_0$ defined as in (\ref{u_0gam}), the proof of existence is essentially analogous to that of Theorem 1.3 in \cite{MR618549} (see also Theorem 2.2 in \cite{MR2915865}) and therefore we omit it. The proofs for statements $(i)$ through $(iv)$ can be found in \cite{MR618549}; more precisely, we refer to Lemma 2.2, Corollary 3.3, Lemma 2.4, and Theorem 2.5. Statement $(v)$ is  Corollary 1.2 in \cite{MR3385632}.
\end{proof}

\begin{rmk}
In view of property $(i)$ we can work with the precise representative 
\[
u(\bm{x}) = \lim_{r \to 0^+}\fint_{B_r(\bm{x})}u(\bm{y})\,d\bm{y}, \quad \bm{x} \in \Omega.
\]
\end{rmk}

Typically, a first step for the study of minimizers and their free boundaries is to obtain non-degeneracy estimates. The next proposition, reported below for future reference, is a classical result in this direction and is essentially due to Alt and Caffarelli (see Lemma 3.4 and Remark 3.5 in \cite{MR618549}; see also Theorem 3.6 and Remark 5.2 in \cite{MR2915865}). For the convenience of the reader, we adapt the statement to our framework.
\begin{prop}
\label{competition2}
Given $b,m,h,\ga, \la > 0$, let $\Omega, \J_h$, and $\K_{\ga}$ be defined as in $(\ref{strip})$, $(\ref{Jper})$, and $(\ref{Kg})$, respectively. Then for every $k \in (0,1)$ there exists a positive constant $C = C(N,k)$ such that for every minimizer $u$ of $\J_h$ in $\K_{\ga}$ and for every ball $B_r(\bm{x}) \subset \Omega$, if 
\[
\frac{1}{r} \fint_{\partial B_r(\bm{x})}u\, d\mathcal{H}^{N-1} \le C(h - x_N - kr)_+^b,
\]
then $u \equiv 0$ in $B_{kr}(\bm{x})$. Moreover, the result is still valid for balls not entirely contained in $\Omega$ if $u$ vanishes on $B_r(\bm{x}) \cap \partial \Omega$. In particular, this holds if $B_r(\bm{x}) \cap \partial \Omega \subset \partial \R \times (\ga,\infty)$. 
\end{prop}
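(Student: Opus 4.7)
The proposition is a classical Alt-Caffarelli non-degeneracy estimate (Lemma 3.4 in \cite{MR618549}), with only a minor modification to accommodate the specific form $Q(\bm{x}) = (h - x_N)_+^b$. The $x_N$-dependence is handled via the elementary pointwise monotonicity
\[
(h - y_N)_+^{2b} \;\ge\; (h - x_N - kr)_+^{2b} \quad \text{for all } \bm{y} \in B_{kr}(\bm{x}),
\]
which reduces the analysis on $B_{kr}(\bm{x})$ to the case of a constant coefficient and explains the precise form of the hypothesis.

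My plan is to construct a cut-off competitor that vanishes identically on $B_{kr}(\bm{x})$. A convenient choice is $w = \eta\, v$ in $B_r(\bm{x})$ and $w = u$ outside, where $v$ is the harmonic extension of $u|_{\partial B_r(\bm{x})}$ (by \Cref{exofmin}$(i)$, $u$ is subharmonic, so $0 \le u \le v$), and $\eta$ is the capacitary potential of $B_{kr}(\bm{x})$ in $B_r(\bm{x})$, i.e., the harmonic function on the annulus $B_r(\bm{x}) \setminus \overline{B_{kr}(\bm{x})}$ with boundary values $0$ on the inner sphere and $1$ on the outer (extended by $0$ on $B_{kr}(\bm{x})$ and by $1$ outside $B_r(\bm{x})$). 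Then $w \in \K_\ga$ and $w \equiv 0$ on $B_{kr}(\bm{x})$. Testing minimality of $u$ against $w$, using the inclusion $\{u > 0\} \setminus \{w > 0\} \subset B_{kr}(\bm{x})$ together with the monotonicity above, the Dirichlet principle $\int |\nabla v|^2 \le \int |\nabla u|^2$, and an integration by parts exploiting $\Delta \eta = 0$ in the annulus (which reduces the gradient difference to the boundary term $\int_{\partial B_r(\bm{x})} u^2 \partial_\nu \eta$), one obtains the quadratic inequality
\[
(h - x_N - kr)_+^{2b}\,|\{u > 0\} \cap B_{kr}(\bm{x})| \;\le\; C_1(N,k)\,r^{N-2}\int_{\partial B_r(\bm{x})} u^2\, d\mathcal{H}^{N-1}.
\]

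The passage from this quadratic bound to the linear statement in the proposition is the delicate step. Combining the Poisson kernel estimate $\sup_{B_{kr}(\bm{x})} u \le \sup_{B_{kr}(\bm{x})} v \le C(N,k)\fint_{\partial B_r(\bm{x})} u\, d\mathcal{H}^{N-1}$ with the classical Alt-Caffarelli density estimate, which forces $|\{u > 0\} \cap B_{kr}(\bm{x})| \ge c_0(N,k) r^N$ whenever $u \not\equiv 0$ on $B_{kr}(\bm{x})$ (obtained by iterating the quadratic inequality on a dyadic family of subballs anchored at a free boundary point, and using continuity of $u$ from \Cref{exofmin}$(ii)$), one converts the quadratic bound into the linear one
\[
\frac{1}{r}\fint_{\partial B_r(\bm{x})} u \, d\mathcal{H}^{N-1} \;\ge\; C(N,k)\,(h - x_N - kr)_+^{b}.
\]
Taking the contrapositive with this constant $C(N,k)$ yields the claim.

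For the boundary case $B_r(\bm{x}) \not\subset \Omega$ with $u = 0$ on $B_r(\bm{x}) \cap \partial \Omega$, the zero extension of $u$ across this portion of $\partial \Omega$ preserves subharmonicity and the competitor $w$ remains admissible in $\K_\ga$: on the Dirichlet portion $\partial \R \times (\ga, \infty)$ the zero extension matches $u_0 \equiv 0$, and on the periodic lateral boundary it is consistent with the periodicity condition built into the space. The argument above then applies verbatim in the enlarged ball. The main technical obstacle throughout is the self-improving bootstrap from the quadratic to the linear estimate, which is the fundamental mechanism of the Alt-Caffarelli framework and the place where this proof sketch relies most heavily on the prior literature.
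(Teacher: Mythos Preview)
The paper does not supply its own proof of this proposition; it simply records it as a classical result and cites Lemma~3.4 and Remark~3.5 of Alt--Caffarelli \cite{MR618549} (together with the adaptation in \cite{MR2915865}). Your sketch reproduces exactly that Alt--Caffarelli argument---the cut-off competitor, the quadratic estimate, and the bootstrap to the linear bound via the density estimate---so your approach coincides with what the paper invokes.
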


\section{An auxiliary one-dimensional variational problem}
\label{auxsec}
This section is dedicated to the study of the minimization problem for the functional 
\begin{equation}
\label{1d}
\mathcal{I}_h(v) \coloneqq \int_0^{\infty}\big(v'(t) + \chi_{\{v > 0\}}(t)(h - t)^{2b}_+\big)\,dt,
\end{equation}
defined in the class
\begin{equation}
\label{K1d}
\K_{\ga,1\text{-d}} \coloneqq \{v \in L^1_{\loc}((0,\infty)) : v \in H^1((0,r)) \text{ for every } r > 0, v(0) = m, \text{ and } v(\ga) = 0\}.
\end{equation}
Our motivation for considering this problem comes from the following observation: if $u \in \K_{\ga}$ is of the form $u = u(x_N)$, then $u(0) = m$, $u(\ga) = 0$, and by Tonelli's theorem 
\begin{equation}
\label{tonelliJ}
\J_h(u) = \int_{\R}\int_0^{\infty}\big(|u'(x_N)|^2 + \chi_{\{u > 0\}}(\bm{x}',x_N)(h - x_N)_+^{2b}\big)\,dx_Nd\bm{x}' = \la^{N-1}\mathcal{I}_h(u).
\end{equation}
Thus
\begin{equation}
\label{infJinfI}
\inf\{\J_h(u) : u \in \K_{\ga}\} \le \la^{N-1}\inf\{\mathcal{I}_h(v) : v \in \K_{\ga,1\text{-d}}\}
\end{equation}
and consequently, to prove \Cref{exofgamma} we must show that for $\ga$ as in (\ref{gammarange}) the inequality above is a strict inequality.

Given $b,m,h > 0$, we let $g_h \colon \RR^+ \to \RR$ be defined by
\begin{equation}
\label{glam}
g_h(t) \coloneqq \frac{m^2}{t} + \frac{h^{2b+1} - (h - \min\{h,t\})^{2b+1}}{2b + 1}.
\end{equation}
Observe that $g_h \in C^1(\RR^+)$. Furthermore, for $t > 0$, we let $v_t \colon \RR^+ \to \RR$ be defined by
\begin{equation}
\label{1dsol}
v_t(s) \coloneqq m\left(1 - \frac{s}{t}\right)_+.
\end{equation}

\begin{thm}
\label{1dpb}
Given $b,m,h,\ga > 0$, let $\mathcal{I}_h$ and $\K_{\ga,1\text{-d}}$ be given as in $(\ref{1d})$ and $(\ref{K1d})$, respectively. Then, if $g_h$ and $v_t$ are given as above and the numbers $h^\#,h^*$ are defined as in (\ref{h*}), we have that
\begin{equation}
\label{I=g}
\inf\{\mathcal{I}_h(v) : v \in \K_{\ga,1\text{-d}}\} = \inf\{g_h(t) : 0 < t < \ga\}.
\end{equation}
Furthermore, the following hold:
\begin{itemize}
\item[$(i)$] if $h \le h^\#$ then $g_h$ is decreasing and $v_{\ga}$ is the only global minimizer of $\mathcal{I}_h$ in the class $\K_{\ga,1\text{-d}}$,
\item[$(ii)$] if $h^\# < h < h^* $ then $g_h$ has two critical points, $t_h,T_h$,
\[
0 < t_h < \frac{h}{b + 1} < T_h < h,
\]
which correspond to a point of local minimum and a point of local maximum of $g_h$, respectively. Moreover, there exists a unique $\tau_h > T_h$ such that $g_h(t_h) = g_h(\tau_h)$. In this case we have that
\begin{itemize}
\item[$(a)$] if $0 < \ga \le t_h$ then $g_h$ is decreasing in $(0,\ga)$ and $v_{\ga}$ is the only global minimizer of $\mathcal{I}_h$ in the class $\K_{\ga,1\text{-d}}$;
\item[$(b)$] if $t_h < \ga < \tau_h$ then $\inf\{\mathcal{I}_h(v) : v \in  \K_{\ga,1\text{-d}}\} = g_h(t_h)$ and $v_{t_h}$ is the only global minimizer of $\mathcal{I}_h$ in the class $\K_{\ga,1\text{-d}}$;
\item[$(c)$] if $\ga = \tau_h$ then $\inf\{\mathcal{I}_h(v) : v \in  \K_{\ga,1\text{-d}}\} = g_h(t_h) = g_h(\tau_h)$ and $v_{t_h},v_{\tau_h}$ are the only global minimizers of $\mathcal{I}_h$ in the class $\K_{\ga,1\text{-d}}$;
\item[$(d)$] if $\ga > \tau_h$ then $\inf\{\mathcal{I}_h(v) : v \in  \K_{\ga,1\text{-d}}\} = g_h(\ga)$ and $v_{\ga}$ is the only global minimizer of $\mathcal{I}_h$ in the class $\K_{\ga,1\text{-d}}$;
\end{itemize}
\item[$(iii)$] if $h \ge h^*$ then $t_h$ is a point of absolute minimum for $g_h$. Moreover, $v_{\ga}$ is the only global minimizer of $\mathcal{I}_h$ in the class $\K_{\ga,1\text{-d}}$ if $0 < \ga \le t_h$, while if $t_h < \ga$ then the only global minimizer is given by $v_{t_h}$.
\end{itemize}
\end{thm}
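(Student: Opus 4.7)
The plan is to reduce the minimization of $\mathcal{I}_h$ to the one-variable optimization of $g_h$ via a sharp ansatz-based lower bound, and then to carry out a careful shape analysis of $g_h$. I proceed in three stages.

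\textbf{Reduction to a scalar minimization.} A direct computation on $v_t$ yields $\int_0^\infty |v_t'|^2\,ds = m^2/t$ and $\int_0^\infty \chi_{\{v_t>0\}}(h-s)_+^{2b}\,ds = (h^{2b+1} - (h-\min\{t,h\})^{2b+1})/(2b+1)$, so $\mathcal{I}_h(v_t) = g_h(t)$ and in particular $\inf \mathcal{I}_h \le \inf_{0 < t \le \gamma} g_h(t)$. For the matching lower bound I fix any admissible $v$ with $\mathcal{I}_h(v) < \infty$, pass to its continuous representative, and set
\[
t^* \coloneqq \min\{s \in (0,\gamma] : v(s) = 0\},
\]
which exists because $v(0) = m > 0$ and $v(\gamma) = 0$. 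The Cauchy--Schwarz inequality gives $\int_0^{t^*}|v'|^2 \ge m^2/t^*$, while the constant sign $v > 0$ on $(0,t^*)$ produces $\int_0^\infty \chi_{\{v>0\}}(h-s)_+^{2b}\,ds \ge \int_0^{\min\{t^*,h\}}(h-s)^{2b}\,ds$. Adding these two and discarding the nonnegative tail integrals on $(t^*,\infty)$ yields $\mathcal{I}_h(v) \ge g_h(t^*)$, which proves (\ref{I=g}). The equality cases force $v$ to be affine on $(0,t^*)$ with the correct boundary values and $v' \equiv 0$ a.e.\@ on $(t^*,\infty)$; combined with continuity and $v(t^*) = 0$ this identifies $v$ with $v_{t^*}$, so every global minimizer must be of the form $v_{t^*}$ where $t^*$ realizes the infimum of $g_h$ on $(0,\gamma]$.

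\textbf{Shape of $g_h$.} A differentiation gives $g_h'(t) = -m^2/t^2 + (h-t)^{2b}$ on $(0,h)$ and $g_h'(t) = -m^2/t^2 < 0$ on $(h,\infty)$, with $g_h(t) \to h^{2b+1}/(2b+1)$ as $t\to\infty$. Critical points in $(0,h)$ solve $t(h-t)^b = m$, and the auxiliary function $q(t) \coloneqq t(h-t)^b$ on $[0,h]$ is strictly unimodal with maximum $h^{b+1}b^b/(b+1)^{b+1}$ at $t = h/(b+1)$. Comparing with $m$ distinguishes the cases: $q < m$ on $(0,h)$ (with a single tangency at $h^{\#}$) when $h \le h^{\#}$, so $g_h' \le 0$ and $g_h$ is strictly decreasing; whereas when $h > h^{\#}$ we have two critical points $0 < t_h < h/(b+1) < T_h < h$ with $g_h$ decreasing, then increasing, then decreasing on the resulting intervals of $(0,\infty)$. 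In the second regime, a second root $\tau_h \in (T_h,\infty)$ of $g_h(t) = g_h(t_h)$ exists iff $g_h(t_h) > h^{2b+1}/(2b+1)$; substituting the critical-point relation $m^2 = t_h^2(h-t_h)^{2b}$ into $g_h(t_h)$ reduces this inequality to $(2b+1)t_h < h - t_h$, i.e.\@ to $t_h < h/(2b+2)$, and solving the borderline case $t_h = h/(2b+2)$ jointly with $m = t_h(h-t_h)^b$ yields exactly $h = h^*$. Uniqueness of $\tau_h$ is then immediate from the strict monotonicity of $g_h$ on $(T_h,\infty)$.

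\textbf{Case analysis.} Cases (i)--(iii) follow by locating $\gamma$ with respect to $t_h$ and $\tau_h$ and reading off the argmin of $g_h$ on $(0,\gamma]$. In case (i), $g_h$ is strictly decreasing, so the unique argmin is $\gamma$ and the minimizer is $v_\gamma$. In case (ii), subcases (a)--(d) follow by the monotonicity intervals: for $\gamma \le t_h$ the function $g_h$ is decreasing on $(0,\gamma]$; for $t_h < \gamma < \tau_h$ the strict inequality $g_h > g_h(t_h)$ on $(0,t_h) \cup (t_h,T_h] \cup (T_h,\tau_h)$ makes $t_h$ the unique argmin; at $\gamma = \tau_h$ the equality $g_h(t_h) = g_h(\tau_h)$ produces the pair $v_{t_h}, v_{\tau_h}$; and for $\gamma > \tau_h$ the strict decrease of $g_h$ past $\tau_h$ gives $g_h(\gamma) < g_h(t_h)$, so the argmin is $\gamma$. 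In case (iii), the inequality $g_h(t_h) \le h^{2b+1}/(2b+1) < g_h(t)$ on $(T_h,\infty)$ shows that $t_h$ is the global minimum of $g_h$ on $(0,\infty)$, so the argmin on $(0,\gamma]$ is $t_h$ or $\gamma$ according to whether $\gamma > t_h$ or $\gamma \le t_h$. Uniqueness of the minimizer of $\mathcal{I}_h$ in every case except (ii)(c) then follows from the rigidity established in the reduction step.

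The main obstacle is the identification of $h^*$ and the existence and uniqueness of $\tau_h$ in the range $h^{\#} < h < h^*$, which is where the explicit relation between the critical-point condition $m = t_h(h-t_h)^b$ and the endpoint value $g_h(t_h) = h^{2b+1}/(2b+1)$ must be worked out; everything else is standard one-dimensional Sobolev bookkeeping and monotonicity reading on the three intervals cut out by $t_h$ and $T_h$.
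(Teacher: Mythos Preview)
Your reduction in the first stage is correct and in fact cleaner than the paper's: you use a direct Cauchy--Schwarz lower bound to show $\mathcal{I}_h(v)\ge g_h(t^*)$ and read off the equality cases, whereas the paper invokes the direct method and then the harmonicity of minimizers in $\{v>0\}$ (imported from the higher-dimensional Theorem~\ref{exofmin}) to conclude that any minimizer is piecewise affine. Your argument is self-contained and yields the uniqueness statements immediately.

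There is, however, a sign slip in your shape analysis. Substituting $m^2=t_h^2(h-t_h)^{2b}$ into $g_h(t_h)-h^{2b+1}/(2b+1)$ gives
\[
g_h(t_h)-\frac{h^{2b+1}}{2b+1}
= t_h(h-t_h)^{2b}-\frac{(h-t_h)^{2b+1}}{2b+1}
= \frac{(h-t_h)^{2b}}{2b+1}\bigl((2b+2)t_h-h\bigr),
\]
so $\tau_h$ exists (i.e., $g_h(t_h)>h^{2b+1}/(2b+1)$) if and only if $t_h>h/(2b+2)$, not $t_h<h/(2b+2)$. With the sign as you wrote it, the argument would assign the existence of $\tau_h$ to the wrong side of $h^*$.

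Even after correcting the sign there is a residual gap: you only identify the \emph{borderline} $h=h^*$ (where $t_h=h/(2b+2)$) but do not justify that the inequality $t_h>h/(2b+2)$ holds precisely on $(h^\#,h^*)$ and fails on $[h^*,\infty)$. This requires knowing how $t_h$ moves with $h$, which you have not established. The paper sidesteps this by rewriting the condition $g_h(t_h)\le h^{2b+1}/(2b+1)$ as $m^2(2b+1)\le\sup_{0<t<h}t(h-t)^{2b+1}$, a quantity manifestly increasing in $h$, so the threshold $h^*$ and the sidedness follow at once. Alternatively, one can check directly that $t_h'<0$ by implicit differentiation of $m=t_h(h-t_h)^b$, so that $h\mapsto t_h-h/(2b+2)$ is strictly decreasing and vanishes only at $h=h^*$; this closes your argument.
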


\begin{rmk}
\label{gaequiv}
Notice that $\ga$ is given as in $(\ref{gammarange})$ if and only if the following two conditions are simultaneously satisfied:
\begin{itemize}
\item[$(i)$] $g_h'(\ga) < 0$;
\item[$(ii)$] $\inf\{\mathcal{I}_h(v) : v \in  \K_{\ga,1\text{-d}}\} = g_h(\ga)$.
\end{itemize}
\end{rmk}

\begin{proof}[Proof of \Cref{1dpb}] We divide the proof into several steps. 
\newline
\textbf{Step 1:} By the direct method in the calculus of variations we have that there exists a global minimizer $v$ of $\mathcal{I}_h$ in $\K_{\ga,1\text{-d}}$. We claim that $v$ is linear on $\{v > 0\}$. Indeed, the minimality of $v$ implies that the set $\{v > 0\}$ is connected and the claim readily follows recalling that $v$ is harmonic in $\{v > 0\}$ (see \Cref{exofmin}). In turn, $v$ must be of the form $v = v_t$ for some $0 < t \le \ga$ and so (\ref{I=g}) follows upon noticing that 
\begin{equation}
\label{I=gpf}
\mathcal{I}_h(v_t) = g_h(t).
\end{equation}
Thus, it remains to study $\inf \{g_h(t) : 0 < t < \ga\}$.
\newline
\textbf{Step 2:} Since
\[
g'_h(t) = 
\left\{
\arraycolsep=1.4pt\def\arraystretch{2.3}
\begin{array}{ll}
\displaystyle -\frac{m^2}{t^2} + (h - t)^{2b} & \text{ if } t \le h, \\
\displaystyle -\frac{m^2}{t^2}  & \text{ if } t > h,
\end{array}
\right.
\]
we have that $g'_h(t) < 0$ if $t \ge h$. Moreover, $g'_h(t) \le 0$ for $t < h$ if and only if
\begin{equation}
\label{nolocmin}
\psi_h(t) \coloneqq - m^2 + t^2(h - t)^{2b} \le 0.
\end{equation}
Since $\psi_h$ has a global maximum in $(0,h)$ at the point $t = h/(b + 1)$, it follows that
\begin{equation}
\label{nolocmin2}
\psi_h(h/(b + 1)) = -m^2 + \frac{b^{2b}}{(b + 1)^{2b + 2}}h^{2b + 2} \le 0
\end{equation}
if and only if $h \le h^\#$, where $h^\#$ is the number given in $(\ref{h*})_1$. Consequently, if $h \le h^\#$ then $g_h$ is decreasing and so
\[
\inf\{g_h(t) : 0 < t < \ga\} = g_h(\ga),
\]
which, together with (\ref{I=g}) and (\ref{I=gpf}), shows that $v_{\ga}$ is the only global minimizer of $\mathcal{I}_h$ in the class $\K_{\ga,1\text{-d}}$.
\newline
\textbf{Step 3:} If $h > h^\#$ then, in view of (\ref{nolocmin}), (\ref{nolocmin2}), and the fact that $\psi_h$ has a single critical point in $(0,h)$, there exist 
\[
0 < t_h < \frac{h}{b + 1} < T_h < h
\]
such that $g_h$ strictly decreases in $(0,t_h)$ and in $(T_h, \infty)$, and strictly increases in $(t_h,T_h)$.
It follows that 
\begin{equation}
\label{(IV)}
\inf \{g_h(t) : 0 < t < \ga\} = 
\left\{
\arraycolsep=1.4pt\def\arraystretch{1.6}
\begin{array}{ll}
g_h(\ga) & \text{ if } 0 < \ga \le t_h, \\
g_h(t_h)  & \text{ if } t_h < \ga \le T_h, \\
\min \{g_h(t_h), g_h(\ga)\} & \text{ if } \ga > T_h.
\end{array}
\right.
\end{equation}
Hence, in what follows, it remains to treat the case $\ga > T_h$. Notice that
\begin{equation}
\label{(V)}
\inf \{g_h(t) : 0 < t < \ga\} = g_h(t_h) \le \lim_{t \to \infty}g_h(t) = \frac{h^{2b + 1}}{2b + 1}
\end{equation}
if and only if 
\[
m^2(2b+1) \le \sup \{f_h(t) : 0 < t < h\},
\]
where $f_h(t) \coloneqq t(h - t)^{2b + 1}$. The function $f_h$ has a maximum at $t = h/(2b+2)$, and so the previous condition reduces to
\[
m^2(2b + 1) \le f_h(h/(2b+2)),
\]
or equivalently $h \ge h^*$, where $h^*$ is the number given in $(\ref{h*})_2$.
Hence, it follows from (\ref{(V)}) that if $h \ge h^*$ then $g_h(t_h) < g_h(\ga)$, which, by (\ref{I=g}), (\ref{I=gpf}), and (\ref{(IV)}), proves $(iii)$. On the other hand, if $h^{\#} < h < h^*$, then by (\ref{(V)}) there exists $\tau_h > T_h$ such that $g_h(t_h) = g_h(\tau_h)$. 

Properties $(a),(b),(c),(d)$ now follow again by (\ref{I=g}),(\ref{I=gpf}), and (\ref{(IV)}).
\end{proof}

\begin{cor}
\label{thThtauh}
Let $t_h$, $T_h$, and $\tau_h$ be defined as in \Cref{1dpb}. Then, seen as functions of the variable $h$, $t_h$ is decreasing, $T_h$ is increasing, and $\tau_h$ is increasing.
\end{cor}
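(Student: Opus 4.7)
The plan is to obtain all three monotonicity statements from the implicit function theorem, since $t_h$, $T_h$, and $\tau_h$ are defined as roots of equations whose pertinent derivatives do not vanish, so the real work reduces to tracking signs.

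For $t_h$ and $T_h$, I would recall from the proof of \Cref{1dpb} that these are the two zeros of $\psi_h(t) \coloneqq t^2(h-t)^{2b} - m^2$ in $(0,h)$, separated by the unique critical point $h/(b+1)$ of $\psi_h$ on this interval, so that $t_h < h/(b+1) < T_h$. A direct computation gives
\[
\psi_h'(t) = 2t(h-t)^{2b-1}\bigl[h - (b+1)t\bigr], \qquad \partial_h\psi_h(t) = 2bt^2(h-t)^{2b-1}.
\]
Thus $\partial_h\psi_h > 0$ on $(0,h)$, while $\psi_h'(t_h) > 0$ and $\psi_h'(T_h) < 0$ by the position of the two roots relative to $h/(b+1)$. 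The implicit function theorem then yields $dt_h/dh = -\partial_h\psi_h(t_h)/\psi_h'(t_h) < 0$ and, analogously, $dT_h/dh > 0$.

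For $\tau_h$, valid only on $h \in (h^\#, h^*)$, I would differentiate the defining relation $g_h(\tau_h) = g_h(t_h)$ in $h$. The crucial simplification is that $t_h$ is a critical point of $g_h$, so $g_h'(t_h) = 0$ and the chain rule on the left collapses to the partial $(\partial_h g_h)(t_h)$. One is then led to
\[
g_h'(\tau_h)\,\frac{d\tau_h}{dh} = (\partial_h g_h)(t_h) - (\partial_h g_h)(\tau_h).
\]
A direct computation shows $(\partial_h g_h)(t) = h^{2b} - \bigl((h-t)_+\bigr)^{2b}$, so that the right-hand side equals $\bigl((h-\tau_h)_+\bigr)^{2b} - (h-t_h)^{2b}$; this is strictly negative because $t_h < \tau_h$ and $t_h < h$. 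Since $\tau_h > T_h$ lies in the region where $g_h$ is strictly decreasing, $g_h'(\tau_h) < 0$, and dividing through gives $d\tau_h/dh > 0$.

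The only mildly delicate point is the piecewise nature of $g_h$ at $t = h$, which in principle forces a case distinction for $\partial_h g_h(\tau_h)$ depending on whether $\tau_h$ lies below or above $h$. Writing both cases uniformly via the positive part $(h-t)_+$ absorbs this dichotomy and keeps the sign analysis to a single line. The $C^1$ regularity of $g_h$ noted right after $(\ref{glam})$ ensures that the implicit function theorem applies without further issue, even in the borderline case $\tau_h = h$ where $g_h'(h) = -m^2/h^2 < 0$.
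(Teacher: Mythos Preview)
Your proposal is correct and follows essentially the same route as the paper: implicit function theorem for $t_h$ and $T_h$ (you differentiate $\psi_h$ while the paper differentiates the equivalent relation $g_h'=0$, leading to identical sign conclusions), then differentiate $g_h(\tau_h)=g_h(t_h)$ in $h$ and exploit $g_h'(t_h)=0$ to collapse the $t_h'$ term. The one cosmetic difference is that the paper carries out the $\tau_h$ computation assuming $\tau_h<h$ and then dismisses the case $\tau_h\ge h$ as ``similar but simpler'', whereas you absorb both cases into the single expression $((h-\tau_h)_+)^{2b}$; this is a tidy improvement but not a different idea.
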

\begin{proof}
By the implicit function theorem we have that the maps $h \mapsto t_h$, $h \mapsto T_h$, and $h \mapsto \tau_h$ are differentiable, and we write $t_h'$, $T_h'$, and $\tau_h'$ to denote the derivatives. In particular, we see that for $h > h^{\#}$
\begin{align*}
t_h' & = -\frac{bt_h}{h - (b + 1)t_h} < 0, \\
T_h' & = -\frac{bT_h}{h - (b + 1)T_h} > 0.
\end{align*}
To prove the statement about $\tau_h$, we first assume that $\tau_h < h$. Recall that $\tau_h$ is defined through the identity
\[
\frac{m^2}{t_h} - \frac{(h - t_h)^{2b + 1}}{2b + 1} = \frac{m^2}{\tau_h} - \frac{(h - \tau_h)^{2b + 1}}{2b + 1};
\] 
differentiating both sides with respect to $h$ yields
\begin{equation}
\label{tauh'}
\frac{m^2t_h'}{t_h^2} + (h - t_h)^{2b}(1 - t_h') = \frac{m^2\tau_h'}{\tau_h^2} + (h - \tau_h)^{2b}(1 - \tau_h').
\end{equation}
The definition of $t_h$ can now be used to simplify the left-hand side of (\ref{tauh'}):
\[
\frac{m^2t_h'}{t_h^2} + (h - t_h)^{2b}(1 - t_h') = t_h'\left(\frac{m^2}{t_h^2} - (h - t_h)^{2b}\right) + h - t_h = h - t_h.
\]
Therefore we can rewrite (\ref{tauh'}) as
\[
\left(\frac{m^2}{\tau_h^2} - (h - \tau_h)^{2b}\right)\tau_h' = \tau_h - t_h,
\]
and the conclusion follows recalling that $t_h < \tau_h$ and $m^2 - \tau_h^2(h - \tau_h)^{2b} > 0$. The proof for the case $\tau_h \ge h$ is similar but simpler, therefore we omit it.
\end{proof}

\section{Existence of nontrivial minimizers} 
\label{nontriv}
In this section we present the proof of \Cref{exofgamma}.

\begin{proof}[Proof of \Cref{exofgamma}]
Let $\ga$ be as in (\ref{gammarange}). In view of (\ref{tonelliJ}), (\ref{infJinfI}), and condition $(ii)$ in \Cref{gaequiv}, it is enough to exhibit a function $w \in \K_{\ga}$ with the property that
\begin{equation}
\label{ntsga}
\J_h(w) < \la^{N - 1}g_h(\ga).
\end{equation}
Let $\delta$ be a positive real number, which we choose later, and define $\mathcal{S}$ to be the convex hull of $\R \times \{\ga\}$ with the point $\{(\bm{0},\ga + \delta)\}$, that is, the pyramid with base $\R \times \{\ga\}$ and vertex $\{(\bm{0},\ga + \delta)\}$. Define $\tilde{f} \colon \R \to \RR$ via
\[
\tilde{f}(\bm{x}') \coloneqq \sup\{t : (\bm{x}',t) \in \mathcal{S}\},
\]
and let $f$ be the periodic extension of $\tilde{f}$ to $\RR^{N - 1}$. We can then define 
\[
w(\bm{x}) \coloneqq m\left(1 - \frac{x_N}{f(\bm{x}')}\right)_+, \quad \bm{x} = (\bm{x}',x_N) \in \RR^N_+.
\]
The function $w$ defined as above belongs to class $\K_{\ga}$; furthermore, we claim that if $\delta$ is chosen sufficiently small, then $w$ satisfies (\ref{ntsga}). The proof of the claim is divided into several steps.
\newline
\textbf{Step 1: }In this step we study the asymptotic behavior of $\J_h(w)$ as $\delta \to 0^+$ with first-order accuracy. We do so by first noticing that if $\bm{x} \in \Omega$ is such that 
\begin{equation}
\label{xixj}
|x_i| \ge |x_j| \quad \text{ for some } i \in \{1, \dots, N - 1\} \text{ and every } j \le N - 1,
\end{equation}
then $w$ can be rewritten as follows:
\[
w(\bm{x}) = m\left(1 - \frac{x_N \la}{(\ga + \delta)\la - 2\delta|x_i|}\right)_+.
\]
In turn, if we denote by $\Omega_1$ the subset of $\Omega$ such that $x_j \ge 0$ for every $j$ and condition (\ref{xixj}) is satisfied for $i = 1$, we find that
\begin{equation}
\label{2n}
\int_{\Omega}|\nabla w(\bm{x})|^2\,d\bm{x} = 2^{N - 1}(N - 1)\int_{\Omega_1}|\nabla w(\bm{x})|^2\,d\bm{x}.
\end{equation}
Notice that $w$ restricted to $\Omega_1$ depends only on the variables $x_1$ and $x_N$; moreover, for $\mathcal{L}^{N}$-a.e. $\bm{x} \in \Omega_1$, we have that
\begin{align}
\frac{\partial w}{\partial x_1}(\bm{x}) & = 
\left\{
\arraycolsep=1.4pt\def\arraystretch{2.3}
\begin{array}{ll}
\displaystyle \frac{-2 m \la \delta x_N}{\left((\ga + \delta)\la - 2\delta x_1\right)^2} & \text{ if } x_N < \ga + \delta - \frac{2}{\la}\delta x_1, \\
\displaystyle 0  & \text{ otherwise, }
\end{array}
\right. \label{der1}
\\
\frac{\partial w}{\partial x_N}(\bm{x}) & = 
\left\{
\arraycolsep=1.4pt\def\arraystretch{2.3}
\begin{array}{ll}
\displaystyle \frac{-m\la}{(\ga + \delta)\la - 2\delta x_1} & \text{ if } x_N < \ga + \delta - \frac{2}{\la}\delta x_1, \\
\displaystyle 0  & \text{ otherwise. }
\end{array}
\right. \label{der2}
\end{align}
In view of (\ref{der1}) and (\ref{der2}), and by means of a direct direction computation, we see that
\begin{equation}
\label{intO1}
\int_{\Omega_1}|\nabla w(\bm{x})|^2\,d\bm{x} = m^2\left(\frac{4\delta^2}{3\la} + \la\right) \int_0^{\la/2}F(x_1, \delta)\,dx_1,
\end{equation}
where
\[
F(x_1, \delta) \coloneqq \frac{x_1^{N - 2}}{(\ga + \delta)\la - 2\delta x_1}.
\]
Notice that $F(x_1, \cdot)$ is a smooth function in a neighborhood of the origin, and that its first-order Taylor approximation centered at zero is given by
\[
F(x_1, \delta) = \frac{x_1^{N - 2}}{\ga \la} - \frac{x_1^{N - 2}(\la - 2x_1)}{\ga^2 \la^2}\delta + \mathcal{O}(\delta^2).
\]
Substituting the previous expansion into (\ref{intO1}) and combining the result with (\ref{2n}) we obtain that
\begin{align}
\label{graddelta}
\int_{\Omega}|\nabla w(\bm{x})|^2\,d\bm{x} & = 2^{N - 1}(N - 1)m^2\la \int_0^{\la/2}\left(\frac{x_1^{N - 2}}{\ga \la} - \frac{x_1^{N - 2}(\la - 2x_1)}{\ga^2 \la^2}\delta\right)\,dx_1 + \mathcal{O}(\delta^2) \notag \\
& = \frac{\la^{N - 1}m^2}{\ga} - \frac{\la^{N - 1}m^2}{N\ga^2}\delta + \mathcal{O}(\delta^2).
\end{align}
To compute the contribution coming from the area-term we distinguish between two cases. Indeed, if we assume that $h \le \ga$, we have that
\begin{equation}
\label{areah<g}
\int_{\Omega}\chi_{\{w > 0\}}(\bm{x})(h - x_N)^{2b}_+\,d\bm{x} = \la^{N - 1}\int_0^h(h - x_N)^{2b}\,dx_N = \la^{N - 1}\frac{h^{2b + 1}}{2b + 1}.
\end{equation}
On the other hand, if $\ga < h$ we take $\delta$ so small that $\ga + \delta \le h$. Then, reasoning as in (\ref{2n}), we have that
\begin{align}
\label{areag<h}
\int_{\Omega}\chi_{\{w > 0\}}(\bm{x})(h - x_N)^{2b}_+\,d\bm{x} & = 2^{N - 1}(N - 1)\int_{\Omega_1}\chi_{\{w > 0\}}(\bm{x})(h - x_N)^{2b}_+\,d\bm{x} \notag \\
& = \la^{N - 1}\frac{h^{2b + 1}}{2b + 1} - \frac{2^{N - 1}(N - 1)}{2b + 1}\int_0^{\la/2}G(x_1, \delta)\,dx_1,
\end{align}
where
\[
G(x_1, \delta) \coloneqq x_1^{N - 2}\left(h - \ga - \delta + \frac{2}{\la}\delta x_1\right)^{2b + 1}.
\]
Similarly to above, we consider the first-order Taylor approximation centered at zero for $G(x_1, \cdot)$, i.e.,
\[
G(x_1, \delta) = x_1^{N - 2}(h - \ga)^{2b + 1} + (2b + 1)x_1^{N - 2}(h - \ga)^{2b}\left(\frac{2}{\la}x_1 - 1\right)\delta + \mathcal{O}(\delta^2),
\]
and we substitute this expression into (\ref{areag<h}); by doing so we obtain
\begin{equation}
\label{ag<h}
\int_{\Omega}\chi_{\{w > 0\}}(\bm{x})(h - x_N)^{2b}_+\,d\bm{x} = \la^{N - 1}\left(\frac{h^{2b + 1}}{2b + 1} - \frac{(h - \ga)^{2b + 1}}{2b + 1} + \frac{(h - \ga)^{2b}}{N}\delta\right) + \mathcal{O}(\delta^2).
\end{equation}
\textbf{Step 2: }In this step we show that (\ref{ntsga}) is satisfied when $h \le \ga$. Indeed, recalling that $g_h$ is the function defined in (\ref{glam}), by (\ref{I=g}), (\ref{gammarange}), and \Cref{gaequiv} $(ii)$ we see that (\ref{ntsga}) is equivalent to 
\[
\J_h(w) < \la^{N - 1}g_h(\ga) = \la^{N - 1}\left(\frac{m^2}{\ga} + \frac{h^{2b+1}}{2b + 1}\right).
\]
To conclude, it is enough to notice that by (\ref{graddelta}) and (\ref{areah<g}) the previous condition reduces to
\[
- \frac{\la^{N - 1}m^2}{N\ga^2} + \mathcal{O}(\delta) < 0,
\]
which is satisfied provided $\delta$ is sufficiently small.
\newline
\textbf{Step 3: }In this final step we deal with the more delicate case in which $\ga < h$. Reasoning as above, we use (\ref{glam}), (\ref{graddelta}), and (\ref{areag<h}) to rewrite (\ref{ntsga}). By (\ref{gammarange}), (\ref{I=g}), and \Cref{gaequiv} $(ii)$ we have that (\ref{ntsga1}) reduces to
\[
\J_h(w) < \la^{N - 1}g_h(\ga) = \la^{N - 1} \left( \frac{m^2}{\ga} + \frac{h^{2b+1} - (h - \ga)^{2b+1}}{2b + 1}\right).
\]
Simplifying the terms that appear on both sides we are left to verify the following inequality:
\begin{equation}
\label{ntsga1}
- \frac{\la^{N - 1}m^2}{N\ga^2}\delta + \la^{N - 1}\frac{(h - \ga)^{2b}}{N}\delta + \mathcal{O}(\delta^2) < 0.
\end{equation}
Notice that the left-hand side of (\ref{ntsga1}) can be rewritten as
\[
\frac{\la^{N - 1}\delta}{N}\left(-\frac{m^2}{\ga^2} + (h - \ga)^{2b}\right) + \mathcal{O}(\delta^2) = \frac{\la^{N - 1}\delta}{N}g_h'(\ga) + \mathcal{O}(\delta^2).
\]
The desired inequality (\ref{ntsga1}), and therefore (\ref{ntsga}), follows from \Cref{gaequiv} $(i)$, provided $\delta$ is sufficiently small. This concludes the proof.
\end{proof}

\begin{rmk}
The result of \Cref{exofgamma} is optimal for $h < h^{\#}$ and $h \ge h^*$. However, it is still unclear whether the result could be improved for $h^{\#} \le h < h^*$.
\end{rmk}

\begin{figure}[h]
\centering
\begin{tikzpicture}[blend group=normal, scale=.7]

\draw [->] (-0.5,0) -- (12,0);
\draw [->] (0,-0.5) -- (0,8.1);
\draw [dashed] (3.5,0) -- (3.5,8);
\draw [dashed] (8,0) -- (8,8);

\draw [dashed] (0,2.5) -- (3.5,2.5);
\draw [dashed] (0,1.035) -- (8,1.035);

\draw (3.5,2.5) to [out=340,in=175] (11.9,0.4);
\draw (3.5,2.5) to [out=35,in=270] (7.97,8);

\draw [fill=black, fill opacity=0.2, draw opacity=0] (0,0) -- (3.5,0) -- (3.5,2.5) to [out=35,in=270] (7.97,8) -- (0,8);
\draw [fill=black, fill opacity=0.2, draw opacity=0] (3.5,0) -- (3.5,2.5) -- (3.5,2.5) to [out=340,in=175] (11.9,0.4) -- (11.9,0);
\draw [fill=black, fill opacity=0.2, draw opacity=0, pattern=north west lines] (3.5,2.5) to [out=340,in=165] (8,1.035) -- (8,8) -- (7.97,8) to [out=270,in=35] (3.5,2.5);

\node [below] at (3.5,0) {$h^{\#}$};
\node [below] at (8,0) {$h^*$};
\node [below] at (11.8,0) {$h$};
\node [left] at (0,7.8) {$\ga$};
\node [left] at (0,2.5) {$h^{\#}/(b + 1)$};
\node [left] at (0,1.035) {$h^*/(2b + 2)$};
\node [above] at (10,0.8) {$t_h$};
\node [above] at (6.5,5.5) {$\tau_h$};
\node [above] at (6.5,2.8) {?};
\end{tikzpicture}
\end{figure}

\begin{rmk}
We report here the explicit values of $t_h$ and $T_h$ for the case $b = 1/2$. As previously mentioned in the introduction to this paper, this case is of particular interest when $N = 2$ since it corresponds to Bernoulli-type free boundary problems related to water waves. For $0 < t < h$,
\[
g'_h(t) = -\frac{m^{2}}{t^{2}} + h - t.
\]
If $h > h^\#$, the cubic equation $t^{3} - h t^{2} + m^{2} = 0$ has three real solutions, two of which are positive. Setting
\[
\theta \coloneqq \arccos\left(  1-\frac{3^{3}}{2}\frac{m^{2}}{h^{3}}\right)
\]
so that $0<\theta<\pi$, the two positive solutions are given by
\begin{align*}
t_h  & :=\frac{2h}{3}\cos\frac{\theta+4\pi}{3}+\frac{h}
{3}\in\left(  0,\frac{2h}{3}\right) , \\ 
T_h  & :=\frac{2h}{3}\cos\frac{\theta}{3}+\frac{h}{3}
\in\left(  \frac{2h}{3},h\right).
\end{align*}
We also know that
\[
t_h < 2^{1/3}m^{2/3} < T_h.
\]
Indeed, for every $\eta \in (0, h - h^\#)$ \Cref{thThtauh} implies that
\[
t_h < t_{h - \eta} < \frac{2}{3}(h - \eta) < T_{h - \eta} < T_h.
\]
To conclude, let $\eta \to h - h^\#$.
\end{rmk}

Notice that \Cref{exofgamma} doesn't a priori exclude the existence of minimizers with a flat free boundary, i.e., minimizers whose free boundaries coincide with a horizontal hyperplane. The issue is addressed by the following corollary. 
\begin{cor}
\label{nolines}
For $\ga$ given as in $(\ref{gammarange})$, let $u \in \K_{\ga}$ be a global minimizer of $\J_h$. Then the free boundary $\partial \{u > 0\}$ does not coincide with a hyperplane of the form $\{x_N = k\}$, for some $k > 0$.
\end{cor}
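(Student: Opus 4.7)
The plan is to argue by contradiction: suppose $u \in \K_{\ga}$ is a global minimizer of $\J_h$ whose free boundary in $\Omega$ coincides with a hyperplane $\{x_N = k\}$ for some $k > 0$. The first step is to identify the positivity set of $u$. Since $u(\bm{x}',0) \equiv m > 0$, continuity forces $u > 0$ in a neighborhood of $\R \times \{0\}$, and the hypothesis that the only free boundary in $\Omega$ is $\{x_N = k\}$ forces $\R \times (0, k) \subseteq \{u > 0\}$. Above the hyperplane, applying the maximum principle to the nonnegative subharmonic function $u$ in $\R \times (k, \infty)$ with vanishing data on $\R \times \{k\}$ and on $\partial \R \times (\max\{k, \ga\}, \infty)$ rules out any nontrivial positivity (the remaining piece $\partial \R \times (k, \ga)$, if nonempty, carries periodic data), so $\{u > 0\} \cap \Omega = \R \times (0, k)$.

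The heart of the argument then exploits \Cref{exofmin}: part $(iii)$ gives harmonicity of $u$ on $\R \times (0, k)$, and part $(v)$ ensures that on the smooth free boundary $\R \times \{k\}$ the condition $|\nabla u| = (h - k)_+^b$ is satisfied pointwise. Because $u = 0$ on this hyperplane with $u > 0$ just below, the tangential derivatives vanish and $\nabla u \equiv -(h - k)_+^b \bm{e}_N$ along the free boundary. Setting
\[
v(\bm{x}) \coloneqq u(\bm{x}) + (h - k)_+^b (x_N - k),
\]
one obtains a harmonic function on $\R \times (0, k)$ with vanishing Cauchy data on $\R \times \{k\}$. Since the hyperplane is non-characteristic for $\Delta$, Holmgren's uniqueness theorem---or, equivalently, odd reflection of $v$ across $\{x_N = k\}$ followed by unique continuation for the resulting analytic harmonic extension---forces $v \equiv 0$. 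Hence $u(\bm{x}) = (h - k)_+^b (k - x_N)$ throughout $\R \times (0, k)$, displaying $u$ as a function of $x_N$ alone, which directly contradicts \Cref{exofgamma} because $\ga$ lies in the range $(\ref{gammarange})$.

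The step I expect to be the most delicate is obtaining the pointwise free boundary condition up to (but not past) the fixed lateral boundary $\partial \R \times \{k\}$, since \Cref{exofmin}$(v)$ gives this information only on compact subsets of $\R \times (0, h)$. My plan is to run the Holmgren argument first on an interior neighborhood of $\R \times \{k\}$ compactly contained in $\R \times (0, h)$ to conclude $v \equiv 0$ there, and then propagate $v \equiv 0$ to the full slab via unique continuation for harmonic functions in the connected open set $\R \times (0, k)$. The degenerate case $k \ge h$ is in fact easier: there $(h - k)_+^b = 0$, so $u$ itself has vanishing Cauchy data on $\R \times \{k\}$, and the same argument yields $u \equiv 0$ in $\R \times (0, k)$, contradicting $u(\cdot, 0) \equiv m$ without the need for \Cref{exofgamma}.
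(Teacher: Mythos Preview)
Your approach is correct and genuinely different from the paper's. The paper splits into two cases: for $k \le \ga$ it uses Jensen's inequality to show that the one-dimensional competitor $v(\bm{x}) = m(1 - x_N/k)_+$ has no larger energy than $u$, hence is also a minimizer, contradicting \Cref{exofgamma}; for $k > \ga$ it invokes the Chang--Lara--Savin tangential detachment result \cite{MR3916702} to rule out a flat free boundary above $\{x_N = \ga\}$. Your route instead combines the pointwise free boundary condition with Holmgren uniqueness to force $u$ itself to be one-dimensional. This handles $k \le \ga$ and $\ga < k < h$ uniformly and avoids the Chang--Lara--Savin machinery altogether, at the mild cost of using \Cref{exofmin}$(v)$ (which on a flat hyperplane requires only Alt--Caffarelli level input). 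Two small points worth tightening: when $\ga < k < h$, the formula $u = (h - k)^b(k - x_N)$ is positive on $\partial \R \times (\ga, k)$, so the contradiction is directly with membership in $\K_\ga$ rather than with \Cref{exofgamma}; and for $k \ge h$ the hyperplane lies outside the region where \Cref{exofmin}$(v)$ applies, so you should either appeal to the weak condition $(iv)$ (which on a smooth hyperplane still yields $|\nabla u| = 0$) or observe that a minimizer is necessarily harmonic throughout $\R \times (h,\infty)$ since $Q \equiv 0$ there, which together with Hopf's lemma forces $k \le h$.
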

\begin{proof}
Assume for the sake of contradiction that this is not the case; then $k \le h$. Assume first that $0 < k \le \ga$. We claim that 
\[
v(\bm{x}',x_N) = m\left(1 - \frac{x_N}{k}\right)_+
\]
satisfies $\J_h(v) = \J_h(u)$. Notice that since by assumption $k \le \ga$, we have that $v \in \K_{\ga}$. Hence, the claim would imply that $v$ is a global minimizer of $\J_h$, a contradiction to our choice of $\ga$. To prove the claim it is enough to observe that Tonelli's theorem and Jensen's inequality yield
\[
\begin{aligned}
\int_{\Omega}|\nabla u|^2\,d\bm{x} \ge \int_{\R}\int_0^k\left(\partial_{x_N} u\right)^2\,dx_Nd\bm{x}' \ge \int_{\R}\frac{1}{k}\left(\int_0^k \partial_{x_N}u\,dx_N\right)^2\,d\bm{x}' = \frac{\la^{N - 1} m^2}{k} = \int_{\Omega}|\nabla v|^2\,d\bm{x},
\end{aligned}
\]
and that the functions $u$ and $v$ have the same support. On the other hand, since the free boundary detaches tangentially from a smooth portion of the Dirichlet fixed boundary (see Theorem 1.1 in \cite{MR3916702}), we see also that it is not possible for $k$ to be larger than $\ga$, and the result is thus proved.
\end{proof}

\section{Properties of global minimizers}
\label{propertiesglobmin}
The aim of this section is to study qualitative properties of global minimizers of the functional $\J_h$ defined in (\ref{Jper}). In particular, our main interest lies in understanding how the shape of global minimizers is influenced by the parameter $h$. To this end, throughout the rest of this section for every $h > 0$ we make the following choice for the parameter $\ga$: 
\[
\gamma_h \coloneqq \theta(h), \quad \theta \colon \RR_+ \to \RR_+ \text{ non-increasing.}
\]
We then denote with $u_h$ solutions to the minimization problem for $\J_h$ in $\K_{\ga_h}$.

\subsection{Existence of a critical height}
\label{hcritsec}

To prove \Cref{exofhcrit} we begin by showing that $h_{\cri} < \infty$.
\begin{thm}[Existence of solutions with bounded support]
\label{regwaves}
Under the assumptions of \Cref{exofhcrit}, for every $\bar{x}_N  > 0$ such that 
\[
\bar{x}_N \neq \lim_{h \to \infty}\ga_h
\]
there exists $h_0 = h_0(b, m, \bar{x}_N, \la, \theta)$ such that if $h \ge h_0$ then the support of every global minimizer of $\J_h$ in $\K_{\ga_h}$ is contained in the set $\{x_N < h\}$.
\end{thm}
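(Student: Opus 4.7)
My plan is to prove the theorem by contradiction. Suppose there exists a sequence $h_n \to \infty$ and a corresponding sequence of global minimizers $u_n \in \K_{\gamma_{h_n}}$ of $\J_{h_n}$ whose supports $\{u_n > 0\}$ intersect the half-space $\{x_N \ge h_n\}$. The strategy is to combine an upper energy bound coming from the one-dimensional competitors of \Cref{1dpb} with a lower energy bound obtained by vertical slicing, and to show that the two are incompatible for $h_n$ large.

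For the upper bound, since $\theta$ is non-increasing and $t_{h_n} \to 0$ as $h_n \to \infty$ (from the defining relation $t_{h_n}^2(h_n-t_{h_n})^{2b}=m^2$), the one-dimensional competitor $v_{t_{h_n}}$ from \Cref{1dpb} is admissible in $\K_{\gamma_{h_n}}$ for $n$ sufficiently large, yielding $\J_{h_n}(u_n) \le \lambda^{N-1} g_{h_n}(t_{h_n})$; a direct asymptotic analysis shows this upper bound grows only polynomially in $h_n$. For the lower bound, I would slice vertically: for each $\bm{x}' \in \R$, let $\phi_{\bm{x}'}(x_N) := u_n(\bm{x}', x_N)$ and denote by $t_n(\bm{x}')$ its first positive zero. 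A Cauchy--Schwarz argument on $\phi_{\bm{x}'}$ (using $\phi_{\bm{x}'}(0)=m$, $\phi_{\bm{x}'}(t_n(\bm{x}'))=0$), combined with the direct area computation already used in the proof of \Cref{1dpb} and with Fubini's theorem, gives
\[
\J_{h_n}(u_n) \ge \int_{\R} g_{h_n}(t_n(\bm{x}'))\, d\bm{x}'.
\]
By the rearrangement property of \Cref{structure}(iv), if $\{u_n > 0\}$ contains a point $(\bm{y}', y_N)$ with $y_N \ge h_n$, then $(\bm{0}, y_N) \in \{u_n > 0\}$; a maximum-principle argument on the region above any putative intermediate zero (using the lateral Dirichlet data on $\partial \R \times (\gamma_{h_n}, \infty)$, together with subharmonicity and finite energy) rules out zeros of $u_n(\bm{0}, \cdot)$ between $0$ and $y_N$, so that $t_n(\bm{0}) \ge h_n$. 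Consequently, the set $E_n := \{\bm{x}' \in \R : t_n(\bm{x}') \ge h_n\}$ is nonempty, and on $E_n$ one has $g_{h_n}(t_n(\bm{x}')) \ge h_n^{2b+1}/(2b+1)$, which dwarfs the upper bound for $h_n$ large.

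The main technical obstacle, which I expect to constitute the bulk of the proof, is to obtain a quantitative lower bound on $|E_n|$ robust enough to ensure $\int_{\R} g_{h_n}(t_n(\bm{x}')) \, d\bm{x}'$ strictly exceeds the $O(h_n^b)$ upper bound of the first step. A direct appeal to the non-degeneracy estimate of \Cref{competition2} is insufficient here, since its right-hand side degenerates as $x_N \to h_n$ and thus provides no information in the critical regime near $\{x_N = h_n\}$. Instead, I would rely on the monotonicity techniques of Alt--Caffarelli--Friedman (\cite{MR647374}, \cite{MR1009785}) together with the continuous-fit argument from Section~9 of \cite{MR733897}, exactly as advertised in the introduction. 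These tools should force the free boundary of $u_n$ to maintain a positive distance from the critical hyperplane $\{x_N = h_n\}$ whenever it crosses, producing a uniform (or sufficiently slowly decaying) lower bound on $|E_n|$ and thereby completing the contradiction.
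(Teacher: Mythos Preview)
Your proposal has a genuine gap and rests on a misunderstanding of where to apply \Cref{competition2}.

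The gap: you acknowledge that the ``main technical obstacle'' is a quantitative lower bound on $|E_n|$, and you propose to extract it from the Alt--Caffarelli--Friedman monotonicity techniques and the continuous-fit argument of \cite{MR733897}. But you give no concrete mechanism by which those tools would produce a measure estimate for the cross-section $E_n$; neither result is designed to yield such bounds. Without this step your contradiction does not close: the upper bound $\J_{h_n}(u_n)=O(h_n^{b})$ is correct, but knowing only that $E_n\neq\emptyset$ contributes nothing to $\int_{\R} g_{h_n}(t_n(\bm{x}'))\,d\bm{x}'$. Your appeal to \Cref{structure}(iv) is also problematic: that statement concerns only the extremal minimizers $u_h^{\pm}$, not an arbitrary minimizer, and in the paper's logical order it is proved after the present theorem.

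The misunderstanding: you dismiss \Cref{competition2} because ``its right-hand side degenerates as $x_N\to h_n$.'' But the hypothesis $\bar{x}_N\neq\lim_{h\to\infty}\gamma_h$ is there precisely so that one can apply \Cref{competition2} at the \emph{fixed} height $\bar{x}_N$, on balls $B_r(\bm{x}',\bar{x}_N)$ that for $h$ large either lie in $\{x_N>\gamma_h\}$ (where $u_h=0$ on the lateral boundary, so the proposition still applies) or satisfy $\overline{B_r}\subset\{x_N<\gamma_h\}$. Since $u_h\le m$ by Lemma~2.3 of \cite{MR618549},
\[
\frac{1}{r}\fint_{\partial B_r(\bm{x}',\bar{x}_N)} u_h\,d\mathcal{H}^{N-1}\le \frac{m}{r},
\]
while the threshold $C(N,1/2)\,(h-\bar{x}_N-r/2)_+^{b}$ in \Cref{competition2} tends to $+\infty$ as $h\to\infty$. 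Hence for $h$ large the criterion is trivially met, $u_h\equiv 0$ on the entire slab $\R\times(\bar{x}_N-r/2,\bar{x}_N+r/2)$, and connectedness of $\{u_h>0\}$ forces $u_h\equiv 0$ above $\bar{x}_N$. This is exactly the paper's argument, and it requires none of the energy comparison, slicing, or monotonicity machinery you propose.
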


\begin{proof}
Let $\bar{x}_N > 0$ be given, and assume first that $\bar{x}_N > \lim_{h \to \infty} \ga_h$. Let 
\[
r \coloneqq \min\left\{\la, \frac{|\bar{x}_N - \lim_{h \to \infty} \ga_h|}{4}\right\},
\]
$h_1$ be such that $\ga_h \le \lim_h \ga_h + r$ for every $h \ge h_1$, and notice that
\[
B_r(\bm{x}',\bar{x}_N) \subset \{x_N > \ga_h\}
\]
for every $\bm{x}' \in \R$ and every $h \ge h_1$. Then, if $h > \bar{x}_N - r/2$ and $u_h \in \K_{\ga_h}$ is a global minimizer of $\J_h$, it follows from Lemma 2.3 in \cite{MR618549} that
\[
\frac{1}{r(h - \bar{x}_N - r/2)^b} \fint_{\partial B_r(\bm{x}',\bar{x}_N)}u_h\, d\mathcal{H}^{N - 1} \le \frac{m}{r(h - \bar{x}_N - r/2)^b}.
\]
Let $h_0 \ge h_1$ be such that
\[
\frac{m}{r(h - \bar{x}_N - r/2)^b} \le C(N,1/2),
\] 
where $C(N,1/2)$ is the constant in \Cref{competition2}. Then, for every $h \ge h_0$, we are in a position to apply \Cref{competition2} to conclude that $u_h$ is identically equal to zero in the set $\R \times (\bar{x}_N - r/2,\bar{x}_N + r/2)$. Since by minimality the support of $u_h$ is connected, it follows that $u_h$ must also vanish in $\R \times (\bar{x}_N,\infty)$. This concludes the proof in this case.

On the other hand, if $\lim_{h \to \infty}\ga_h > \bar{x}_N$, then it must be the case that 
\[
\overline{B_r(\bm{x}',\bar{x}_N)} \subset \{x_N < \ga_h\}
\]
for every $h > 0$, and thus we can proceed as above. 
\end{proof}


The following result is inspired by Theorem 10.1 in \cite{MR1009785} (see also Section 5 in \cite{MR647374} and Theorem 5.5 in \cite{MR2915865}).

\begin{thm}[Monotonicity]
\label{monotonicity} 
Under the assumptions of \Cref{exofhcrit}, consider $0 < d < h$ and let $u_d \in \K_{\ga_d}$ and $u_h \in \K_{\ga_h}$ be global minimizers of $\J_d$ and $\J_h$, respectively. Then
\begin{equation}
\label{mon1}
\{\bm{x} \in \Omega : u_h(\bm{x}) > 0\} \subset \{\bm{x} \in \Omega : u_d(\bm{x}) > 0\}
\end{equation}
and 
\begin{equation}
\label{mon2}
u_h \le u_d.
\end{equation}
Moreover, if there exists $\bm{x}_0 \in \partial \{u_h > 0\} \cap \Omega$ such that the free boundary is regular in a neighborhood of $\bm{x}_0$ then $u_h < u_d$ in $\{\bm{x} \in \Omega : u_d(\bm{x}) > 0\}$.
\end{thm}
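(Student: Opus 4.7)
The plan is to employ the lattice competitors $\min(u_h,u_d)$ and $\max(u_h,u_d)$. Since $\theta$ is non-increasing, $\ga_h \le \ga_d$; a short check of the boundary data on $\R \times \{0\}$ and on $\partial \R \times (\ga_h,\infty) \supset \partial \R \times (\ga_d,\infty)$ shows $\min(u_h,u_d) \in \K_{\ga_h}$ and $\max(u_h,u_d) \in \K_{\ga_d}$. Setting $A \coloneqq \{u_h > 0\} \cap \{u_d = 0\}$, the pointwise identities
\[
|\nabla \min|^2 + |\nabla \max|^2 = |\nabla u_h|^2 + |\nabla u_d|^2, \qquad \chi_{\{u_h > 0\}} - \chi_{\{\min > 0\}} = \chi_A = \chi_{\{\max > 0\}} - \chi_{\{u_d > 0\}}
\]
allow us to add $\J_h(u_h) \le \J_h(\min(u_h,u_d))$ and $\J_d(u_d) \le \J_d(\max(u_h,u_d))$ and reduce the result to
\[
\int_A \bigl[(h-x_N)_+^{2b} - (d-x_N)_+^{2b}\bigr]\,d\bm{x} \le 0.
\]
The integrand is nonnegative (as $d < h$) and strictly positive on $\{x_N < h\}$, so $|A \cap \{x_N < h\}| = 0$. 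Furthermore, this forces equality in both original comparisons, so $\min(u_h,u_d)$ and $\max(u_h,u_d)$ are themselves global minimizers of $\J_h$ in $\K_{\ga_h}$ and of $\J_d$ in $\K_{\ga_d}$, respectively.

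Next I would apply the strong maximum principle to the minimizer $\min(u_h,u_d)$ in order to extract the pointwise ordering. By \Cref{exofmin} $(iii)$, all three of $u_h,u_d,\min(u_h,u_d)$ are harmonic on $\{\min > 0\} = \{u_h > 0\} \cap \{u_d > 0\}$. On each connected component $C$ of this open set the nonnegative harmonic functions $u_h - \min$ and $u_d - \min$ are (by the strong maximum principle) each either identically zero or strictly positive on $C$; but since at every point of $C$ one of $u_h,u_d$ equals their minimum, they cannot both be strictly positive. This forces on $C$ the dichotomy $u_h \le u_d$ or $u_d \le u_h$. Using that the positivity sets of the minimizers are connected and share the common Dirichlet datum $m$ on $\R \times \{0\}$, together with the observation that in the reverse alternative $u_d \le u_h$ the identity $\max(u_h,u_d) = u_h$ would make $u_h$ a minimizer of $\J_d$ in $\K_{\ga_d}$ --- which is incompatible with the distinct free boundary conditions $\partial_{-\nu} u = (h - x_N)^b$ and $\partial_{-\nu} u = (d - x_N)_+^b$ required by \Cref{exofmin} $(v)$ at any regular point in $\R \times (0,h)$ --- one concludes $u_h \le u_d$ on all of $\Omega$. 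The inclusion $\{u_h > 0\} \subset \{u_d > 0\}$ is an immediate consequence.

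For the strict inequality, pick a regular free boundary point $\bm{x}_0 \in \partial \{u_h > 0\} \cap \Omega$; \Cref{exofmin} $(v)$ forces $x_{0,N} < h$. The nonnegative function $v \coloneqq u_d - u_h$ is superharmonic on $\{u_d > 0\}$ because $u_d$ is harmonic there while $u_h$ is subharmonic on $\Omega$ by \Cref{exofmin} $(i)$. The strong minimum principle then yields on each component of $\{u_d > 0\}$ the alternative $v \equiv 0$ or $v > 0$. Since $\bm{x}_0$ lies in the closure of some component $C$ of $\{u_d > 0\}$ (because $\{u_h > 0\} \subset \{u_d > 0\}$), the vanishing alternative on $C$ would force $u_h \equiv u_d$ on $C$ and hence $\bm{x}_0$ would also be a regular free boundary point of $u_d$, giving $(d - x_{0,N})_+^b = \partial_{-\nu} u_d(\bm{x}_0) = \partial_{-\nu} u_h(\bm{x}_0) = (h - x_{0,N})^b$; this is impossible since $d < h$ and $x_{0,N} < h$. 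Hence $v > 0$ on $\{u_d > 0\}$.

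The hardest step will be upgrading the component-wise dichotomy of the second paragraph to the global ordering $u_h \le u_d$: the strong maximum principle only yields information on each connected component of $\{u_h > 0\} \cap \{u_d > 0\}$, and globalizing requires a careful use of the connectedness of positivity sets of minimizers together with the incompatibility of the free boundary conditions at heights $h$ and $d$, in the spirit of Theorem 10.1 of \cite{MR1009785}.
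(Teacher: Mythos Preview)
Your approach via the lattice competitors $\min(u_h,u_d)$ and $\max(u_h,u_d)$ is exactly the paper's, and the energy computation leading to $|A \cap \{x_N < h\}| = 0$ and to the minimality of $\min$ and $\max$ is correct. The paper, however, organizes the rest differently: it first proves the set inclusion \eqref{mon1} (treating the region $\{x_N \ge h\}$ by a short harmonicity argument), and only then attacks the pointwise ordering. The key divergence is how the ``wrong'' alternative $u_d \le u_h$ is excluded. You propose to use the incompatibility of the two free boundary conditions $(h-x_N)^b$ and $(d-x_N)_+^b$ at a regular point of $\partial\{u_h>0\}$ in $\R\times(0,h)$. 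This has a genuine gap: nothing so far rules out the case $\{u_h>0\}\supset \R\times(0,h)$, in which $u_h$ has no free boundary points below height $h$ and your comparison is vacuous. (This configuration is exactly what occurs for $h$ below the critical height, cf.\ \Cref{exofhcrit}\,$(iii)$.)

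The paper sidesteps this by a strict convexity argument that needs no free boundary information: assuming $u_d \le u_h$ with $u_d \not\equiv u_h$, one combines this with the already-established inclusion \eqref{mon1} to get $\{u_h>0\}=\{u_d>0\}$; then the area terms in $\J_h$ and $\J_d$ agree for both functions, the cross-comparisons $\J_h(u_h)\le\J_h(u_d)$ and $\J_d(u_d)\le\J_d(u_h)$ force equality of the Dirichlet integrals, and finally $v=\tfrac12(u_h+u_d)\in\K_{\ga_h}$ has $\J_h(v)<\J_h(u_h)$ by strict convexity of the Dirichlet energy, contradicting minimality. Your superharmonicity argument for the strict inequality in the last part is a clean alternative to the paper's version (which reuses the ``equality at one positive point forces global equality'' claim), and it works as written since by that stage the inclusion $\{u_h>0\}\subset\{u_d>0\}$ is available.
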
 

\begin{proof} 
We divide the proof into several steps.
\newline
\textbf{Step 1:} Define $v_1 \coloneqq \min\{u_d, u_h\}$ and $v_2 \coloneqq \max\{u_d, u_h\}$. Since by assumption $\ga_h$ is non-increasing as a function of $h$, we have that $v_1 \in \K_{\ga_h}$ and $v_2 \in \K_{\ga_d}$, and so
\begin{equation}
\label{v1v2min}
\J_d(u_d) + \J_h(u_h) \le \J_d(v_2) + \J_h(v_1).
\end{equation}
Notice that 
\begin{align*}
\int_{\Omega}\left(|\nabla v_1|^2 + |\nabla v_2|^2\right)\,d\bm{x} & = \int_{\{u_h > u_d\}}\left(|\nabla v_1|^2 + |\nabla v_2|^2\right)\,d\bm{x} + \int_{\{u_h \le u_d\}}\left(|\nabla v_1|^2 + |\nabla v_2|^2\right)\,d\bm{x} \\
& = \int_{\{u_h > u_d\}}\left(|\nabla u_d|^2 + |\nabla u_h|^2\right)\,d\bm{x} + \int_{\{u_h \le u_d\}}\left(|\nabla u_h|^2 + |\nabla u_d|^2\right)\,d\bm{x} \\
& = \int_{\Omega}\left(|\nabla u_d|^2 + |\nabla u_h|^2\right)\,d\bm{x}.
\end{align*}
Therefore we can rewrite (\ref{v1v2min}) canceling out the gradient terms, and by rearranging the remaining terms we obtain
\begin{equation}
\label{v1v2min2}
\int_{\{u_h > u_d\}}\left(\chi_{\{u_h > 0\}} - \chi_{\{u_d > 0\}}\right)\left((h - x_N)_+^{2b} - (d - x_N)_+^{2b}\right)\,d\bm{x} \le 0.
\end{equation}
Since the integrand in (\ref{v1v2min2}) is nonnegative, and also recalling that $u_d$ and $u_h$ are continuous in $\Omega$, we see that
\[
\left(\{u_h > 0\} \cap \{x_N < h\}\right) \cap \{u_h > u_d\} \subset \left(\{u_d > 0\} \cap \{x_N < h\}\right) \cap \{u_h > u_d\},
\]
which together with the fact that
\[
\{u_h > 0\} \cap \{u_h \le u_d\} \subset \{u_d > 0\} \cap \{u_h \le u_d\}
\]
yields
\begin{equation}
\label{mon1<h}
\{u_h > 0\} \cap \{x_N < h\} \subset \{u_d > 0\} \cap \{x_N < h\}.
\end{equation}
Notice that if the set $\{\bm{x} \in \Omega : u_h(\bm{x}) > 0\}$ is contained in $\R \times (0,d)$ then (\ref{mon1}) follows from (\ref{mon1<h}). On the other hand, if this is not the case, again by (\ref{mon1<h}) we deduce the existence of a point $\bm{x} \in \R \times [d,\infty)$ with the property that $u_d(\bm{x}) > 0$. Since $u_d$ is harmonic in $\R \times \{x_N > d\}$, it must be the case that $u_d > 0$ in $\R \times (d,\infty)$. This conclude the proof of (\ref{mon1}).
\newline
\textbf{Step 2:} We observe that since (\ref{v1v2min2}) is actually an equality, then (\ref{v1v2min}) must an equality as well, and so $v_1$ and $v_2$ are global minimizers of $\J_h$ and $\J_d$ in $\K_{\ga_h}$ and $\K_{\ga_d}$, respectively. We now claim that if there is $\bm{x}_0 \in \Omega$ such that $u_d(\bm{x}_0) = u_h(\bm{x}_0) > 0$, then $u_d = u_h$ everywhere in $\Omega$. To see this, we notice that in a neighborhood of $\bm{x}_0$ the functions $u_d - v_2$ and $u_h - v_2$ are harmonic, nonpositive and attain a maximum at an interior point. Then, by the maximum principle, both $u_d - v_2$ and $u_h - v_2$ must vanish in the connected component of $\{u_h > 0\}$ which contains $\bm{x}_0$; the claim follows upon recalling that the set $\{u_h > 0\}$ is connected as a consequence of the minimality of $u_h$.

To prove (\ref{mon2}), assume by contradiction that there is $\bm{x} \in \Omega$ such that $u_h(\bm{x}) > u_d(\bm{x})$. If there is $\bm{y} \in \{u_h > 0\}$ such that $u_d(\bm{y}) > u_h(\bm{y})$, then by the connectedness of $\{u_h > 0\}$, together with the fact that $u_h$ and $u_d$ are continuous, we have that there is $\bm{z} \in \Omega$ such that $u_h(\bm{z}) = u_d(\bm{z}) > 0$. By the claim we just proved, this would imply that $u_h = u_d$, a contradiction. Hence $u_d \le u_h$ in $\{u_h > 0\}$, which together with (\ref{mon1}) implies that
\begin{equation}
\label{mon2pf}
 \{u_h > 0\} = \{u_d > 0\}.
\end{equation}
In turn, 
\begin{align}
\begin{aligned}
\label{areauhud}
\int_{\Omega}\chi_{\{u_h > 0\}}(h - x_N)_+^{2b}\,d\bm{x} & = \int_{\Omega}\chi_{\{u_d > 0\}}(h - x_N)_+^{2b}\,d\bm{x}, \\
\int_{\Omega}\chi_{\{u_d > 0\}}(d - x_N)_+^{2b}\,d\bm{x} & = \int_{\Omega}\chi_{\{u_h > 0\}}(d - x_N)_+^{2b}\,d\bm{x}.
\end{aligned}
\end{align}
From (\ref{mon2pf}) we also see that $u_d \in \K_{\ga_h}$. Consequently, we have that $\J_h(u_h) \le \J_h(u_d)$ and $\J_d(u_d) \le \J_d(u_h)$, which together with (\ref{areauhud}) imply that
\[
\int_{\Omega}|\nabla u_h|^2\,d\bm{x} = \int_{\Omega}|\nabla u_d|^2\,d\bm{x}.
\]
Consider $v \coloneqq \frac{1}{2} u_h + \frac 12 u_d \in \K_{\ga_h}$. By the strict convexity of the Dirichlet energy, we have
\[
\J_h(v) < \int_{\Omega}\left(\frac{1}{2}|\nabla u_h|^2 + \frac{1}{2}|\nabla u_d|^2 + \chi_{\{v > 0\}}(h - x_N)_+^{2b}\right)\,d\bm{x} = \J_h(u_h),
\]
a contradiction to the minimality of $u_h$, and (\ref{mon2}) is hence proved.
\newline
\textbf{Step 3:} Finally, assume by contradiction that there is $\bm{x}_1 \in \{u_d > 0\}$ such that $u_h(\bm{x}_1) = u_d(\bm{x}_1)$, so that $u_h = u_d$ in $\Omega$. Then, by \Cref{exofmin}, for $\bm{x}_0 = (\bm{x}_0',x_N)$ as in the statement we have that
\[
(h - x_N)^b = \partial_{-\nu} u_h(\bm{x}_0) = \partial_{-\nu} u_d(\bm{x}_0) = (d - x_N)^b. 
\]
This is in contradiction with the assumption $d \neq h$. Hence $u_h < u_d$ in $\{u_d > 0\}$, as claimed.
\end{proof}

\begin{proof}[Proof of \Cref{exofhcrit}]
Let 
\begin{multline}
\label{hcritdef}
h_{\cri} \coloneqq \inf \{h > 0 : \exists \text{ a global minimizer } u_h \in \K_{\ga_h} \text{ of } \J_h \text{ such that } \supp u_h \subset \{x_N \le h\}\}.
\end{multline}
By \Cref{regwaves} we have that $h_{\cri} < \infty$. Assume for the sake of contradiction that $h_{\cri} = 0$. Then for every $h > 0$ there exists a global minimizer $u_h \in \K_{\ga_h}$ with the property that the support of $u_h$ is contained in the set $\{x_N \le h \}$. Reasoning as in the proof of \Cref{nolines}, we see that 
\begin{equation}
\label{trivialbound}
\J_h(u_h) > \int_{\Omega}|\nabla u_h|^2\,d\bm{x} \ge \frac{\la^{N - 1} m^2}{h}.
\end{equation}
Since by assumption the function $\theta$ is non-increasing, there exists $\bar{h}$ such that if $h \le \bar{h}$ then
\[
h \le \theta(h) = \ga_h.
\]
For every such $h$ we let $w$ be the function defined in the proof of \Cref{exofgamma}. Then, it follows from (\ref{graddelta}) and (\ref{areah<g}) that 
\[
\J_h(w) < \frac{\la^{N - 1}m^2}{\ga_h} + \la^{N - 1}\frac{h^{2b + 1}}{2b + 1} + \mathcal{O}(\delta).
\]
Notice that if $h$ is chosen sufficiently small 
\[
\frac{\la^{N - 1}m^2}{\ga_h} + \la^{N - 1}\frac{h^{2b + 1}}{2b + 1} < \frac{\la^{N - 1}m^2}{h}.
\]
In turn, by (\ref{trivialbound}) for every $\delta$ small enough we see that
\[
\J_h(w) < \J_h(u_h).
\] 
Since by definition $w \in \K_{\ga}$, this contradicts the minimality of $u_h$. Consequently, we have shown that $h_{\cri} > 0$. Properties $(ii)$ and $(iii)$ follow from \Cref{monotonicity}; we omit the details.
\end{proof}

\begin{rmk}
\label{intfbreg}
By \Cref{exofmin}, it follows that if $u \in \K_{\ga_h}$ is a global minimizer of $\J_h$ for $h > h_{\cri}$, then for every $K$ compactly contained in $\Omega$, the free boundary $\partial\{u > 0\} \cap K$ is a smooth hypersurface except possibly on a closed singular of Hausdorff dimension $N - 5$.
\end{rmk}

\subsection{Scaling of the critical height}
\begin{thm}[Comparison principle]
\label{cp} 
Given $b, m, h, \delta, \ga, \la > 0$, let $u$ be a global minimizer of $\J_h$ in $\K_{\delta}$ and let $w$ be a global minimizer of $\J_h$ in $\K_{\ga}$, where $\J_h$ is the functional in $(\ref{Jper})$ and $\K_{\delta}, \K_{\ga}$ are defined as in $(\ref{Kg})$. Then either 
\[
\{u > 0\} \subset \{w > 0\}\ \text{and}\ u \le w\
\]
or
\[
\{w > 0\} \subset \{u > 0\}\ \text{and}\ w \le u.
\]
\end{thm}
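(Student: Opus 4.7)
The plan is to assume without loss of generality that $\delta \le \gamma$, so that $\K_\delta \subset \K_\gamma$, and to show that the first alternative $\{u > 0\} \subset \{w > 0\}$, $u \le w$ must hold. Replacing minimizers by their positive parts (which remain admissible and do not increase the energy), I may assume $u, w \ge 0$. Define
\[
v_1 \coloneqq \min\{u, w\}, \qquad v_2 \coloneqq \max\{u, w\}.
\]
A direct inspection of the traces shows $v_1 \in \K_\delta$ and $v_2 \in \K_\gamma$: both equal $m$ on $\R \times \{0\}$; on $\partial \R \times (\delta, \infty)$ one has $u = 0$ and $w \ge 0$, so $v_1 = 0$; and on $\partial \R \times (\gamma, \infty)$ both $u$ and $w$ vanish, so $v_2 = 0$.

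Next, I would exploit the standard pointwise identities
\[
|\nabla v_1|^2 + |\nabla v_2|^2 = |\nabla u|^2 + |\nabla w|^2, \qquad \chi_{\{v_1 > 0\}} + \chi_{\{v_2 > 0\}} = \chi_{\{u > 0\}} + \chi_{\{w > 0\}},
\]
which yield $\J_h(v_1) + \J_h(v_2) = \J_h(u) + \J_h(w)$. Combining this with the minimality inequalities $\J_h(u) \le \J_h(v_1)$ and $\J_h(w) \le \J_h(v_2)$ forces both to be equalities, so $v_1$ and $v_2$ are themselves global minimizers of $\J_h$ in $\K_\delta$ and $\K_\gamma$ respectively. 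By \Cref{exofmin}, both are harmonic on their positivity sets, and, as in Step~1 of the proof of \Cref{monotonicity}, each of the open sets $\{u > 0\}$, $\{w > 0\}$, $\{v_1 > 0\}$, $\{v_2 > 0\}$ is connected as a consequence of minimality.

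The crux is the following touching lemma: if there exists $\bm{z} \in \Omega$ with $u(\bm{z}) = w(\bm{z}) > 0$, then $u \equiv w$ on $\Omega$. To prove it, I would first observe that $v_2 - u \ge 0$ is harmonic on the connected open set $\{u > 0\}$ (on which $v_2 > 0$ is also harmonic) and vanishes at $\bm{z}$; the strong maximum principle then forces $v_2 \equiv u$ on $\{u > 0\}$, i.e., $w \le u$ there. Symmetrically $v_2 \equiv w$ on $\{w > 0\}$, and so $u = w$ on the overlap $\{u > 0\} \cap \{w > 0\}$. To upgrade this to $\{u > 0\} = \{w > 0\}$, suppose for contradiction that some $\bm{p} \in \{u > 0\} \setminus \{w > 0\}$ exists. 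By connectedness of $\{u > 0\}$ there is a continuous path from $\bm{z}$ to $\bm{p}$ within $\{u > 0\}$; along this path $w$ is positive at $\bm{z}$ and zero at $\bm{p}$, hence first vanishes at some $\bm{q} \in \{u > 0\} \cap \partial \{w > 0\}$. Approaching $\bm{q}$ along the path from the $\{w > 0\}$ side and using $v_2 = w$ there gives $v_2(\bm{q}) = 0$ by continuity, whereas $v_2 = u$ on $\{u > 0\} \ni \bm{q}$ gives $v_2(\bm{q}) = u(\bm{q}) > 0$, a contradiction. Hence $\{u > 0\} = \{w > 0\}$, and together with the overlap identity this forces $u \equiv w$ on $\Omega$.

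The dichotomy now follows easily: if neither $u \le w$ nor $w \le u$ holds globally, pick $\bm{x}_1$ with $u(\bm{x}_1) > w(\bm{x}_1)$ and $\bm{x}_2$ with $w(\bm{x}_2) > u(\bm{x}_2)$; both lie in the connected set $\{v_2 > 0\}$. Joining them by a continuous path in $\{v_2 > 0\}$ and applying the intermediate value theorem to $u - w$ produces a point $\bm{z}$ with $u(\bm{z}) = w(\bm{z})$, and since $v_2(\bm{z}) > 0$ both values must be strictly positive. The touching lemma then forces $u \equiv w$, contradicting $u(\bm{x}_1) > w(\bm{x}_1)$. Thus one of the two alternatives holds, and the corresponding support inclusion is immediate from nonnegativity. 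The hard part, as the outline makes clear, is the touching lemma, and in particular the step upgrading $u = w$ on $\{u > 0\} \cap \{w > 0\}$ to equality of supports, where the harmonicity of $u$ and $w$ alone does not suffice and one truly needs the auxiliary minimizer $v_2$ together with its continuity across the free boundaries.
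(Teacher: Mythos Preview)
Your proof is correct and follows essentially the same route as the paper's: reduce to $\delta \le \gamma$, set $v_1 = \min\{u,w\}$, $v_2 = \max\{u,w\}$, use the energy identity $\J_h(v_1) + \J_h(v_2) = \J_h(u) + \J_h(w)$ to conclude both are minimizers, establish the touching lemma via the strong maximum principle applied to $v_2 - u$ and $v_2 - w$, and finish with a path/IVT argument. Your write-up of the touching lemma is in fact more detailed than the paper's (which simply refers back to the proof of \Cref{monotonicity}), and your use of the connected set $\{v_2 > 0\}$ for the final path argument is a clean variant of the paper's use of $\{u > 0\}$.

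One small point of presentation: your opening sentence announces that under $\delta \le \gamma$ you will show specifically the first alternative $u \le w$ holds, but neither your argument nor the paper's establishes this---both only prove the dichotomy, which is exactly what the theorem asserts. The assumption $\delta \le \gamma$ is used only to place $v_1 \in \K_\delta$ and $v_2 \in \K_\gamma$, not to single out one of the two alternatives.
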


\begin{proof} Assume without loss of generality that $\delta \le \ga$. As in the proof of \Cref{monotonicity}, we consider $v_1 \coloneqq \min\{u,w\}$ and $v_2 \coloneqq \max\{u,w\}$. Then $v_1 \in \K_{\delta}$, $v_2 \in \K_{\ga}$, and in particular we have   
\[
\J_h(u) + \J_h(w) = \J_h(v_1) + \J_h(v_2).
\]
Therefore $v_1$ and $v_2$ are global minimizers of $\J_h$ in $\K_{\delta}$ and in $\K_{\ga}$, respectively. Reasoning as in the proof of \Cref{monotonicity}, we recall that if there exists a point $\bm{x}_0$ such that $u(\bm{x}_0) = w(\bm{x}_0) > 0$ then $u = w$ everywhere in $\Omega$. Next, we assume by contradiction that the supports of $u$ and $w$ do not satisfy the inclusions as in the statement, i.e.\@, there exist $\bm{x}, \bm{y} \in \Omega$ such that $u(\bm{x}) > 0$, $w(\bm{y}) > 0$ and $u(\bm{y}) = w(\bm{x}) = 0$. Let $\bm{z} \in \Omega$ be such that $u(\bm{z}) > 0$ and $w(\bm{z}) > 0$ (such a point $\bm{z}$ exists since by minimality we have that $\J_h(u)$ and $\J_h(w)$ are both finite). We assume first that $w(\bm{z}) > u(\bm{z})$. Then, since by minimality $\{u > 0\}$ is open and connected and thus path-wise connected, we can find a continuous curve $\bm{\varphi} \colon [0,1] \to \Omega$ joining $\bm{z}$ to $\bm{x}$, with support contained in $\{u > 0\}$. Define 
\[
v(t) \coloneqq w(\bm{\varphi}(t)) - u(\bm{\varphi}(t)).
\]
Notice that by construction $v(0) = w(\bm{z}) - u(\bm{z}) > 0$ and $v(1) = w(\bm{x}) - u(\bm{x}) < 0$, and so there exists $t_0 \in (0,1)$ such that $v(t_0) = 0$. Thus $0 < u(\bm{\varphi}(t)) = w(\bm{\varphi}(t))$, which in turn implies that $u = w$, a contradiction. Similarly, if $u(\bm{z}) > w(\bm{z})$, we arrive to a contradiction by considering a continuous curve $\bm{\psi} \colon [0,1] \to \Omega$ that joins $\bm{z}$ with $\bm{y}$ and with support contained in $\{w > 0\}$. The rest of the proof is analogous to the proof of (\ref{mon2}).
\end{proof}

\begin{lem}
\label{hc<}
Under the assumptions of \Cref{exofhcrit}, we have that 
\[
h_{\cri} \le h^* = \frac{2b + 2}{(2b + 1)^{b/(b + 1)}}m^{1/(b + 1)}.
\]
\end{lem}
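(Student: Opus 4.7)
The plan is to exhibit, at $h = h^*$, a global minimizer of $\J_{h^*}$ in $\K_{\ga_{h^*}}$ whose support is contained in $\{x_N \le h^*\}$; by the definition of $h_{\cri}$ in $(\ref{hcritdef})$, this will immediately yield $h_{\cri} \le h^*$. The natural candidate is the one-dimensional linear profile $v_{t_{h^*}}$ from $(\ref{1dsol})$, and the argument combines a slicing lower bound with the comparison principle \Cref{cp}.

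First, by Tonelli's theorem together with an extension of the one-dimensional analysis of \Cref{1dpb}(iii) to the absence of the constraint $v(\ga)=0$, I would show that for every $u \in H^1_{\per}(\Omega)$ with $u(\cdot,0)=m$,
\[
\J_{h^*}(u) \ge \int_\R \mathcal{I}_{h^*}(u(\bm{x}',\cdot))\,d\bm{x}' \ge \la^{N-1} g_{h^*}(t_{h^*}).
\]
The non-trivial step is the one-dimensional bound $\mathcal{I}_{h^*}(v) \ge g_{h^*}(t_{h^*})$ for every $v \in H^1_{\loc}((0,\infty))$ with $v(0)=m$: if $v$ has a first zero $s\in(0,\infty)$, one restricts to $[0,s]$ and applies \Cref{1dpb}(iii) together with the fact that $t_{h^*}$ is the absolute minimum of $g_{h^*}$; if instead $v>0$ throughout $(0,\infty)$, the area term alone gives $\mathcal{I}_{h^*}(v) \ge (h^*)^{2b+1}/(2b+1)$, which equals $g_{h^*}(t_{h^*})$ at $h=h^*$ by the characterization of the critical height in Step 3 of the proof of \Cref{1dpb}. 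Setting $\tilde\ga := \max(\ga_{h^*}, t_{h^*})$, the profile $v_{t_{h^*}}$ lies in $\K_{\tilde\ga}$ (its support is $\R \times [0,t_{h^*}]$), and $(\ref{tonelliJ})$ together with $(\ref{I=gpf})$ gives $\J_{h^*}(v_{t_{h^*}}) = \la^{N-1} g_{h^*}(t_{h^*})$; hence $v_{t_{h^*}}$ is a global minimizer of $\J_{h^*}$ in $\K_{\tilde\ga}$.

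Finally, I would let $u^* \in \K_{\ga_{h^*}}$ be any global minimizer of $\J_{h^*}$ in $\K_{\ga_{h^*}}$ (whose existence is guaranteed by \Cref{exofmin}) and apply \Cref{cp} with $\delta=\ga_{h^*} \le \tilde\ga$. Either (A) $\{u^*>0\} \subset \{v_{t_{h^*}}>0\}$ and $u^* \le v_{t_{h^*}}$, or (B) $\{v_{t_{h^*}}>0\} \subset \{u^*>0\}$ and $v_{t_{h^*}} \le u^*$. In case (A), since $t_{h^*} < h^*/(b+1) < h^*$ by \Cref{1dpb}(ii)--(iii), one immediately obtains $\supp u^* \subset \R \times [0,t_{h^*}] \subset \{x_N \le h^*\}$. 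In case (B), if $\ga_{h^*}<t_{h^*}$, the trace of $u^*$ vanishes on $\partial \R \times (\ga_{h^*}, t_{h^*})$ whereas $v_{t_{h^*}}>0$ there, contradicting $v_{t_{h^*}} \le u^*$ by the trace theorem; if instead $\ga_{h^*} \ge t_{h^*}$, then $v_{t_{h^*}} \in \K_{\ga_{h^*}}$ attains the lower bound, so $\J_{h^*}(u^*) = \la^{N-1} g_{h^*}(t_{h^*})$, equality in Tonelli forces $u^*$ to be independent of $\bm{x}'$, and the uniqueness in \Cref{1dpb}(iii) yields $u^*=v_{t_{h^*}}$, with the desired support bound again valid. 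The main obstacle is the unrestricted one-dimensional lower bound, particularly the case where $v$ never reaches zero, which hinges on the identity $g_{h^*}(\infty)=g_{h^*}(t_{h^*})$ that defines the critical height $h^*$; once this is in place, \Cref{cp} converts the $1$-d information into the $N$-dimensional support bound in a few lines and handles both the easy case $\ga_{h^*} \ge t_{h^*}$ (where $v_{t_{h^*}}$ is directly admissible) and the subtler case $\ga_{h^*} < t_{h^*}$ (where admissibility fails) uniformly.
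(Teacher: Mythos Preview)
Your argument is correct and rests on the same two pillars as the paper's proof: the one-dimensional optimal profile $v_{t_h}$ from \Cref{1dpb}\,(iii) and the comparison principle \Cref{cp}. The difference is purely tactical. The paper argues by contradiction: it picks $h\in(h^*,h_{\cri})$, where \Cref{1dpb}\,(iii) plus Tonelli give that $v_{t_h}$ is the \emph{unique} global minimizer of $\J_h$ in $\K_\ga$ for every $\ga\ge t_h$ (the strict inequality $g_h(t_h)<h^{2b+1}/(2b+1)$ for $h>h^*$ handles slices that never vanish), and then a single application of \Cref{cp} together with a continuity argument near the lateral boundary forces any minimizer $u_h\in\K_{\ga_h}$ under $v_{t_h}$. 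You instead work directly at $h=h^*$; this obliges you to check the borderline identity $g_{h^*}(t_{h^*})=(h^*)^{2b+1}/(2b+1)$ in order to bound the strictly positive slices, which you do correctly. Your explicit slicing lower bound is exactly what the paper's terse appeal to ``Tonelli and \Cref{1dpb}\,(iii)'' hides, so in this respect your write-up is more transparent.

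One simplification: in case~(B) with $\ga_{h^*}\ge t_{h^*}$ you do not need the equality-in-Tonelli argument. Once you have established that $v_{t_{h^*}}\in\K_{\ga_{h^*}}$ attains the lower bound $\la^{N-1}g_{h^*}(t_{h^*})$, the function $v_{t_{h^*}}$ is \emph{itself} a global minimizer of $\J_{h^*}$ in $\K_{\ga_{h^*}}$ with support in $\{x_N\le t_{h^*}\}\subset\{x_N\le h^*\}$; by $(\ref{hcritdef})$ this alone gives $h_{\cri}\le h^*$, with no need to identify $u^*$.
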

\begin{proof}
Assume by contradiction that $h_{\cri} > h^*$, and let $h^* < h < h_{\cri}$. By Tonelli's theorem and \Cref{1dpb} $(iii)$ we have that the function $w \colon \RR^N_+ \to \RR$ defined by 
\[
w(\bm{x}) \coloneqq v_{t_h}(x_N),
\]
is the unique global minimizer of $\J_h$ in $\K_{\ga}$ for every $\ga \ge t_h$. Notice that by (\ref{gammarange}) it must be the case that $\ga_h < t_h$. Let $u_h \in \K_{\ga_h}$ be a global minimizer of $\J_h$. Since by assumption $u(\bm{x}) = 0$ for $\bm{x} = (\bm{x}',x_N) \in \partial \R \times (\ga_h, \infty)$, by continuity we can find $\bm{x}'_0 \in \R$ close to $\partial \R$ such that 
\[
u(\bm{x}'_0, \ga_h) < m\left(1 - \frac{\ga_h}{t_h}\right) = w(\bm{x}'_0, \ga_h).
\]
Then it follows from \Cref{cp} that $u_h \le w$, and in particular 
\[
\{u_h > 0\} \subset \{w > 0\} = \{x_N < t_h\}.
\]
Hence $u_h$ has bounded support in $\Omega$, we have reached a contradiction to the definition of $h_{\cri}$ (see \Cref{exofhcrit}).
\end{proof}

\begin{lem}
\label{scaling2}
Under the assumptions of \Cref{exofhcrit}, the following hold:
\begin{itemize}
\item[$(i)$] let $a,c \in \RR_+$ be such that 
\begin{equation}
\label{ac}
0 < \frac{a}{1 - B(b)a^{2b + 2}} \le c, \quad \text{ where } B(b) \coloneqq \frac{(2b + 2)^{2b + 2}}{(2b + 1)^{2b + 1}},
\end{equation}
and define $h \coloneqq ah^*$. Then $h_{\cri} \ge h$ provided that $\ga_h \ge ch^*$;
\item[$(ii)$] if $\ga_{t_{h^*}} \ge t_{h^*}$ then
\[
h_{\cri} \ge \frac{m^{1/(b + 1)}}{(2b + 1)^{b/(b + 1)}}.
\]
\end{itemize}
\end{lem}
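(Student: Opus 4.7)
My plan is to exhibit a one-dimensional flat competitor $w(\bm x) := v_{ch^*}(x_N) = m(1 - x_N/(ch^*))_+$. The hypothesis (\ref{ac}) forces $B(b)a^{2b+2} < 1$, hence $c > a$ and $ch^* > h := ah^*$, while $\ga_h \ge ch^*$ ensures $w \in \K_{\ga_h}$. By Tonelli combined with \eqref{glam},
\[
\J_h(w) = \la^{N-1} g_h(ch^*) = \la^{N-1}\left(\frac{m^2}{ch^*} + \frac{h^{2b+1}}{2b+1}\right),
\]
and substituting $h = ah^*$ together with the identity $m^2 = (h^*)^{2b+2}(2b+1)^{2b}/(2b+2)^{2b+2}$, which one reads off from \eqref{h*}, shows that condition (\ref{ac}) is equivalent to $\J_h(w) \le \la^{N-1}m^2/h$. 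Now suppose for contradiction that some global minimizer $u_h \in \K_{\ga_h}$ satisfies $\supp u_h \subset \{x_N \le h\}$. Jensen's inequality applied to the vertical slices of $u_h$ (whose boundary values are $u_h(\bm x',0) = m$ and $u_h(\bm x', h) = 0$, the latter by local Lipschitz continuity together with the support assumption) yields $\int_\Omega |\nabla u_h|^2\,d\bm x \ge \la^{N-1} m^2/h$; since the area term is strictly positive (as $u_h > 0$ near $\R \times \{0\}$), $\J_h(u_h) > \la^{N-1}m^2/h \ge \J_h(w)$, contradicting minimality. By the definition of $h_{\cri}$ together with the monotonicity of supports in \Cref{monotonicity}, this gives $h_{\cri} \ge h$.

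\textbf{Part (ii).} Set $t^* := t_{h^*} = m^{1/(b+1)}/(2b+1)^{b/(b+1)}$; a direct calculation from the formulas \eqref{h*} (using $(b+1)((2b+1)/b)^{b/(b+1)} > 1$) gives $t^* < h^\#$. Fix any $h < t^*$; by monotonicity of $\theta$ and the standing hypothesis one has $\ga_h \ge \ga_{t^*} \ge t^* > h$, so both $\ga_h > h$ strictly and $h < h^\#$. The key new ingredient is to replace the crude Jensen bound of part (i) by a sharper \emph{slicewise} one-dimensional bound. Assume for contradiction that some minimizer $u_h$ of $\J_h$ in $\K_{\ga_h}$ has $\supp u_h \subset \{x_N \le h\}$. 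For a.e.\@ $\bm x' \in \R$ the slice $u_h(\bm x',\cdot)$ belongs to $\K_{h, 1\text{-d}}$, and since $h < h^\#$, \Cref{1dpb}(i) gives
\[
\int_0^\infty\big(|\partial_{x_N} u_h(\bm x', x_N)|^2 + \chi_{\{u_h > 0\}}(\bm x', x_N)(h - x_N)_+^{2b}\big)\,dx_N \ge g_h(h).
\]
Integrating in $\bm x'$ and discarding the tangential gradient contributions yields $\J_h(u_h) \ge \la^{N-1} g_h(h)$. On the other hand, the flat profile $\tilde w(\bm x) := v_{\ga_h}(x_N) = m(1 - x_N/\ga_h)_+$ lies in $\K_{\ga_h}$ and, by Tonelli, satisfies $\J_h(\tilde w) = \la^{N-1} g_h(\ga_h)$; since $g_h$ is strictly decreasing on $(0, \infty)$ whenever $h < h^\#$ (again \Cref{1dpb}(i)) and $\ga_h > h$ strictly, one has $g_h(\ga_h) < g_h(h)$, so $\J_h(\tilde w) < \J_h(u_h)$, contradicting minimality. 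Hence $h \le h_{\cri}$ for every $h < t^*$, and letting $h \nearrow t^*$ proves $h_{\cri} \ge t^*$.

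The principal technical point is the slicewise upgrade used in (ii): moving from the crude lower bound $\la^{N-1}m^2/h$ (which reflects only Jensen on the Dirichlet datum) to the sharper $\la^{N-1}g_h(h)$ (which incorporates also the full one-dimensional area contribution) is precisely what allows the borderline condition $\ga_{t^*} \ge t^*$—rather than a strictly larger bound as in (i)—to be sufficient to conclude $h_{\cri} \ge t^*$.
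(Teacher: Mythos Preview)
Your proof is correct. Part (i) follows essentially the paper's approach: both construct a one-dimensional competitor and compare against the Jensen-type lower bound $\J_h(u_h) > \la^{N-1}m^2/h$ for a minimizer with bounded support. You take $v_{ch^*}$ as the competitor while the paper takes $v_{\ga_h}$, but since $\ga_h \ge ch^*$ these yield equivalent estimates, and your algebraic reduction of condition (\ref{ac}) is exactly the computation the paper leaves implicit.

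Part (ii), however, takes a genuinely different route. The paper argues indirectly: it first modifies $\theta$ on $(t_{h^*},\infty)$ so that $\ga_{h^*} \ge t_{h^*}$, then asserts (appealing to \Cref{1dpb}, though a slicing argument using that $t_{h^*}$ is the \emph{global} minimum of $g_{h^*}$ is what is really needed) that the one-dimensional profile $v_{t_{h^*}}$ is the unique minimizer of $\J_{h^*}$ in $\K_{\ga_{h^*}}$. Monotonicity (\Cref{monotonicity}) then forces every minimizer $u_h$ with $h \le t_{h^*}$ to be positive on $\{x_N < t_{h^*}\}$, from which the conclusion follows. Your argument sidesteps both the modification of $\theta$ and the appeal to monotonicity: because the contradiction hypothesis $\supp u_h \subset \{x_N \le h\}$ forces each vertical slice into $\K_{h,1\text{-d}}$, and because $h < t^* < h^\#$ ensures $g_h$ is strictly decreasing, the slicewise bound $\J_h(u_h) \ge \la^{N-1}g_h(h)$ follows directly from \Cref{1dpb}(i) and beats the flat competitor $v_{\ga_h}$. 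This is more elementary and self-contained. The paper's route, by contrast, has the advantage of yielding the stronger structural fact (\ref{finallyabound}) that every minimizer at height $h \le t_{h^*}$ is positive throughout $\{x_N < t_{h^*}\}$, which your direct argument does not recover.
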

\begin{proof}
Let $a,c,h,\ga_h$ be as in statement $(i)$ and assume for the sake of contradiction that $h_{\cri} < h$. Then it follows from the definition of $h_{\cri}$ that there is a global minimizer $u_h \in \K_{\ga_h}$ of $\J_h$ with $\supp u_h \subset \{x_N  \le h\}$. As in the proof of \Cref{nolines},
\begin{equation}
\label{enlb}
\J_h(u_h) > \frac{\la^{N - 1}m^2}{h}.
\end{equation}
Let $w(\bm{x}) \coloneqq v_{\ga_h}(x_N)$, where $v_{\ga_h}$ is defined in (\ref{1dsol}). We claim that $\J_h(w) < \J_h(u_h)$. Since this would clearly be a contradiction to the minimality of $u_h$, the claim implies the desired result, i.e., $h_{\cri} \ge h$.
In view of the fact that by (\ref{tonelliJ}) and (\ref{I=g})
\[
\J_h(w) \le \frac{\la^{N - 1}m^2}{\ga_h} + \la^{N - 1}\frac{h^{2b + 1}}{2b + 1},
\]
and recalling (\ref{enlb}), to prove the claim it is enough to show that 
\[
\frac{m^2}{\ga_h} + \frac{h^{2b + 1}}{2b + 1} \le \frac{m^2}{h},
\]
which in turn is implied by (\ref{ac}).

To prove the second statement, we begin by showing that for every $h \le t_{h^*}$ 
\begin{equation}
\label{finallyabound}
\{y < t_{h^*}\} \subset \{u_h > 0\},
\end{equation}
where, as usual, $u_h$ refers to a global minimizer of $\J_h$ in $\K_{\ga_h}$. This fact follows from the simple observation that for $h \le t_{h^*}$, minimizers of $\J_h$ in $\K_{\ga_h}$ are independent of the values of $\ga_h$ for $h > t_{h^*}$. Consequently, we can assume without loss of generality that $\ga_h \ge t_{h^*}$ for every $h > t_{h^*}$. In particular, this implies that $\ga_{h^*} \ge t_{h^*}$ and therefore $u_{h^*}(\cdot,x_N) \coloneqq v_{t_{h^*}}(x_N)$ is the unique global minimizer of $\J_{h^*}$ (see \Cref{1dpb}). The rest follows from \Cref{monotonicity}. Assume for the sake of contradiction that 
\[
h_{\cri} < \frac{m^{1/(b + 1)}}{(2b + 1)^{b/(b + 1)}} = \frac{h^*}{2b + 2} = t_{h^*}.
\]
Notice that (\ref{finallyabound}) allows us to obtain the following refined version of (\ref{enlb}):
\begin{equation}
\label{eboundbelow}
\J_h(u_h) \ge \frac{\la^{N - 1}m^2}{h} + \la^{N - 1}\frac{h^{2b + 1}}{2b + 1}, \quad \text{ for }h \le t_{h^*}.
\end{equation}
Reasoning as above, by letting $w(\cdot,x_N) \coloneqq v_{\ga_h}(x_N)$, using the fact that $\ga_{t_{h^*}} \ge t_{h^*}$, and (\ref{eboundbelow}) we see that 
\[
\J_{t_{h^*}}(w) = \frac{\la^{N - 1}m^2}{\ga_{t_{h^*}}} + \la^{N - 1}\frac{t_{h^*}^{2b + 1}}{2b + 1} \le \frac{\la^{N - 1}m^2}{t_{h^*}} + \la^{N - 1}\frac{t_{h^*}^{2b + 1}}{2b + 1} \le \min \left \{\J_{t_{h^*}}(u) : u \in \K_{\ga_{t_{h^*}}}\right \}.
\]
In particular, since $w \in \K_{\ga_{t_{h^*}}}$, it must be the case that $w$ is a global minimizer of $\J_h$, a contradiction to \Cref{exofgamma}. This concludes the proof.
\end{proof}

\begin{rmk}
Inequality (\ref{ac}) holds for 
\[
0 < a \le \frac{1}{\left(2B(b)\right)^{1/(2b + 2)}}, \quad c \ge \frac{2}{\left(2B(b)\right)^{1/(2b + 2)}}.
\]
\end{rmk}

\Cref{scalingthm} is then an immediate corollary of \Cref{hc<} and \Cref{scaling2}.

\subsection{Structural properties of global minimizers}
In this subsection we present the proof of \Cref{structure}. For the clarity of presentation, the proof of is divided into a number of separate results. 
\begin{thm}
\label{convofminJ}
Under the assumptions of \Cref{exofhcrit}, if $\theta$ is right-continuous at $h > 0$, there exists $u_h^- \in \K_{\ga_h}$, a global minimizer of the functional $\J_h$, with the property that for every strictly decreasing sequence $\{h_n\}_n$ with $h_n \searrow h$ and for every sequence $\{u_n\}_n$ such that $u_n \in \K_{\ga_{h_n}}$ is a global minimizer of $\J_{h_n}$ for every $n \in \NN$
\[
\arraycolsep=1.4pt\def\arraystretch{1.6}
\begin{array}{rll}
\nabla u_n \to & \nabla u_h^- & \text{ in } L^2(\Omega;\RR^N), \\
u_n \to & u_h^- & \text{ in } H^1_{\loc}(\Omega), \\
u_n \to & u_h^- & \text{ uniformly on compact subsets of } \Omega.
\end{array}
\]
Similarly, if $\theta$ is left-continuous at $h$, there exists a global minimizer of $\J_h$ in $\K_{\ga_h}$, denoted by $u_h^+$, which enjoys analogous properties for strictly increasing sequences converging to $h$.
\end{thm}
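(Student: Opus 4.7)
My plan is to prove the case of a strictly decreasing sequence $h_n \searrow h$ with $\theta$ right-continuous at $h$; the other case follows by symmetric reasoning with reversed monotonicity. The overall strategy is to identify $u_h^-$ as the pointwise supremum of $\{u_n\}$, which exists by \Cref{monotonicity}, and then to verify that this limit is itself a global minimizer of $\J_h$ on $\K_{\ga_h}$ by combining lower semicontinuity with a competitor-construction argument. First I would apply \Cref{monotonicity} with $d = h_m$ and $h = h_n$ for $m > n$ (so that $h_m < h_n$) to obtain $u_n \le u_m$ pointwise, showing the sequence is non-decreasing; and with $d = h$ and $h = h_n$ to obtain $u_n \le u_h$ for any global minimizer $u_h$ of $\J_h$. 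A standard truncation argument (replacing any minimizer $u$ with $\min\{u, m\}$, which lies in the same admissible class, preserves $\{u > 0\}$, and cannot increase the Dirichlet energy) gives $\|u_n\|_\infty \le m$. Consequently $u^* := \sup_n u_n$ is a well-defined pointwise limit in $[0,m]$; an interleaving argument shows $u^*$ is independent of the sequence $\{h_n\}$ and of the specific choice of minimizers $u_n$.

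Next I would upgrade this pointwise convergence. The local Lipschitz bound in \Cref{exofmin}(ii), uniform in $n$ since $h_n$ is bounded and $\|u_n\|_\infty \le m$, combined with monotone pointwise convergence, yields $u_n \to u^*$ locally uniformly on $\Omega$ by Arzel\`a--Ascoli. The uniform energy bound $\J_{h_n}(u_n) \le \J_{h_n}(u_0) \le C$ then gives $\nabla u_n \rightharpoonup \nabla u^*$ weakly in $L^2(\Omega; \RR^N)$. The boundary conditions pass to the limit: since $\theta$ is non-increasing, $\ga_{h_n} \le \ga_h$, so each $u_n$ already vanishes on $\partial \R \times (\ga_h, \infty) \subset \partial \R \times (\ga_{h_n}, \infty)$, while local uniform convergence preserves $u^* = u_0$ on $\R \times \{0\}$; hence $u^* \in \K_{\ga_h}$. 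Lower semicontinuity of the Dirichlet integral gives $\int_\Omega |\nabla u^*|^2\,d\bm{x} \le \liminf_n \int_\Omega |\nabla u_n|^2\,d\bm{x}$, and the monotone convergence $\chi_{\{u_n > 0\}} \nearrow \chi_{\{u^* > 0\}}$ together with the uniform convergence $(h_n - x_N)_+^{2b} \to (h - x_N)_+^{2b}$ on bounded sets yields, by dominated convergence, $\int_\Omega \chi_{\{u_n > 0\}}(h_n - x_N)_+^{2b}\,d\bm{x} \to \int_\Omega \chi_{\{u^* > 0\}}(h - x_N)_+^{2b}\,d\bm{x}$. Thus $\J_h(u^*) \le \liminf_n \J_{h_n}(u_n)$.

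The crucial step is the reverse energy inequality. I would fix any global minimizer $u_h \in \K_{\ga_h}$ of $\J_h$ and construct competitors $v_n \in \K_{\ga_{h_n}}$ with $\J_{h_n}(v_n) \to \J_h(u_h)$. The obstruction is that $\K_{\ga_{h_n}}$ imposes the stricter lateral condition $v_n = 0$ on $\partial \R \times (\ga_{h_n}, \ga_h]$, which $u_h$ need not satisfy. I would set $v_n := \phi_n u_h$, where $\phi_n(\bm{x}) := \min\{\dist(\bm{x}, \partial \R \times [\ga_{h_n}, \ga_h])/\e_n,\, 1\}$ with $\e_n := \sqrt{\ga_h - \ga_{h_n}}$; this is precisely where right-continuity of $\theta$ at $h$ enters, ensuring simultaneously that $\e_n \to 0$ and that $(\ga_h - \ga_{h_n})/\e_n \to 0$. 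A direct calculation shows $\nabla \phi_n$ is supported in a neighborhood of $\partial \R \times [\ga_{h_n}, \ga_h]$ of measure $\sim \e_n(\ga_h - \ga_{h_n})$ with $|\nabla \phi_n| \le 1/\e_n$, so $\int_\Omega u_h^2 |\nabla \phi_n|^2\,d\bm{x} \le Cm^2(\ga_h - \ga_{h_n})/\e_n \to 0$; the modification region has vanishing measure; and hence $\J_{h_n}(v_n) \to \J_h(u_h)$. By minimality of $u_n$, $\limsup_n \J_{h_n}(u_n) \le \lim_n \J_{h_n}(v_n) = \J_h(u_h) = \inf_{\K_{\ga_h}} \J_h$, so $u^*$ is a global minimizer, and I set $u_h^- := u^*$. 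The squeeze $\J_{h_n}(u_n) \to \J_h(u^*)$ combined with the area-term convergence gives $\|\nabla u_n\|_{L^2}^2 \to \|\nabla u^*\|_{L^2}^2$, which together with weak convergence upgrades to strong convergence of gradients in $L^2(\Omega;\RR^N)$; $H^1_{\loc}$ convergence then follows from this together with local uniform convergence. The main obstacle is the competitor construction itself: the cut-off width $\e_n$ must be engineered to balance against the shrinking strip width $\ga_h - \ga_{h_n}$, and it is the \emph{right}-continuity hypothesis on $\theta$ at $h$ that makes this balance possible.
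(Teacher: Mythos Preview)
Your overall architecture is sound and in several respects cleaner than the paper's: defining $u^*=\sup_n u_n$ directly via \Cref{monotonicity}, upgrading to local uniform convergence, and then running the lower-semicontinuity/competitor argument is essentially the paper's Steps~1--4 reorganised. The one place where your proof breaks down is the competitor construction, and it is a genuine computational error rather than a matter of presentation.

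You claim that $\nabla\phi_n$ is supported in a set of measure $\sim \e_n(\ga_h-\ga_{h_n})$. This is false. The set $\partial\R\times[\ga_{h_n},\ga_h]$ is an $(N-1)$-dimensional ``edge'' whose length in the $x_N$-direction is $\ga_h-\ga_{h_n}$, which by your own choice $\e_n=\sqrt{\ga_h-\ga_{h_n}}$ is \emph{much smaller} than $\e_n$. Hence its $\e_n$-neighbourhood is dominated not by the thin slab you envision but by the ``caps'' of radius $\e_n$ around the corners $\partial\R\times\{\ga_{h_n}\}$ and $\partial\R\times\{\ga_h\}$; concretely, in $\Omega$ one has
\[
\mathcal{L}^N\big(\{\nabla\phi_n\neq 0\}\big)\ \sim\ \la^{N-2}\big(\e_n(\ga_h-\ga_{h_n})+\e_n^2\big)\ \sim\ \la^{N-2}\e_n^2.
\]
Plugging this into your estimate with only the crude bound $u_h\le m$ yields
\[
\int_\Omega u_h^2|\nabla\phi_n|^2\,d\bm{x}\ \lesssim\ m^2\e_n^{-2}\cdot \la^{N-2}\e_n^2\ =\ m^2\la^{N-2},
\]
which does \emph{not} tend to zero. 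In fact no choice of $\e_n$ rescues this: if $\e_n\ge \ga_h-\ga_{h_n}$ the caps dominate and the bound is $O(1)$; if $\e_n\le \ga_h-\ga_{h_n}$ the slab dominates and the bound is $\gtrsim(\ga_h-\ga_{h_n})/\e_n\ge 1$. The obstruction is that $u_h$ is not small on $\partial\R\times(0,\ga_h)$ (there the boundary condition is periodic, not Dirichlet), so you cannot trade the $L^\infty$ bound for anything better.

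The paper sidesteps this entirely: rather than cutting off, it rescales vertically, setting $w_n(\bm{x}',x_N)\coloneqq u_h(\bm{x}',\sigma_n x_N)$ with $\sigma_n=\ga_h/\ga_{h_n}\searrow 1$ (this is \Cref{approxinkappa}). This maps $\{x_N=\ga_{h_n}\}$ to $\{x_N=\ga_h\}$, so the lateral Dirichlet condition is inherited exactly, periodicity and the bottom datum are preserved, and a change of variables gives $\J_{h_n}(w_n)\to\J_h(u_h)$ directly. If you replace your cutoff with this rescaling, the rest of your argument goes through unchanged.
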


We begin by proving a preliminary lemma. 
\begin{lem}
\label{approxinkappa}
Under the assumptions of \Cref{exofhcrit}, let $w \in \K_{\ga_h}$ be such that $\J_h(w) < \infty$. Then, if $\theta$ is right-continuous at $h > 0$, for every sequence $h_n \searrow h$ there is a corresponding sequence $\{w_n\}_n$ such that $w_n \in \K_{\ga_{h_n}}$ for every $n \in \NN$ and $\J_{h_n}(w_n) \to \J_h(w)$ as $n \to \infty$.
\end{lem}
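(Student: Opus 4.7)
The plan is to construct $w_n$ by a vertical rescaling of $w$ that transports the Dirichlet trace condition from height $\ga_h$ down to the required height $\ga_{h_n}$. Since $\theta$ is non-increasing and right-continuous at $h$, we have $0 < \ga_{h_n} \le \ga_h$ with $\ga_{h_n} \to \ga_h$. I would define $\psi_n\colon [0,\infty) \to [0,\infty)$ to be the piecewise-linear homeomorphism
\[
\psi_n(t) \coloneqq \left\{\arraycolsep=1.4pt\def\arraystretch{1.6}\begin{array}{ll} \displaystyle \frac{\ga_{h_n}}{\ga_h}\, t & \text{if } 0 \le t \le \ga_h, \\ t - (\ga_h - \ga_{h_n}) & \text{if } t \ge \ga_h, \end{array}\right.
\]
so that $\psi_n(0) = 0$, $\psi_n(\ga_h) = \ga_{h_n}$, $\psi_n \to \mathrm{id}$ uniformly, and $\psi_n' \to 1$ uniformly. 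Then set $w_n(\bm{x}',y) \coloneqq w(\bm{x}', \psi_n^{-1}(y))$.

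The first step is to verify that $w_n \in \K_{\ga_{h_n}}$. The $\bm{x}'$-periodicity and $H^1_{\loc}$ regularity are preserved by the bi-Lipschitz change of variables in the $x_N$-coordinate. At the bottom $y=0$, $w_n(\bm{x}',0) = w(\bm{x}',0) = m$. For $\bm{x}' \in \partial \R$ and $y > \ga_{h_n}$, we have $\psi_n^{-1}(y) > \ga_h$, so $w_n(\bm{x}',y) = 0$ by the trace condition on $w$.

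The second step is to compute $\J_{h_n}(w_n)$ through the change of variables $y = \psi_n(t)$ and pass to the limit. Explicitly, the substitution yields
\[
\J_{h_n}(w_n) = \int_\Omega \left(|\nabla_{\bm{x}'} w|^2 \psi_n'(t) + \frac{|\partial_t w|^2}{\psi_n'(t)} + \chi_{\{w > 0\}}\big(h_n - \psi_n(t)\big)_+^{2b}\psi_n'(t)\right)dt\,d\bm{x}'.
\]
Since $\psi_n'$ is bounded above by $1$ and, for $n$ large, below by $1/2$, and since the support of the area integrand is contained in the fixed bounded set $\Omega \cap \{t \le h_1 + \ga_h\}$, all three integrands admit $L^1$-majorants independent of $n$ (namely $|\nabla w|^2$, $2|\nabla w|^2$, and $h_1^{2b}\chi_{\{w>0\}}\chi_{\{t \le h_1 + \ga_h\}}$). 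Pointwise convergence $\psi_n(t) \to t$, $\psi_n'(t) \to 1$, and $h_n \to h$ together with the dominated convergence theorem then give $\J_{h_n}(w_n) \to \J_h(w)$.

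The main obstacle, which is what rules out a simpler construction, is that a direct cutoff of $w$ near $\partial \R \times (\ga_{h_n}, \ga_h)$ would not suffice: the trace of $w$ on this set need not vanish nor be small, so any cutoff of thickness $\delta_n$ contributes a gradient term of order $\delta_n^{-2}\int_{\text{strip}} w^2$ that resists control as $\delta_n \to 0$. The vertical rescaling $\psi_n$ circumvents this by exploiting the vanishing trace of $w$ already present at height $\ga_h$ and transporting it to $\ga_{h_n}$ without any local modification of $w$.
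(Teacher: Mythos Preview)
Your proof is correct and follows essentially the same approach as the paper: a vertical change of variables that transports the Dirichlet constraint at height $\ga_h$ down to $\ga_{h_n}$, followed by dominated convergence. The only difference is that the paper uses the simpler global linear rescaling $w_n(\bm{x}',x_N) \coloneqq w(\bm{x}',\sigma_n x_N)$ with $\sigma_n = \ga_h/\ga_{h_n}$ in place of your piecewise-linear $\psi_n$; since $\psi_n^{-1}(y) = \sigma_n y$ on $[0,\ga_{h_n}]$ the two constructions agree where it matters, and your additional translated piece above $\ga_{h_n}$ is harmless.
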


\begin{proof}
Set
\[
\sigma_n \coloneqq \frac{\ga_h}{\ga_{h_n}},
\]
and define the rescaled functions $w_n(\bm{x}',x_N) \coloneqq w(\bm{x}',\sigma_n x_N)$. Notice that $w_n \in \K_{\ga_{h_n}}$ and that by a change of variables
\begin{align*}
\int_{\Omega}|\nabla w_n|^2\, d\bm{x} = &\ \int_{\Omega} \left(|\nabla_{\bm{x}'} w(\bm{x}',\sigma_n x_N)|^2 + \sigma_n^2 \partial_{x_N} w(\bm{x}',\sigma_n x_N)^2\right)\, d\bm{x} \\
= &\ \int_{\Omega} \left(|\nabla_{\bm{x}'} w(\bm{x}',z)|^2 + \sigma_n^2 \partial_{x_N} w(\bm{x}',z)^2\right)\sigma_n^{-1}\, d\bm{x}'dz \\
\to &\ \int_{\Omega}|\nabla w(x,z)|^2\, d\bm{x}'dz,
\end{align*}
where in the last step we have used the fact that by assumption $\sigma_n \searrow 1$. Similarly, one can show that
\[
\int_{\Omega} \chi_{\{w_n > 0\}}(h_n - x_N)_+\, d\bm{x} \to \int_{\Omega}\chi_{\{w > 0\}}(h - x_N)_+\, d\bm{x},
\]
and the result follows.
\end{proof}

\begin{proof}[Proof of \Cref{convofminJ}]
We divide the proof into several steps.
\newline
\textbf{Step 1:} 
Assume first that $\theta$ is right-continuos at $h$ and let $\{h_n\}_n$ and $\{u_n\}_n$ be given as in the statement. We begin by showing that there exists a subsequence of $\{u_n\}_n$ that converges to a function $u_h^- \in \K_{\ga_h}$. To this end, let $v \colon \RR^N_+ \to \RR$ be defined by
\[
v(\cdot,x_N) \coloneqq m\left(1 - \frac{x_N}{\ga_{h_1}}\right)_+
\] 
Then $v \in \K_{\ga_{h_n}}$ for every $n \in \NN$ and in particular we have the following chain of inequalities:
\[
\int_{\Omega}|\nabla u_n|^2\, d\bm{x} \le \J_{h_n}(u_n) \le \J_{h_n}(v) \le \J_{h_1}(v) < \infty.
\] 
Hence $\{\nabla u_n\}_n$ is bounded in $L^2(\Omega;\RR^N)$. Moreover, since $u_n - v = 0$ on $\R \times \{0\}$, by Poincar\'{e}'s inequality we obtain
\[
\int_{\Omega_r}|u_n - v|^2\, d\bm{x} \le C(\Omega_r)\int_{\Omega_r}|\nabla u_n - \nabla v|^2\, d\bm{x},
\]
where $\Omega_r \coloneqq \Omega \cap \{x_N < r\}$, with $r > 0$. This shows that $\{u_n\}_n$ is bounded in $H^1(\Omega_r)$ and thus, up to the extraction of a subsequence, $u_n \rightharpoonup u^r$ in $H^1(\Omega_r)$. If we now let $s > r$, eventually extracting a further subsequence, we have that $u_n \rightharpoonup u^r$ in $H^1(\Omega_r)$ and $u_n \rightharpoonup u^s$ in $H^1(\Omega_s)$. By the uniqueness of the weak limit we conclude that
\[
u^r(\bm{x}) = u^s(\bm{x}) \quad \text{ for } \mathcal{L}^N\text{-a.e. } \bm{x} \in \Omega_r. 
\]
By letting $r \nearrow \infty$ and by a diagonal argument, up to the extraction of consecutive subsequences, this defines a function $u_h^-$ such that for some $\{n_k\}_k \subset \NN$
\begin{equation}
\label{convunk}
\arraycolsep=1.4pt\def\arraystretch{1.6}
\begin{array}{rll}
\nabla u_{n_k} \rightharpoonup & \nabla u_h^- & \text{ in } L^2(\Omega;\RR^N), \\
u_{n_k} \to & u_h^- & \text{ in } L^2_{\loc}(\Omega), \\
u_{n_k} \to & u_h^- & \text{ pointwise a.e.\@ in } \Omega, \\
u_{n_k} \to & u_h^- & \text{ in } L^2_{\loc}(\partial \Omega).
\end{array}
\end{equation}
In particular, this shows that $u_h^-$ can be extended to a function in $\K_{\ga_h}$. 
\newline
\textbf{Step 2:} Next, we show that $u_h^-$ is a global minimizer of $\J_h$. We do so by first showing that up to the extraction of a subsequence which we don't relabel, 
\[
\chi_{\{u_{n_k} > 0\}} \stackrel{\ast}{\rightharpoonup} \xi \quad \text{ in }L^{\infty}(\Omega),
\]
where the function $\xi$ satisfies
\begin{equation}
\label{xi>chi}
\xi(\bm{x}) \ge \chi_{\{u_h^- > 0\}}(\bm{x}) \quad \text{ for } \mathcal{L}^N\text{-a.e. } \bm{x} \in \Omega.
\end{equation}
Indeed, arguing as in the proof of Theorem 1.3 in \cite{MR618549}, we observe that for every $K$ compactly contained in $\{u > 0\}$
\[
0 = \int_K \left(\chi_{\{u_{n_k} > 0\}} - 1\right)u_{n_k}\,d\bm{x} \to \int_K(\xi - 1)u_h^-\,d\bm{x}.
\]
Since $u_h^- > 0$ in $K$, then necessarily $\xi(\bm{x}) = 1$ for $\mathcal{L}^N$-a.e. $\bm{x} \in K$ and hence, by exhaustion, in $\{u > 0\}$. To prove that $u_h^-$ is a global minimizer of $\J_h$ in $\K_{\ga_h}$, we fix $r > 0$ and let $w \in \K_{\ga_h}$. If $\J_h(w) = \infty$ there is nothing to do, hence we assume without loss that $\J_h(w) < \infty$ and consider $\{w_n\}_n$ as in \Cref{approxinkappa}. Then we have
\begin{align}
\label{uglobmin}
\int_{\Omega_r}\left(|\nabla u_h^-|^2 + \chi_{\{u_h^- > 0\}}(h - x_N)_+\right)\,d\bm{x} \le &\ \int_{\Omega}\left(|\nabla u_h^-|^2 + \xi(h - x_N)_+\right)\,d\bm{x} \notag \\
\le &\ \liminf_{k \to \infty} \J_{h_{n_k}}(u_{n_k}) \le \lim_{k \to \infty}\J_{h_{n_k}}(w_{n_k}) \\
= &\ \J_h(w). \notag
\end{align}
We then conclude that $\J_h(u) \le \J_h(w)$ for every $w \in \K_{\ga_h}$ by letting $r \nearrow \infty$.
\newline
\textbf{Step 3:} Notice that taking $w = u_h^-$ in (\ref{uglobmin}) yields 
\[
\int_{\Omega_r}\left(|\nabla u_h^-|^2 + \chi_{\{u_h^- > 0\}}(h -x_N)_+\right)\, d\bm{x} \le \liminf_{k \to \infty}\J_{h_{n_k}}(u_{n_k}) \le \limsup_{k \to \infty}\J_{h_{n_k}}(u_{n_k}) \le \J_h(u_h^-).
\]
In turn, by letting $r \nearrow \infty$ we obtain
\begin{equation}
\label{limitenergy}
\J_h(u_h^-) = \lim_{k \to \infty}\J_{h_{n_k}}(u_{n_k}).
\end{equation}
On the other hand, by the lower semicontinuity of the $L^2$-norm and (\ref{xi>chi}) we see that
\[
\int_{\Omega}|\nabla u_h^-|^2\, d\bm{x} \le \liminf_{k \to \infty}\int_{\Omega}|\nabla u_{n_k}|^2\, d\bm{x},
\]
and 
\[
\int_{\Omega}\chi_{\{u_h^- > 0\}}(h - x_N)_+\, d\bm{x} \le \liminf_{k \to \infty}\int_{\Omega}\chi_{\{u_{n_k} > 0\}}(h - x_N)_+\, d\bm{x}.
\]
In view of (\ref{limitenergy}), we notice that the previous two inequalities are necessarily equalities and therefore 
\[
\nabla u_{n_k} \to \nabla u_h^- \quad \text{ in } L^2(\Omega;\RR^N).
\]
We recall that, by \Cref{monotonicity}, $\{u_{n_k}\}_k$ is an increasing sequence of continuous functions with a continuous pointwise limit (see (\ref{convunk})). Hence, by Dini's convergence theorem, the convergence is uniform on compact subsets of $\Omega$. This shows that eventually extracting a subsequence, the sequence $\{u_n\}_n$ converges in the desired fashion to a minimizer of $\J_h$.
\newline
\textbf{Step 4:} Suppose by contradiction that the entire sequence $\{u_n\}_n$ does not converge to $u_h^-$ as in the statement of the theorem, and let $\{u_{n_j}\}_j$ be a subsequence for which this fails. Applying the results of the previous steps to $\{u_{n_j}\}$ we can extract a further subsequence (which we don't relabel) which converges uniformly on compact subsets of $\Omega$ to a function $w \in \K_{\ga_h}$ which is by assumption different form $u_h^-$ and which is also a minimizer of $\J_h$. Consequently, it follows from \Cref{monotonicity} that $u_{n_k} \le w$ and $u_{n_j} \le u_h^-$. Let $x$ and $r$ be such that $B_r(\bm{x})$ is compactly contained in the support of $u_h^-$. Then, passing to the limit as $k \to \infty$ and $j \to \infty$ in the previous inequalities we obtain $u_h^- = w$ in $B_r(\bm{x})$ and in particular that $0 < u(\bm{x}) = w(\bm{x})$. Reasoning as in the proof of \Cref{monotonicity} we obtain that $u = w$ in $\Omega$.

Notice that the same technique can be used to show that $u_h^-$ is independent of the sequences $\{h_n\}_n$ and $\{u_n\}_n$, i.e., it only depends on the type of monotonicity. This concludes the proof of the first part.
\newline
\textbf{Step 5:} Assume that $\theta$ is left-continuous at $h$. Notice that the analogous result to \Cref{approxinkappa} is trivial in this case since the monotonicity assumption on $\theta$ guarantees that $\K_{\ga_h} \subset \K_{\ga_{h_n}}$ for every $n$. Indeed, for every $w \in \K_{\ga_h}$ we can set $w_n \coloneqq w$ and obtain immediately that $\J_h(w_n) \to \J_h(w)$. On the other hand, the additional assumption on $\theta$ is required to conclude from (\ref{convunk}) that $u_h^+$ belongs to the class $\K_{\ga_h}$. The rest follows essentially without changes and therefore we omit the details.
\end{proof}

The following result is an immediate corollary of \Cref{monotonicity} and \Cref{convofminJ}.
\begin{cor}
\label{upm}
Under the assumptions of \Cref{exofhcrit}, if $\theta$ is continuous at $h > 0$, there are two (possibly equal) global minimizers of $\J_h$ in $\K_{\ga_h}$, namely $u_h^-, u_h^+$, such that $u_h^- \le u_h^+$ and if $w$ is another global minimizer then $u_h^- \le w \le u_h^+$. 
\end{cor}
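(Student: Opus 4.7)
The plan is to read this off from \Cref{monotonicity} and \Cref{convofminJ}, which together do all the work. Since $\theta$ is continuous at $h$, it is both right- and left-continuous there, so \Cref{convofminJ} directly provides two global minimizers $u_h^-, u_h^+ \in \K_{\ga_h}$ of $\J_h$: the former is the common limit of every sequence $\{u_n\}_n$ with $u_n$ a global minimizer of $\J_{h_n}$ for some $h_n \searrow h$, and the latter is the analogous common limit for $h_n \nearrow h$.

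To compare $u_h^-$ with $u_h^+$, I would fix sequences $h_n \nearrow h$ and $k_n \searrow h$ with $h_n < h < k_n$ and pick corresponding global minimizers $v_n \in \K_{\ga_{h_n}}$ of $\J_{h_n}$ and $w_n \in \K_{\ga_{k_n}}$ of $\J_{k_n}$. Since $h_n < k_n$, \Cref{monotonicity} yields $w_n \le v_n$ pointwise in $\Omega$. The uniform convergence on compact subsets guaranteed by \Cref{convofminJ} allows me to pass to the limit $n \to \infty$ and conclude $u_h^- \le u_h^+$ in $\Omega$.

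For the squeezing property, let $w \in \K_{\ga_h}$ be any global minimizer of $\J_h$. I would apply \Cref{monotonicity} to the pair $(w, v_n)$, viewed as minimizers at heights $h$ and $h_n < h$, to obtain $w \le v_n$; passing to the limit gives $w \le u_h^+$. Symmetrically, applying \Cref{monotonicity} to $(w_n, w)$ at heights $k_n > h$ gives $w_n \le w$, and letting $n \to \infty$ produces $u_h^- \le w$. Combining these two inequalities gives the required $u_h^- \le w \le u_h^+$.

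There is no genuine obstacle here; the only thing to watch is that every application of \Cref{monotonicity} is made between a pair of minimizers at two \emph{strictly different} heights (so that the monotonicity of $h \mapsto \ga_h$ forces the correct inclusion of admissible classes), and that the convergence coming from \Cref{convofminJ} is strong enough (pointwise, in fact locally uniform) to preserve pointwise inequalities in the limit.
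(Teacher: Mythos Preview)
Your argument is correct and is exactly the ``immediate corollary of \Cref{monotonicity} and \Cref{convofminJ}'' that the paper asserts without further details. You have simply written out the two-line argument the paper leaves implicit: construct $u_h^\pm$ via \Cref{convofminJ}, apply \Cref{monotonicity} at strictly different heights, and pass to the limit using the locally uniform convergence.
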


\begin{thm}[Uniqueness]
\label{unicitamin}
Under the assumptions of \Cref{exofhcrit}, if in addition $\theta$ is continuous, the functional $\J_h$ admits a unique global minimizer in $\K_{\ga_h}$ for all but countably many values of $h$.
\end{thm}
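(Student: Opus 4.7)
The plan is to reduce uniqueness to the elementary fact that a monotone real-valued function has at most countably many discontinuities. By \Cref{upm}, every global minimizer of $\J_h$ in $\K_{\ga_h}$ is sandwiched between $u_h^-$ and $u_h^+$, so uniqueness at the parameter $h$ is equivalent to the identity $u_h^+ \equiv u_h^-$ on $\Omega$. Hence it suffices to show that $u_h^+ = u_h^-$ for all but countably many $h > 0$.

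First, fix a countable exhaustion $\{K_n\}_n$ of $\Omega$ by compact subsets (for instance closed axis-aligned boxes with rational endpoints, compactly contained in $\Omega$), and for each $n$ set
\[
F_n(h) \coloneqq \int_{K_n} u_h^+(\bm{x})\, d\bm{x}.
\]
Since $u_h^+$ is locally Lipschitz in $\Omega$ by \Cref{exofmin}, each $F_n(h)$ is a finite nonnegative real number. Applying \Cref{monotonicity} to the maximal minimizers produced by \Cref{convofminJ}, one has $u_h^+ \le u_d^+$ whenever $0 < d < h$, so each $F_n$ is a non-increasing function of $h$ on $(0,\infty)$. It therefore has at most countably many discontinuity points; let $D_n$ be this countable exceptional set and put $D \coloneqq \bigcup_n D_n$, which remains countable.

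The crux is to show that $u_h^+ = u_h^-$ on $\Omega$ whenever $h \in (0,\infty)\setminus D$. Fix such an $h$ and any index $n$. For an arbitrary sequence $h_k \searrow h$, \Cref{convofminJ} yields $u_{h_k}^+ \to u_h^-$ uniformly on the compact set $K_n$, and these functions are also uniformly bounded there; passing to the limit under the integral gives
\[
F_n(h^+) = \lim_{k\to\infty}\int_{K_n}u_{h_k}^+\, d\bm{x} = \int_{K_n}u_h^-\, d\bm{x}.
\]
Because $h \notin D_n$, the right-limit $F_n(h^+)$ coincides with $F_n(h) = \int_{K_n} u_h^+\, d\bm{x}$. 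Combining these identities yields $\int_{K_n}(u_h^+ - u_h^-)\, d\bm{x} = 0$, and since $u_h^+ \ge u_h^-$ pointwise by \Cref{upm}, we conclude $u_h^+ = u_h^-$ almost everywhere on $K_n$. Exhausting $\Omega$ by $\{K_n\}_n$ and using the continuity of $u_h^\pm$ produces $u_h^+ \equiv u_h^-$ on $\Omega$, as claimed.

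The main technical point is verifying that the right-limit $F_n(h^+)$ can be identified with $\int_{K_n} u_h^-\, d\bm{x}$; this hinges on the uniform-on-compacts convergence from \Cref{convofminJ}, which in turn requires the continuity of $\theta$ assumed in \Cref{structure}. Aside from this, the proof is the standard "countable jump set" argument, and no further obstacle is expected.
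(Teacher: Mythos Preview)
Your proof is correct and takes a genuinely different route from the paper's. The paper decomposes the non-uniqueness set $\Lambda$ into pieces
\[
\Lambda_{j,n} = \left\{h \in (1/j,j) : \sup_{B_j}|u_h^+ - u_h^-| \ge 1/n\right\},
\]
where $B_j$ is a fixed ball in the positivity set of a reference minimizer $u_j$, and then argues by contradiction that each $\Lambda_{j,n}$ is in fact \emph{finite}: any infinite subset would contain a strictly monotone sequence $h_i \to h$, and \Cref{convofminJ} forces both $u_{h_i}^+$ and $u_{h_i}^-$ to converge uniformly on $\overline{B_j}$ to the \emph{same} limit, contradicting $\sup_{B_j}|u_{h_i}^+ - u_{h_i}^-| \ge 1/n$.

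Your argument packages the same ingredients (\Cref{monotonicity}, \Cref{convofminJ}, \Cref{upm}) into the monotone scalar functions $F_n(h)=\int_{K_n}u_h^+$ and appeals to the classical fact that a monotone real function has at most countably many discontinuities; the identification $F_n(h^+)=\int_{K_n}u_h^-$ via \Cref{convofminJ} then pins down the jump as exactly $\int_{K_n}(u_h^+-u_h^-)$. This is slightly more streamlined and avoids the contradiction argument and the auxiliary balls $B_j$. What the paper's approach buys is the marginally sharper conclusion that each $\Lambda_{j,n}$ is finite, and it uses uniform convergence only on small balls rather than on arbitrary compacts; but neither refinement is needed for the statement at hand.
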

\begin{proof}
Let
\[
\Lambda \coloneqq \{h \in \RR_+ : \text{the minimization problem for } \J_h \text{ in } \K_{\ga_h} \text{ has at least two distinct solutions}\},
\]
for every integer $j \ge 2$ let $u_j \in \K_{\ga_j}$ be a global minimizer of $\J_j$, and denote by $B_j$ a ball compactly contained in the set $\{\bm{x} \in \Omega : u_j(\bm{x}) > 0\}$. Furthermore, for every $n \in \NN$ define the sets
\[
\Lambda_{j,n} \coloneqq \left\{h \in \left(\frac{1}{j},j\right) : \sup\left\{|u_h^+(\bm{x}) - u_h^-(\bm{x})| : \bm{x} \in B_j\right\} \ge \frac{1}{n}\right\},
\]
where the functions $u_h^-,u_h^+$ are given as in \Cref{convofminJ}. We claim that
\[
\Lambda = \bigcup_{j = 2}^{\infty} \bigcup_{n = 1}^{\infty}\Lambda_{j,n}.
\]
Indeed, if $h \in \Lambda$ and $j$ is such that $1/j < h < j$, to prove the claim it is enough to show that $h \in \Lambda_{j,n}$ for some $n$. Assume by contradiction that this is not the case; then it follows from \Cref{monotonicity} that for every $\bm{x} \in B_j$
\[
u_h^-(\bm{x}) = u_h^+(\bm{x}) \ge u_j(\bm{x}) > 0,
\] 
and in turn, reasoning as in the proof of \Cref{monotonicity}, we obtain that $u_h^-$ and $u_h^+$ must coincide in $\Omega$. In view of \Cref{upm}, this contradicts the assumption that $h \in \Lambda$.

Assume that $\Lambda_{j,n}$ has a countable subset. Then we can find a sequence $\{h_i\}_i \subset \Lambda_{j,n}$ and $h \in [1/j, j]$ such that $\{h_i\}_i$ converges strictly monotonically to $h$. By \Cref{convofminJ}, there exists a function $u \in \K_{\ga_h}$ such that $u_{h_i}^- , u_{h_i}^+ \to u$ uniformly in the compact set $\overline{B_j}$. In turn, for $i$ large enough we have that
\[
|u_{h_i}^+(\bm{x}) - u_{h_i}^-(\bm{x})| \le |u_{h_i}^+(\bm{x}) - u(\bm{x})| + |u(\bm{x}) - u_{h_i}^-(\bm{x})| < \frac{1}{n}
\]
for all $\bm{x} \in B_j$, a contradiction with the definition of $\Lambda_{j,n}$. Hence, we have shown that the sets $\Lambda_{j,n}$ are finite for every $j \ge 2$ and $n \in \NN$. This concludes the proof.
\end{proof}

Having established the convergence of monotone sequences of minimizers in \Cref{convofminJ}, we now investigate the convergence of the associated free boundaries. Our proof is inspired by standard techniques which are more commonly used in the study of blow-up limits (see, for example, Section 4.7 in \cite{MR618549}).

\begin{thm}
\label{convFB}
Under the assumptions of \Cref{exofhcrit}, if $\theta$ is continuous at $h > 0$, let $\{h_n\}_n \subset (0,\infty)$ be a monotone sequence that converges to $h$. For every $n \in \NN$, let $u_n$ be a global minimizer of $\J_{h_n}$ in $\K_{\ga_{h_n}}$ and consider $u_h^+,u_h^-$ as in \Cref{upm}. Then the following statements hold:
\begin{itemize}
\item[$(i)$] if $h_n \searrow h$ then $\partial\{u_n > 0\} \to \partial\{u^-_h > 0\}$ in Hausdorff distance locally in $\Omega$;
\item[$(ii)$] if $h_n \nearrow h$ then $\partial\{u_n > 0\} \to \partial\{u^+_h > 0\}$ in Hausdorff distance locally in $\R \times (0,h)$;
\item[$(iii)$] if $h_n \searrow h$  then $\chi_{\{u_n > 0\}} \to \chi_{\{u_h^- > 0\}}$ in $L^1_{\loc}(\R \times (0,h))$;
\item[$(iv)$] if $h_n \nearrow h$  then $\chi_{\{u_n > 0\}} \to \chi_{\{u_h^+ > 0\}}$ in $L^1_{\loc}(\R \times (0,h))$.
\end{itemize}
\end{thm}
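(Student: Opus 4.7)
The plan is to combine the monotonicity from \Cref{monotonicity} with the locally uniform convergence in \Cref{convofminJ}, and to handle the positivity of $u_n$ on the zero set of the limit via the non-degeneracy estimate in \Cref{competition2}. Writing $u^\ast$ for the monotone limit, one has $u^\ast = u_h^-$ in parts $(i), (iii)$ (for which $u_n \le u_h^-$ and $\{u_n > 0\} \subset \{u_h^- > 0\}$) and $u^\ast = u_h^+$ in parts $(ii), (iv)$ (for which $u_n \ge u_h^+$ and $\{u_n > 0\} \supset \{u_h^+ > 0\}$), and in both settings $u_n \to u^\ast$ uniformly on every compact subset of $\Omega$.

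For the Hausdorff convergence in $(i)$ and $(ii)$, I would fix a compact set $K$ (in $\Omega$ for $(i)$, in $\R \times (0, h)$ for $(ii)$) and verify the two one-sided statements separately. If, along a subsequence, $\bm{x}_n \in \partial\{u_n > 0\} \cap K$ satisfy $\dist(\bm{x}_n, \partial\{u^\ast > 0\}) \ge \e$ and $\bm{x}_n \to \bm{x}$, then $B_\e(\bm{x})$ lies either in $\{u^\ast > 0\}$, which is immediately excluded by uniform convergence, or in $\{u^\ast = 0\}$. The latter case is trivial in $(i)$ because $u_n \le u_h^- \equiv 0$ on $B_\e(\bm{x})$; in $(ii)$ I would instead apply \Cref{competition2} on a small ball around $\bm{x}$, using uniform convergence $u_n \to 0$ on $\partial B_r(\bm{x})$ together with the lower bound on $(h_n - x_N - r/2)_+^b$ coming from $x_N < h$ to conclude $u_n \equiv 0$ on $B_{r/2}(\bm{x})$ eventually. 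This is the precise point at which the restriction $K \subset \R \times (0, h)$ becomes necessary. Conversely, given $\bm{y} \in \partial\{u^\ast > 0\} \cap K$ and $\rho > 0$, I would pick $\bm{z} \in B_\rho(\bm{y})$ with $u^\ast(\bm{z}) > 0$, so that $u_n(\bm{z}) > 0$ eventually (directly from $u_n \ge u_h^+$ in $(ii)$, and from uniform convergence in $(i)$), and then produce a point near $\bm{y}$ where $u_n$ vanishes: in $(i)$ this is $\bm{y}$ itself, since $u_n(\bm{y}) \le u_h^-(\bm{y}) = 0$, while in $(ii)$ I would appeal to the standard density estimate $|B_\rho(\bm{y}) \cap \{u_h^+ = 0\}| \ge c \rho^N$ (a consequence of non-degeneracy) to locate a ball $B_{c\rho}(\bm{w}_\rho) \subset \{u_h^+ = 0\}$ near $\bm{y}$ and apply \Cref{competition2} again to force $u_n \equiv 0$ on a concentric smaller ball. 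A continuity argument along the segment from $\bm{z}$ to this zero set then produces $\bm{y}_n \in \partial\{u_n > 0\}$ within $O(\rho)$ of $\bm{y}$.

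For $(iii)$ and $(iv)$, since $\chi_{\{u_n > 0\}} \le 1$, the dominated convergence theorem reduces the claim to pointwise convergence of $\chi_{\{u_n > 0\}}$ to $\chi_{\{u^\ast > 0\}}$ $\LN$-almost everywhere in $\R \times (0, h)$. At every $\bm{x}$ with $u^\ast(\bm{x}) > 0$ this is immediate from $u_n(\bm{x}) \to u^\ast(\bm{x})$. At every $\bm{x}$ in the interior of $\{u^\ast = 0\}$, case $(iii)$ is immediate from $u_n \le u_h^-$, while case $(iv)$ again uses the non-degeneracy argument from the previous paragraph to get $u_n \equiv 0$ on a small ball around $\bm{x}$ for large $n$. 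The remaining exceptional set $\partial\{u^\ast > 0\} \cap (\R \times (0, h))$ has zero $\LN$-measure by the regularity statement in \Cref{exofmin}$(v)$, which closes the argument.

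The principal obstacle is concentrated in parts $(ii)$ and $(iv)$: the one-sided bound $u_n \ge u_h^+$ gives no information on where $u_h^+$ vanishes, so non-degeneracy from \Cref{competition2} is indispensable both for excluding stray limit points of $\partial\{u_n > 0\}$ inside $\{u_h^+ = 0\}^\circ$ and for producing honest zeros of $u_n$ next to every free boundary point of $u_h^+$. The fact that $(h_n - x_N - kr)_+^b$ must remain bounded away from zero forces the hypothesis $x_N < h$, which is exactly why the Hausdorff and $L^1_{\loc}$ conclusions in $(ii)$ and $(iv)$ are formulated only on $\R \times (0, h)$.
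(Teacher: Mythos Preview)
Your proposal is correct and closely parallels the paper's argument for $(i)$ and $(ii)$: both rely on the monotone ordering from \Cref{monotonicity}, the locally uniform convergence from \Cref{convofminJ}, and, in the ``hard'' direction of $(ii)$, the non-degeneracy estimate \Cref{competition2}. One small difference: for the converse inclusion in $(ii)$ (producing a zero of $u_n$ near a given $\bm{y} \in \partial\{u_h^+ > 0\}$), the paper avoids the density-estimate detour by arguing that if $\partial\{u_n > 0\}$ misses $B_r(\bm{y})$ for all large $n$, then (since $u_n \ge u_h^+$ forces $u_n > 0$ somewhere in $B_r(\bm{y})$) each $u_n$ is positive, hence harmonic, on $B_r(\bm{y})$; the uniform limit $u_h^+$ is then harmonic there, and the strong maximum principle rules out $\bm{y}$ as a free boundary point. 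This is slightly cleaner than locating a ball $B_{c\rho}(\bm{w}_\rho) \subset \{u_h^+ = 0\}$, which strictly speaking requires not only the density lower bound you cite but also that $\partial\{u_h^+ > 0\}$ is $\LN$-null.

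For $(iii)$ and $(iv)$ your route is genuinely different from the paper's, and more direct. The paper first invokes the Hausdorff convergence just established to trap $\partial\{u_n > 0\} \cap K$ inside an $\eta$-tubular neighborhood of $\partial\{u^\ast > 0\}$, then appeals to the locally finite $\mathcal{H}^{N-1}$-measure of the limit free boundary (Theorem~4.5 in \cite{MR618549}) to bound the measure of that neighborhood by $O(\eta)$ and lets $\eta \to 0^+$. You instead establish pointwise convergence of the characteristic functions on $\{u^\ast > 0\} \cup \{u^\ast = 0\}^\circ$, invoke \Cref{exofmin}$(v)$ to see that the remaining set $\partial\{u^\ast > 0\} \cap (\R \times (0,h))$ is $\LN$-null, and finish by dominated convergence. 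Your argument bypasses the perimeter estimate and tubular-neighborhood computation entirely, at the cost of using the stronger regularity input $(v)$; the paper's approach needs only locally finite perimeter of $\{u^\ast > 0\}$, which holds under weaker hypotheses on $Q$.
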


\begin{proof} $(i)$ Let $h_n \searrow h > 0$ and consider a ball $B_r(\bm{x}) \subset \Omega$ such that $B_r(\bm{x}) \cap \partial\{u_h^- > 0\} = \emptyset$. Then either $u_h^- \equiv 0$ in $B_r(\bm{x})$ or $u_h^- > 0$ in $B_r(\bm{x})$. By \Cref{monotonicity} we have that for every $n \in \NN$ $\{u_n > 0\} \subset \{u_h^- > 0\}$; thus if $u_h^- \equiv 0$ in $B_r(\bm{x})$ so does $u_n$ for every $n \in \NN$. In particular, this implies that
\begin{equation}
\label{lochausd1}
B_{r/2}(\bm{x}) \cap \partial\{u_n > 0\} = \emptyset.
\end{equation}
On the other hand, if $u_h^- > 0$ in $B_r(\bm{x})$, since by \Cref{convofminJ} we have that $\{u_n\}_n$ converges uniformly to $u_h^-$ in $B_{r/2}(\bm{x})$, then for $n$ sufficiently large
\[
u_n(\bm{x}) \ge \frac{1}{2} \min\left\{u_h^-(\bm{y}) : \bm{y} \in \overline{B_{r/2}(\bm{x})}\right\} > 0
\] 
for every $\bm{x} \in B_{r/2}(\bm{x})$ and hence (\ref{lochausd1}) is satisfied.

Conversely, if $B_r(\bm{x}) \cap \partial\{u_n > 0\} = \emptyset$ then for all $n$ sufficiently large we have that either $u_n > 0$ in $B_r(\bm{x})$ or $u_n = 0$ in $B_r(\bm{x})$. Assume first that $u_{m} > 0$ in $B_r(\bm{x})$ for some $m \in \NN$. Then, by \Cref{monotonicity}, $u_n > 0$ in $B_r(\bm{x})$ for every $n \ge m$ and therefore $u_h^-$ is harmonic in $B_{r/2}(\bm{x})$ being the uniform limit of harmonic functions. Consequently, either $u_h^- > 0$ in $B_{r/2}(\bm{x})$ or $u_h^- = 0$ in $B_{r/2}(\bm{x})$. In both cases 
\begin{equation}
\label{lochausd2}
B_{r/2}(\bm{x}) \cap \partial\{u_h^- > 0\} = \emptyset.
\end{equation} 
On the other hand, if $u_n \equiv 0$ in $B_{r/2}(\bm{x})$ for every $n \in \NN$ then also $u_h^- \equiv 0$ in $B_{r/2}(\bm{x})$. This shows that (\ref{lochausd2}) is also satisfied in case. By a standard compactness argument one can show that $\partial\{u_n > 0\} \to \partial\{u_h^- > 0\}$ in Hausdorff distance locally in $\Omega$.

$(ii)$ Let $h_n \nearrow h$ and consider a ball $B_r(\bm{x}) \subset \R \times (0,h)$ such that $B_r(\bm{x}) \cap \partial\{u_h^+ > 0\} = \emptyset$. As before, either $u_h^+ \equiv 0$ in $B_r(\bm{x})$ or $u_h^+ > 0$ in $B_r(\bm{x})$. If $u_h^+ > 0$ in $B_r(\bm{x})$, by \Cref{monotonicity}, $u_n > 0$ in $B_r(\bm{x})$ for every $n \in \NN$. Therefore (\ref{lochausd2}) holds.
On the other hand, if $u^+ \equiv 0$, for every $\delta > 0$ we can find $m$ such that $u_n \le \delta$ in $B_{3r/4}(\bm{x})$ for every $n \ge m$. Hence, for $\delta = \delta(r)$ sufficiently small and $n \ge m$,
\[
\frac{4}{3r}\fint_{B_{3r/4}(\bm{x})} u_n\,d\mathcal{H}^{N - 1} \le \frac{4\delta}{3r} \le C(N, 2/3)\left(h - x_N - \frac{2}{3}\frac{3}{4}r\right)^b
\]
Then we can conclude from \Cref{competition2} that $u_n \equiv 0$ in $B_{r/2}(\bm{x})$, proving that (\ref{lochausd1}) holds. The rest of the proof follows as in the previous case, therefore we omit the details.

$(iii)$ Let $h_n \searrow h > 0$ and let $K$ be a compact subset of $\R \times (0,h)$. If $\dist(K,\partial \{u_h^- > 0\}) > 0$ then either $u_h^- \equiv 0$ in $K$ or $u_h^- > 0$ in $K$. Reasoning as the proof of $(i)$, we can conclude that either $u_n \equiv 0$ in $K$ for every $n$ or $u_n > 0$ in $K$ for $n$ sufficiently large; hence in this case there is nothing to prove. Therefore, we can assume that $K \cap \partial \{u_h^- > 0\} \neq \emptyset$. By $(i)$, for every $0 < \eta < d_K \coloneqq \dist(K, \partial (\R \times (0,h)))$ we can find $m = m(\eta,K)$ such that if $n \ge m$ then 
\[
\partial \{u_n > 0\}\cap K \subset \mathcal{N}_{\eta}(\partial \{u_h^- > 0\}),
\]
where for any set $A \subset \Omega$, $\mathcal{N}_{\eta}(A)$ represents the tubular neighborhood of $A$ of width $\eta$, i.e.,
\[
\mathcal{N}_{\eta}(A) \coloneqq \{\bm{x} \in \Omega : \dist(\bm{x}, A) < \eta\}.
\]
Observe that by \Cref{competition2}, for every ball $B_r(\bm{x}) \subset K$ with center on $\partial \{u_h^- > 0\}$ 
\[
\frac{1}{r}\fint_{\partial B_r(\bm{x})}u_h^-\,d\mathcal{H}^{N - 1} \ge C(N, 1/2)(h - x_N - r/2)^b > C(N, 1/2)(d_K)^b.
\]
Similarly, by Lemma 3.2 in \cite{MR618549} (see also Theorem 3.1 in \cite{MR2915865}), there is a constant $C_{\max} = C_{\max}(N) > 0$ such that 
\[
\frac{1}{r}\fint_{\partial B_r(\bm{x})}u_h^-\,d\mathcal{H}^{N - 1} \le C_{\max}(h - x_N + r)^b < C_{\max}(2h)^b.
\]
Hence we are in a position to apply Theorem 4.5 in \cite{MR618549} to conclude that 
\[
\mathcal{H}^{N - 1}(\partial \{u_h^- > 0\} \cap K) < \infty.
\]
Since $\chi_{\{u_n > 0\}} \to \chi_{\{u_h^- > 0\}}$ in $L^1(K \setminus \mathcal{N}_{\eta}(\partial \{u_h^- > 0\}))$ and since
\[
\mathcal{L}^N(\mathcal{N}_{\eta}(\partial \{u_h^- > 0\}) \cap K) \le 2 \eta \mathcal{H}^{N - 1}(\partial \{u_h^- > 0\} \cap K),
\]
letting $\eta \to 0^+$ in the previous estimate concludes the proof.

The proof of $(iv)$ is almost identical, thus we omit the details. 
\end{proof}

The following result is adapted from Theorem 5.10 in \cite{MR2915865}.
\begin{thm}
\label{symthm}
Let $u_h^+, u_h^-$ be as in \Cref{upm}. Then $u_h^+, u_h^-$ are symmetric with respect to the coordinate hyperplanes $\{x_i = 0\}$ and the maps 
\[
x_i \in [0,\la/2] \mapsto u_h^+(\bm{x}), \quad x_i \in [0,\la/2] \mapsto u_h^-(\bm{x})
\] 
are decreasing for $i = 1, \dots, N-1$.
\end{thm}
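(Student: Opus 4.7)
The plan is to show that $u_h^+$ coincides with its symmetric decreasing Steiner rearrangement in each variable $x_i$ separately, for $i = 1, \ldots, N-1$; the argument for $u_h^-$ will be identical with the inequality between a minimizer and its rearrangement reversed.

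First, I would fix $i$ and, for any non-negative $u \in \K_{\ga_h}$, define $u^{*i}$ by replacing, on each slice $\{x_j = c_j : j \neq i\}$, the one-dimensional profile $t \mapsto u(c_1, \ldots, c_{i-1}, t, c_{i+1}, \ldots, c_N)$ with its symmetric decreasing rearrangement on $(-\la/2, \la/2)$, and extending by $\la$-periodicity in $x_i$ (consistent because $u^{*i}(\cdot, -\la/2, \cdot) = u^{*i}(\cdot, \la/2, \cdot)$ by symmetry). The main step is then to check that $u^{*i} \in \K_{\ga_h}$: the datum $u_0 \equiv m$ on $\R \times \{0\}$ is a constant, which rearrangement preserves; on $\partial \R \times (\ga_h, \infty)$ the Dirichlet condition $u = 0$ forces the essential infimum of $u$ on each $x_i$-slice with $x_N > \ga_h$ to be zero, so $u^{*i} = 0$ at $x_i = \pm \la/2$ there, while on faces of $\partial \R$ transverse to $x_i$ the function $u$ already vanishes identically for $x_N > \ga_h$ and the rearrangement in $x_i$ does not alter this. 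Once $u^{*i} \in \K_{\ga_h}$ is in hand, the classical P\'olya--Szeg\H{o} inequality for Steiner symmetrization gives $\int_\Omega |\nabla u^{*i}|^2 \, d\bm{x} \le \int_\Omega |\nabla u|^2 \, d\bm{x}$, while the area term is invariant because on each horizontal slice $\{x_N = \text{const}\}$ rearrangement in $x_i$ preserves the measure of the positivity set and the weight $(h - x_N)_+^{2b}$ depends only on $x_N$. Combining these, $\J_h(u^{*i}) \le \J_h(u)$.

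Applying the previous step to $u = u_h^+$ shows that $(u_h^+)^{*i}$ is itself a global minimizer of $\J_h$ in $\K_{\ga_h}$. The extremality statement in \Cref{upm} then gives $(u_h^+)^{*i} \le u_h^+$ pointwise, and because rearrangement preserves the measure of the positivity set on each horizontal slice, one obtains $\chi_{\{(u_h^+)^{*i} > 0\}} = \chi_{\{u_h^+ > 0\}}$ almost everywhere. Setting $v \coloneqq \tfrac{1}{2}u_h^+ + \tfrac{1}{2}(u_h^+)^{*i}$, one has $v \in \K_{\ga_h}$ and $\chi_{\{v > 0\}} = \chi_{\{u_h^+ > 0\}}$ a.e., so by the strict convexity argument from Step 2 of the proof of \Cref{monotonicity},
$$\J_h(v) \le \tfrac{1}{2}\J_h(u_h^+) + \tfrac{1}{2}\J_h\bigl((u_h^+)^{*i}\bigr) = \J_h(u_h^+),$$
with strict inequality unless $\nabla u_h^+ = \nabla (u_h^+)^{*i}$ almost everywhere. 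The minimality of $u_h^+$ forces this gradient equality, and since the two functions agree on $\R \times \{0\}$ and $\Omega$ is connected, I conclude $u_h^+ = (u_h^+)^{*i}$ throughout $\Omega$. Hence $u_h^+$ is symmetric about $\{x_i = 0\}$ and non-increasing in $x_i \in [0, \la/2]$; as $i$ was arbitrary, the result for $u_h^+$ follows, and the same argument (using $u_h^- \le (u_h^-)^{*i}$ from extremality) handles $u_h^-$.

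The main obstacle I expect is the careful verification that Steiner symmetrization is compatible with the mixed periodic-Dirichlet structure of $\K_{\ga_h}$: specifically, that the trace condition $u^{*i} = 0$ on $\partial \R \times (\ga_h, \infty)$ is inherited after rearrangement in each direction $x_i$, and that the rearranged function extends as a bona fide $\la$-periodic element of $H^1_{\per}(\Omega)$. Once those compatibility issues are settled, the symmetrization-and-strict-convexity scheme sketched above runs through with no further difficulty.
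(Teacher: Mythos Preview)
Your argument is correct and takes a genuinely different route from the paper's proof. The paper first restricts to values of $h$ outside the countable exceptional set $\Lambda$ of \Cref{unicitamin}, where the minimizer is unique; for such $h$ it argues that the even reflection and the Steiner symmetrization of $u_h$ are again minimizers, hence equal to $u_h$ by uniqueness. It then recovers the result for $h \in \Lambda$ by approximating with $h_n \notin \Lambda$ and invoking the convergence in \Cref{convofminJ}.

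Your approach bypasses the uniqueness theorem entirely. You work directly with $u_h^{\pm}$ and use only the extremality property from \Cref{upm}: since $(u_h^+)^{*i}$ is itself a minimizer, the inequality $(u_h^+)^{*i} \le u_h^+$ follows, equimeasurability forces the positivity sets to coincide, and the strict-convexity argument borrowed from Step~2 of \Cref{monotonicity} upgrades the inequality to equality. This is more self-contained (it does not need \Cref{unicitamin} or the limiting argument of \Cref{convofminJ}), and it treats $u_h^+$ and $u_h^-$ symmetrically via the two halves of the sandwich in \Cref{upm}. The paper's route, by contrast, establishes the statement for \emph{every} minimizer at generic $h$ (not just the extremal ones), at the cost of relying on the uniqueness and convergence machinery. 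Your verification that Steiner symmetrization preserves membership in $\K_{\ga_h}$ is the one place requiring care, but the sketch you give is accurate.
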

\begin{proof}
\textbf{Step 1:} Let $h \in \RR^+ \setminus \Lambda$ where $\Lambda$ is defined as in \Cref{unicitamin} and let $u_h$ be the unique global minimizer of $\J_h$ in  $\K_{\ga_h}$. For $i = 1, \dots , N - 1$, let $w_i$ be the function obtain by applying to $u_h$ an even reflection about the hyperplane $\{x_i = 0\}$, i.e.
\[
w_i(\bm{x}) \coloneqq 
\left\{
\arraycolsep=1.4pt\def\arraystretch{1.6}
\begin{array}{ll}
u_h(-x_1,x_2,\dots,x_N) & \text{ if } i = 1, \\
u_h(x_1,\dots,-x_i,\dots,x_N) & \text{ if } i \ge 2.
\end{array}
\right.
\]
Notice that $w_i \in \K_{\ga_h}$ and $\J_h(w) = \J_h(u_h)$. Thus, since by assumption $\J_h$ has exactly one global minimizer in $\K_{\ga_h}$, it must be the case that $u_h = w_i$ for every $i$. This proves that $u_h$ is symmetric with respect to the hyperplanes $\{x_i = 0\}$ for $i = 1, \dots, N - 1$, and in particular the support of $u_h$ in $\Omega$ coincides with its Steiner symmetrizations with the respect to the same hyperplanes. Let $u_h^*$ be the symmetric decreasing rearrangement of $u_h$ with respect to the variables $x_1, \dots, x_{N - 1}$ (see Chapter 2 in \cite{MR810619}; see also Definition 7.1 in \cite{MR1009785}). Then $u_h^* \in \K_{\ga_h}$ and by the P\'olya-Szeg\"o inequality (see Corollary 2.14 \cite{MR810619}; see also Theorem 7.1 in \cite{MR1009785}), together with Tonelli's theorem and Lebesgue's monotone convergence theorem, we obtain
\[
\int_{\Omega}|\nabla u_h^*|^2\,d\bm{x} \le \int_{\Omega}|\nabla u_h|^2\,d\bm{x}.
\]
Furthermore, the definition of $u_h^*$ implies that for $\mathcal{L}^1$-a.e.\@ $x_N \in \RR_+$ 
\[
\int_{\R}\chi_{\{u_h^* > 0\}}(\bm{x}',x_N)\,d\bm{x}' = \int_{\R}\chi_{\{u_h > 0\}}(\bm{x}',x_N)\,d\bm{x}', 
\]
and thus, again by Tonelli's theorem,
\begin{align*}
\int_{\Omega}\chi_{\{u_h^* > 0\}}(h - x_N)_+^{2b}\,d\bm{x} = &\ \int_0^h(h - x_N)_+^{2b}\int_{\R}\chi_{\{u_h^* > 0\}}(\bm{x}',x_n)\,d\bm{x}'\,dx_N \\
= &\ \int_0^h(h - x_N)_+^{2b}\int_{\R}\chi_{\{u_h > 0\}}(\bm{x}',x_n)\,d\bm{x}'\,dx_N \\
= &\ \int_{\Omega}\chi_{\{u_h > 0\}}(h - x_N)_+^{2b}\,d\bm{x}.
\end{align*}
Consequently, $\J_h(u_h^*) \le \J_h(u_h)$, which in turn gives that $u_h \equiv u_h^*$.
\newline
\textbf{Step 2:} If $h \in \Lambda$, consider a sequence $\{h_n\}_n \subset \RR \setminus \Lambda$ such that $h_n \nearrow h$ and let $u_{h_n}$ be the unique minimizer of $\J_{h_n}$ in $\K_{\ga_{h_n}}$. Then, $u_{h_n} \equiv u_{h_n}^*$ and by \Cref{convofminJ} it follows that $u_h^+$ has all the desired properties. The analogous result for $u_h^-$ follows by considering a sequence $\{h_n\}_n \subset \RR \setminus \Lambda$ such that $h_n \searrow h$.
\end{proof}

\begin{rmk}
\label{graph}
Let $u_h \in \K_{\ga_h}$ be a global minimizer of $\J_h$, assume that the map
\[
x_i \in [0,\la/2] \mapsto u_h(\bm{x})
\] 
is decreasing for some $i \in \{1, \dots, N-1\}$. Then the free boundary of $u_h$ in $(0,\la/2)^{N - 1} \times \RR_+$ can be described by the graph of a function 
\[
x_i = g_i(\hat{\bm{x}}_i),
\]
where the vector $\hat{\bm{x}}_i$ is obtained from $\bm{x}$ by removing the entry corresponding to $x_i$. Indeed, it is enough to define 
\[
g_i(\hat{\bm{x}}_i) \coloneqq \sup\{x_i : u_h(\bm{x}) > 0\}.
\]
\end{rmk}

Notice that \Cref{structure} follows directly from \Cref{convofminJ}, \Cref{upm}, \Cref{unicitamin}, and \Cref{symthm}.

\section{Comments}
It is important to observe that \Cref{exofhcrit} implies that the critical height $h_{\cri}$ is the only value of $h$ for which the free boundaries of global minimizers of $\J_h$ can touch the hyperplane $\{x_N = h\}$ while having support contained in $\{x_N \le h\}$. As previously observed in \cite{MR2915865} in dimension $N = 2$, it follows from \Cref{convofminJ} and \Cref{competition2} that the support of $u_{h_{\cri}}^+$ cannot be strictly contained in $\{y < h_{\cri}\}$, while the support of $u_{h_{\cri}}^-$ cannot cross the line $\{y = h_{\cri}\}$. In turn, a necessary condition for the existence of a minimizer with the desired properties is that $u_{h_{\cri}}^+ \equiv u_{h_{\cri}}^-$. As previously remarked in the introduction, our interest in the matter is due to the fact that in view of the results of Theorem 5.11 in \cite{MR2915865}, such a minimizer would behave as a Stokes wave locally in $\Omega$. 

\section*{Acknowledgements}
This paper is part of the first author's Ph.\@ D.\@ thesis at Carnegie Mellon University. The authors acknowledge the Center for Nonlinear Analysis (NSF PIRE Grant No.\@ OISE-0967140) where part of this work was carried out. The research of G.\@ Gravina and G.\@ Leoni was partially funded by the National Science Foundation under Grants No.\@ DMS-1412095 and DMS-1714098. G.\@ Gravina also acknowledges the support of the research support programs of Charles University: PRIMUS/19/SCI/01 and UNCE/SCI/023. G.\@ Leoni would like to thank Ovidiu Savin and Eugen Varvaruca for their helpful insights. The authors would also like to thank Luis Caffarelli, Ming Chen, Craig Evans and Ian Tice for useful conversations on the subject of this paper.

\bibliographystyle{alpha}
\bibliography{waterwaves}
\addcontentsline{toc}{section}{References}

\end{document}